\documentclass{article}
\usepackage{amsthm, appendix}
\theoremstyle{plain}
\usepackage{tikz}
\usetikzlibrary{calc}
\newtheorem{theorem}{Theorem}[section]
\usepackage{pgfplots}
\pgfplotsset{compat=1.18}
\usepgfplotslibrary{groupplots}
\usepackage{graphicx} % Required for inserting images
\usepackage{subfigure}
\usepackage{subcaption}
\usepackage{graphicx} % for resizebox
\usepackage{algpseudocode}
\usepackage{amsmath,amsthm,amssymb,amsfonts,mathtools}
\usepackage[margin=1in]{geometry}
\usepackage{enumitem}
\usepackage{bbm}
\usepackage[numbers]{natbib}

\theoremstyle{plain}
\newtheorem{lemma}[theorem]{Lemma}
\newtheorem{proposition}[theorem]{Proposition}

\newtheorem{definition}[theorem]{Definition}
\newtheorem{remark}[theorem]{Remark}
\newtheorem{assumption}[theorem]{Assumption}
\theoremstyle{plain}

\newcommand{\wrap}{\operatorname{wrap}_\pi}
\newcommand{\R}{\mathbb{R}}

\newcommand{\E}{\mathbb{E}}
\newcommand{\spm}{S_\pm^{(t)}}
\newcommand{\spp}{S_+^{(t)}}
\newcommand{\sm}{S_-^{(t)}}
\usepackage{algorithm,caption}
\usepackage{tikz}	
\usepackage{xcolor}
\usepackage[numbers]{natbib}
\usepackage{tabularx}
\usepackage{tcolorbox,tikz, lipsum}

\tcbuselibrary{skins}
\tcbuselibrary{breakable}

\usepackage[textsize=footnotesize]{todonotes}

\title{Topological trapping in circular midpoint opinion dynamics}

\title{Topological trapping in circular midpoint opinion dynamics}

\author{
Annika Brockhaus$^{1}$\thanks{\texttt{a.brockhaus@math.leidenuniv.nl}}
\and
Wioletta M.\ Ruszel$^{2}$\thanks{\texttt{w.m.ruszel@uu.nl}}
\and
Cristian Spitoni$^{2}$\thanks{\texttt{c.spitoni@uu.nl}}
}

\date{}

\makeatletter
\renewcommand{\ALG@name}{Stochastic dynamics} %Change the name Algorithm to Algoritme
\makeatother
\begin{document}

\maketitle
\begin{center}
{\small
$^{1}$Mathematical Institute, Leiden University, Leiden, The Netherlands\\
$^{2}$Mathematical Institute, Utrecht University, Utrecht, The Netherlands
}
\end{center}
\begin{abstract}
We study a discrete-time asynchronous midpoint dynamics on the circle in which,
at each step, a uniformly chosen neighboring pair moves to the midpoint along the
shortest arc. Although the update rule is locally contractive, we show that the global
relaxation mechanism depends sharply on the boundary topology.

Under open boundary conditions the system
converges almost surely to consensus through pure contraction. Under periodic
boundary conditions the graph contains a single cycle, and the wrapped edge
increments define an integer-valued winding number. While consensus remains the
unique absorbing state for every fixed system size, we show that topology
profoundly reshapes the transient dynamics.

We prove that branch--crossings are the only mechanism capable of modifying the
winding number and compute explicitly their probability for disordered initial
data. Local averaging rapidly suppresses large gradients and drives the system
into a no-branch-crossing regime where the winding number freezes. Inside a
fixed winding sector we construct an adaptive co-moving frame in which the
dynamics becomes an exact Euclidean midpoint process and establish strict contraction in a co–moving frame toward a twisted linear profile
determined by the winding number.

Our results isolate a minimal
mechanism by which a single cycle induces sector locking and escape,
even though the final equilibrium remains unchanged.
\end{abstract}

\section{Introduction}

Models of opinion dynamics and synchronization provide paradigmatic
examples of how collective order emerges from local interactions \cite{piko1985}.
In bounded–confidence models such as the
Deffuant--Weisbuch model~\cite{Deffuant2000,Weisbuch2002}
and the Hegselmann--Krause model~\cite{hegselmann2015,Lorenz2007},
real-valued opinions evolve through averaging rules that produce
consensus, clustering, or fragmentation depending on the interaction
structure.
In a complementary direction, coupled oscillator models,
most notably the Kuramoto model~\cite{Kuramoto1975,Strogatz2000,Acebron2005}
and its network extensions~\cite{Wiley2006,Marvel2009,Dorfler2014},
describe phase variables on the circle and reveal how geometry and
topology shape synchronization patterns.

On graphs containing cycles the configuration space naturally
decomposes into topological sectors labeled by an integer winding
number.
In many models of coupled phase oscillators on rings, such as the
Kuramoto model and its nonlocal variants, configurations with a
constant phase difference between neighboring oscillators form exact
equilibrium solutions, usually referred to as \emph{twisted states}
\cite{Wiley2006,GirnykHaslerMaistrenko2012,
DelabaysJacquodTyloo2017}.
Twisted states also arise as equilibrium solutions in deterministic
Kuramoto models on structured graphs, where their stability depends on
the network topology 
\cite{OmelchenkoWolfrumLaing2014,MedvedevWright2017,medvedev2015stability}. In stochastic Kuramoto-type models, twisted states may exhibit genuine
metastability under small noise, with trajectories remaining trapped
near a given winding sector for exponentially long times before
transitioning to another one \cite{Berglund2025}.

The situation in the present model is fundamentally different.
Because each update replaces a neighboring pair by its midpoint along
the shortest arc, a perfectly linear phase profile is not invariant
under the dynamics. Twisted configurations therefore do not correspond
to stationary states.
Instead they arise as long-lived transient structures generated by the
topological constraint imposed by the winding number.
The system contracts rapidly toward such twisted profiles but
eventually escapes through rare branch--crossing events, after which
the winding number decreases until consensus is reached.

The present work studies a minimal discrete-time model that lies at the
interface between opinion dynamics and phase synchronization.
We consider $N$ agents with opinions $\theta(i)\in[-\pi,\pi)$ evolving
by asynchronous short-arc midpoint averaging:
at each step a neighboring pair $(i,i{+}1)$ is selected uniformly at
random and both agents move to the midpoint along the shortest arc.
The rule is locally contractive and closely related to linear gossip
algorithms~\cite{Jadbabaie2003}, yet the circular geometry introduces a
nonlinear wrapping constraint that couples local averaging to global
topology.

Our focus is on how boundary topology interacts with this local
contractive mechanism.
On a finite chain the midpoint rule induces global contraction and the
system converges almost surely to consensus.
On the ring, however, the presence of a single cycle introduces a
topological invariant, the winding number $W$, measuring the total
signed increment around the graph.
Although consensus remains the unique absorbing state for every fixed
$N$, this topological constraint fundamentally alters the transient
dynamics.

We show that the winding number can change only through
\emph{branch--crossing} events, in which a midpoint update pushes a
local difference outside the principal branch.
Starting from disordered initial data, such events occur with positive
probability.
However, local averaging rapidly suppresses large gradients and drives
the system into a no--branch--crossing corridor.
From that time onward the winding number freezes and the dynamics
evolves within a fixed topological sector.

Our main result is that this sector locking traps the dynamics within a fixed winding sector for long time intervals.
Inside each winding sector the configuration decomposes into a linear
trend of slope $\beta = 2\pi W/N$ and fluctuations.  We construct an adaptive co-moving frame in which the detrended
dynamics becomes an exact Euclidean midpoint process and prove
strict contraction. In this frame the dynamics relaxes toward a
trapping manifold determined by the winding number.
Long-lived trapping thus emerges without competing equilibria: topology does not modify the final absorbing state, but reorganizes the relaxation mechanism by confining the dynamics within winding sectors for extended periods.

By comparing open and periodic boundary conditions within the same
contractive discrete model, we isolate a minimal mechanism by which a
single cycle induces topological trapping and slow relaxation.

The paper is organized as follows.
Section~\ref{sec:model} introduces the model and the notation.
Section~\ref{sec:consensus} establishes almost-sure convergence to consensus under both open and periodic boundary conditions.
Section~\ref{sec:meta} presents the main results on topological trapping in winding sectors.
Section~\ref{sec:meta1} identifies branch--crossings as the only mechanism that can modify the winding number and computes their probability for disordered initial data. In Section~\ref{sec:lift} we introduce a lifted representation of circular configurations on the universal covering space $\mathbb{R}$, where wrapped increments become ordinary differences and midpoint updates reduce to arithmetic midpoints.
Section~\ref{sec:euri} develops a heuristic picture of the long-lived winding configurations observed in the dynamics, while Section~\ref{sec:meta0} proves relaxation toward these configurations using an adaptive co-moving frame and a contraction argument.
Finally, Section~\ref{sec:escape-heur} describes a mechanism by which the dynamics can leave a winding sector, and Section~\ref{sec:future} concludes.
%
%
%----------------------------------------
% SECTION 2: MODEL DEFINITION, DYNAMICS
%----------------------------------------
%
%
\section{Model Definition and Dynamics}
\label{sec:model}
\subsection{Notation}

Let us define a graph $G=(V,\mathcal{E})$ as follows. For $N\ge2$ let $V=V_N=\{1,2,\dots,N\}$ denote the set of vertices.
We consider two possible edge sets $\mathcal{E}$.
Under open (empty) boundary conditions the set of edges is the set
$\mathcal E_{\emptyset}=\{\{i,i+1\}:1\le i\le N-1\}$,
while under periodic boundary conditions we have the ring $\mathcal E_{p}
=\mathcal E_{\emptyset}\cup\{\{N,1\}\}$. We will call $\mathcal{E}_{\emptyset}$ the \emph{open path} and $\mathcal{E}_p$ the \emph{ring}.
Throughout, $\mathcal E$ denotes either $\mathcal E_{\emptyset}$ or $\mathcal E_p$. Trivially, we have that $|\mathcal{E}_{\emptyset}|=N-1$ and $|\mathcal{E}_p|=N$.

For periodic boundary conditions, indices are interpreted cyclically: 
for $i \in \mathbb{Z}$ we define
$[i]_N := 1 + \big((i-1) \bmod N\big)
$,
so that $[i]_N \in \{1,\dots,N\}$. 
In particular, $N+1 \equiv 1$ and $0 \equiv N$. 
Whenever expressions such as $i+1$ or $i-1$ appear under periodic boundary conditions, 
they are implicitly replaced by $[i\pm1]_N$.

Each vertex $i\in V_N$ carries an opinion $\theta(i)\in\mathbb S^1$.
We identify $\mathbb S^1$ with the interval $[-\pi,\pi)$
and represent configurations as vectors
\[
\theta=(\theta(i))_{i\in V}\in\Omega_N:=[-\pi,\pi)^{V_N}.
\]
The configuration space $\Omega_N$ is endowed with its Borel $\sigma$-algebra,
denoted by $\mathcal B(\Omega_N)$.
For $a\in\mathbb R$, define the wrap operator
\begin{equation}
\label{e:wrap}
\wrap(a)
:=
a-2\pi\Big\lfloor\frac{a+\pi}{2\pi}\Big\rfloor,
\end{equation}
so that $\wrap(a)\in[-\pi,\pi)$.
All angular differences are computed using this operator,
that is, as shortest signed differences in $[-\pi,\pi)$.
We have the following properties of the wrap operator:
\begin{equation}
\label{eq:wrap1}
\wrap(\wrap(x)\pm y)=\wrap(x\pm y),
\end{equation}
for all $x,y\in[-\pi,\pi)$, since $\wrap(x)=x-2\pi \ell$, for some $\ell\in\mathbb{Z}$.
We fix the orientation of edges by declaring $(i,i+1)$
to be oriented from $i$ to $i+1$ for $i=1,\dots,N-1$,
and, in the periodic case, $(N,1)$ to be oriented from $N$ to $1$.
For an oriented edge $e=(i,j)\in\mathcal E$ we define the wrapped increment:
\begin{equation}
\label{e:deltae}
\delta_\theta(e)
:=
\wrap\!\big(\theta(j)-\theta(i)\big)
\in[-\pi,\pi).
\end{equation}
Since the interaction graph of $\mathcal{E}$ is a path or a ring, each oriented edge is uniquely
determined by its left endpoint. Hence, with a slight abuse of notation,
we index edge increments by vertices and write for $i=1,\ldots, N$,
\begin{equation}
\label{s:deltai}
\delta(i) := \delta_\theta((i,i+1)),
\end{equation}
with the convention that $(N,N+1)=(N,[N+1]_N)=(N,1)$ in the periodic case.
The wrapped increments $(\delta(i))_i$ encode the local angular
differences along the oriented edges of the graph. A natural global quantity associated with the configuration is the total angular increment
obtained by summing its components.
We formalize this distinction in the following definition.
\begin{definition} [Total increment and winding number]
\label{def:winding}
Define the total increment and winding number respectively by:
\begin{equation}
\label{e:w}
m(\theta):=\sum_{e\in\mathcal E}\delta_\theta(e),
\qquad
W(\theta):=\frac{1}{2\pi}m(\theta).
\end{equation}
\end{definition}
\begin{remark}
\label{rem:wind}
 If $\mathcal E=\mathcal E_p$, then $W(\theta)\in\mathbb Z$.
If $\mathcal E=\mathcal E_{\emptyset}$, then $W(\theta)\in\mathbb R$ in general.   
In fact, for periodic boundary conditions, for each $i$ there exists $k_i\in\mathbb{Z}$ such that
\[
\delta(i)=\theta(i+1)-\theta(i)-2\pi k_i,
\]
since $\wrap(x)=x-2\pi k$ for a suitable integer $k\in\mathbb{Z}$.  Summing over 
$i=1,\dots,N$ and using periodicity $\theta(N+1)=\theta(1)$ gives
\[
\sum_{i=1}^{N}\delta(i)
=
\sum_{i=1}^{N}\big(\theta(i+1)-\theta(i)\big)
-
2\pi\sum_{i=1}^{N}k_i
=
-2\pi\sum_{i=1}^{N}k_i.
\]
Hence,
\[
W(\theta)=\frac{1}{2\pi}\sum_{i=1}^{N}\delta(i)\in\mathbb Z.
\]
\end{remark}

% SUBSECTION: stochastic dynamics
\subsection{Stochastic dynamics}
\label{ref:acca}
We will use  a \emph{random-sequential} averaging process which can be viewed as a sort of \emph{Asynchronous Continuous-State Cellular Automaton} (\texttt{ACCA}): a  discrete time,  continuous space Markov process  $(\theta_t)_{t \geq 0}$ with deterministic local evolution given by local averages.
More precisely, given the edge set $\mathcal{E}$, with  $\mathcal{E}\in\{\mathcal{E}_\emptyset,\mathcal{E}_p\}$, the time evolution up to time $T\in\mathbb{N}_0$ is described by the following algorithm:
\begin{algorithm}[H] 
\caption*{Asynchronous Continuous-state Cellular Automaton (\texttt{ACCA})}
    
\begin{enumerate}
    \item \textbf{Initialization:} At $t=0$, start from an initial configuration $\theta_0 \in \Omega_N$.
    %such that  $\theta_0(i)\overset{i.i.d.}{\sim}\textnormal{Unif}([-\pi,\pi])$ for each $i\in V_N$
    
    \item \textbf{Select edge $e$:} At the updated time $t \gets t + 1$\ select an edge $e = (k, k+1) \in \mathcal{E}$ uniformly at random, i.e., $e\overset{}{\sim}\textnormal{Unif}(\mathcal{E})$.
    \item \textbf{Update opinions of edge} $e$: Update only the opinions at $e=(k,k+1)$, by setting:
\begin{equation}
\left\{
\begin{array}{ll}
&\theta_{t+1}(k)=  
\wrap\!\big(\theta_t(k) + \tfrac12 \delta_t(k)\big),\\
&\theta_{t+1}(k+1)=
\wrap\!\big(\theta_t(k+1) - \tfrac12 \delta_t(k)\big),
\end{array}
\right.
\label{eq:update2}
\end{equation}
with $\delta_t(k):=\delta_{\theta_t}(k) =\wrap(\theta_t(k{+}1)-\theta_t(k))$. Notice that
$\theta_{t+1}(k)=\theta_{t+1}(k+1)$ (see Remark~\ref{rem:midpoint-correct}).
\\
If $\delta_t(k)=-\pi$ the two vertices are antipodal (see Figure~\ref{fig:acca}) and there are
two possible shortest midpoints on the circle. In this case we choose
one of them with probability $1/2$ each.

    \item \textbf{Repeat steps 2,3 } until $t=T$.
\end{enumerate}
    \end{algorithm}
The \texttt{ACCA} dynamics defines a time-homogeneous discrete-time Markov process
$(\theta_t)_{t\ge0}$ on the measurable space
$(\Omega_N,\mathcal B(\Omega_N))$.
When $|\delta_\theta(e)|\neq \pi$, the midpoint update is deterministic
and defines for $e=(k,k+1)\in\mathcal{E}$ a measurable map $F_e:\Omega_N\to\Omega_N$ given by
\[
(F_e(\theta))(i)=
\begin{cases}
\wrap\!\big(\theta(k)+\tfrac12\,\delta_\theta(e)\big), & i=k,\\[4pt]
\wrap\!\big(\theta(k+1)-\tfrac12\,\delta_\theta(e)\big), & i=k+1,\\[4pt]
\theta(i), & \text{otherwise.}
\end{cases}
\]

In the antipodal case $|\delta_\theta(e)|=\pi$, the midpoint on the circle
is not unique and two symmetric midpoint configurations arise.
We denote the corresponding deterministic maps by
$F_e^{+}$ and $F_e^{-}$. The edge-update kernel $K_e:\Omega_N\times\mathcal B(\Omega_N)\to[0,1]$
is therefore defined by
\[
K_e(\theta,A)=
\begin{cases}
\mathbf 1_A\!\big(F_e(\theta)\big), & |\delta_\theta(e)|\neq \pi,\\[6pt]
\dfrac12\,\mathbf 1_A\!\big(F_e^{+}(\theta)\big)
+\dfrac12\,\mathbf 1_A\!\big(F_e^{-}(\theta)\big), & |\delta_\theta(e)|=\pi,
\end{cases}
\]
for all $\theta\in\Omega_N$ and $A\in\mathcal B(\Omega_N)$.
Finally, since at each step the edge is chosen uniformly at random,
the transition kernel of the Markov process is
\[
K(\theta,A)=\frac1{|\mathcal{E}|}\sum_{e\in \mathcal{E}} K_e(\theta,A),
\qquad
\theta\in\Omega_N,\;A\in\mathcal B(\Omega_N).
\]

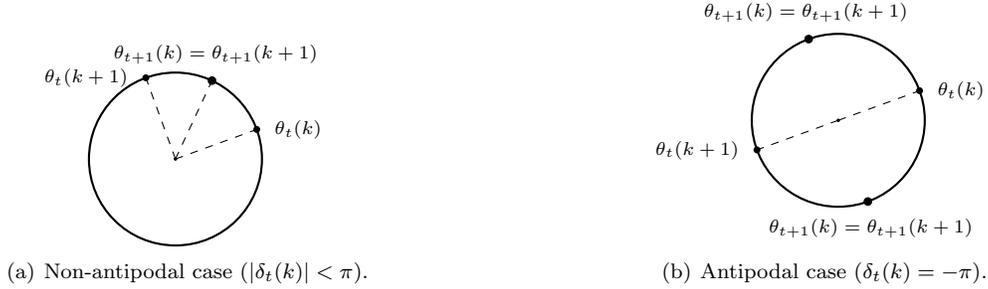
\begin{figure}[htbp]
\centering

\subfigure[Non-antipodal case ($|\delta_t(k)|<\pi$).]{%
\begin{minipage}[t]{0.48\textwidth}
\centering
\begin{tikzpicture}[scale=1.15, every node/.style={font=\scriptsize}]
% Circle
\draw[thick] (0,0) circle (1);
\fill (0,0) circle (0.02);

% angles
\def\a{20}
\def\b{110}
\pgfmathsetmacro{\m}{(\a+\b)/2}

\coordinate (A) at (\a:1);
\coordinate (B) at (\b:1);
\coordinate (M) at (\m:1);

% dashed radii
\draw[dashed] (0,0) -- (A);
\draw[dashed] (0,0) -- (B);
\draw[dashed] (0,0) -- (M);

% points
\fill (A) circle (0.04);
\fill (B) circle (0.04);
\fill (M) circle (0.05);

% labels
\node[right] at ($(A)+(0.10,0)$) {$\theta_t(k)$};
\node[left]  at ($(B)+(-0.10,0)$) {$\theta_t(k+1)$};
\node[above] at (\m:1.10) {$\theta_{t+1}(k)=\theta_{t+1}(k+1)$};

\end{tikzpicture}
\end{minipage}
}\hfill
\subfigure[Antipodal case ($\delta_t(k)=-\pi$).]{%
\begin{minipage}[t]{0.48\textwidth}
\centering
\begin{tikzpicture}[scale=1.15, every node/.style={font=\scriptsize}]
% Circle
\draw[thick] (0,0) circle (1);
\fill (0,0) circle (0.02);

% antipodal angles
\def\a{20}
\pgfmathsetmacro{\b}{\a+180}

% two possible midpoints
\pgfmathsetmacro{\mA}{\a+90}
\pgfmathsetmacro{\mB}{\a-90}

\coordinate (A)  at (\a:1);
\coordinate (B)  at (\b:1);
\coordinate (MA) at (\mA:1);
\coordinate (MB) at (\mB:1);

% radii
\draw[dashed] (0,0) -- (A);
\draw[dashed] (0,0) -- (B);

% points
\fill (A) circle (0.04);
\fill (B) circle (0.04);
\fill (MA) circle (0.05);
\fill (MB) circle (0.05);

% labels
\node[right] at ($(A)+(0.10,0)$) {$\theta_t(k)$};
\node[left]  at ($(B)+(-0.10,0)$) {$\theta_t(k+1)$};

\node[above] at (\mA:1.10) {$\theta_{t+1}(k)=\theta_{t+1}(k+1)$};
\node[below] at (\mB:1.10) {$\theta_{t+1}(k)=\theta_{t+1}(k+1)$};
\end{tikzpicture}
\end{minipage}
}

\caption{Midpoint update under the \texttt{ACCA} dynamics.
(a) If $|\wrap(\theta_t(k+1)-\theta_t(k))|<\pi$, the shortest arc is unique and both angles move to its midpoint.
(b) If $|\wrap(\theta_t(k+1)-\theta_t(k))|=\pi$, the two angles are antipodal and there are two shortest semicircles; the update chooses one midpoint uniformly with probability $1/2$.}
\label{fig:acca}
\end{figure}

 \begin{remark}
\label{rem:midpoint-correct}
Under the update rule \eqref{eq:update2} we have that $\theta_{t+1}(k)=\theta_{t+1}(k+1)$, see Figure~\ref{fig:acca}.
In fact, by definition of the wrap operator, there exists an integer
$m\in \mathbb{Z}$ such that
$\delta_t(k)=\theta_t(k{+}1)-\theta_t(k)-2\pi m$.
Therefore
\[
\theta_t(k)+\tfrac12\,\delta_t(k)
=
\frac{\theta_t(k)+\theta_t(k{+}1)}{2}-\pi m,
\]
\[
\theta_t(k{+}1)-\tfrac12\,\delta_t(k)
=
\frac{\theta_t(k)+\theta_t(k{+}1)}{2}+\pi m.
\]
The two quantities differ by $2\pi m$, and  since $\mathrm{wrap}_\pi(x+2\pi m)=\mathrm{wrap}_\pi(x)$ for every $m\in\mathbb Z$,
we have
$\theta_{t+1}(k)=\theta_{t+1}(k+1)$.
It is important to stress that, in general,
\[
\theta_{t+1}(k)
\neq
\wrap\!\Big(\frac{\theta_t(k)+\theta_t(k{+}1)}{2}\Big),
\]
unless $m=0$ (see Figure~\ref{fig:midpoint-counterexample}), that is, unless the unwrapped difference
$\theta_t(k{+}1)-\theta_t(k)$ already belongs to $[-\pi,\pi)$.
The update therefore corresponds to the midpoint
\emph{along the shortest arc} on $\mathbb S^1$, rather than
the wrapped arithmetic mean.
\end{remark}

\begin{remark}
If $\delta_t(k)\neq -\pi$, the shortest arc on $\mathbb{S}^1$ is unique and so is
its midpoint. When $\delta_t(k)=-\pi$, the two angles are antipodal and
there exist two shortest semicircles of length $\pi$, whose midpoints are
themselves antipodal.

For instance, if $\theta_t(k)=\pi/2$ and $\theta_t(k+1)=-\pi/2$,
then the two shortest arcs have midpoints $0$ and $-\pi$.
Under the convention $\delta_t(k)\in[-\pi,\pi)$ one has $\delta_t(k)=-\pi$,
and the deterministic update yields
$\theta_{t+1}(k)=\theta_{t+1}(k+1)=-\pi$.
We choose  to randomize instead with equal probability between the two midpoints of the shortest arcs,
in order to restore the symmetry of the model.
\end{remark}

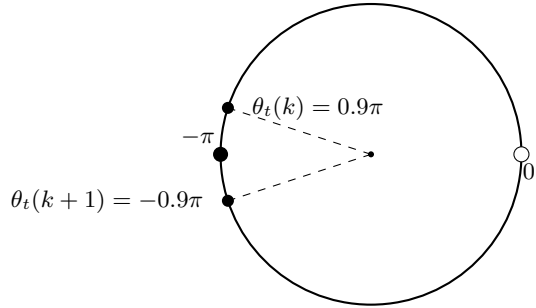
\begin{figure}[H]
\centering
\begin{tikzpicture}[scale=2, every node/.style={font=\small}]

% Circle
\draw[thick] (0,0) circle (1);
\fill (0,0) circle (0.02);

% Counterexample angles
\def\a{162}     % 0.9π ≈ 162°
\def\b{-162}    % -0.9π ≈ -162°

% Wrapped midpoint along shortest arc
\pgfmathsetmacro{\mshort}{180}   % -π and π coincide

% Arithmetic mean
\pgfmathsetmacro{\mmean}{0}

% Coordinates
\coordinate (A) at (\a:1);
\coordinate (B) at (\b:1);
\coordinate (Mshort) at (180:1);
\coordinate (Mmean) at (0:1);

% Radii
\draw[dashed] (0,0) -- (A);
\draw[dashed] (0,0) -- (B);

% Points
\fill (A) circle (0.04);
\fill (B) circle (0.04);

\node[right] at ($(A)+(0.1,0)$) {$\theta_t(k)=0.9\pi$};
\node[left]  at ($(B)+(-0.1,0)$) {$\theta_t(k+1)=-0.9\pi$};

% Correct midpoint
\fill (Mshort) circle (0.05);
\node[above] at (180:1.15) {$-\pi$};

% Wrong arithmetic mean
\fill[white] (Mmean) circle (0.05);
\draw (Mmean) circle (0.05);
\node[below] at (0:1.05) {$0$};

\end{tikzpicture}
\caption{Example illustrating that the midpoint along the shortest arc
does not coincide with the wrapped arithmetic mean.
Here $\theta_t(k)=0.9\pi$ and $\theta_t(k+1)=-0.9\pi$.
The update yields $-\pi$, while the wrapped arithmetic mean equals $0$.}
\label{fig:midpoint-counterexample}
\end{figure}
%
%
%----------------------------------------
% SECTION 3: CONVERGENCE TOWARDS CONSENSUS
%----------------------------------------
%
%
\section{Convergence towards consensus}
\label{sec:consensus}

The \texttt{ACCA} dynamics is locally contractive: each update replaces a
neighboring pair by its short-arc midpoint and therefore reduces
disagreement.
On any finite connected graph this mechanism enforces global alignment,
so consensus is the unique absorbing state.

In this section we establish almost-sure convergence to consensus for the
discrete-time midpoint dynamics on both the path (open boundary
conditions) and the ring (periodic boundary conditions).
These results are discrete-time counterparts of the continuous-time
compass model results of \cite{gantert2020}.
Since each \texttt{ACCA} update applies exactly the same midpoint map
(with $\mu=1/2$), the Lyapunov and flattening arguments of
\cite[Lemma~3.1 and Proposition~3.3]{gantert2020} extend directly.

The only role of the Poisson clocks in the continuous-time model is to
ensure that prescribed finite patterns of edge updates occur with
positive probability.
In the present discrete-time dynamics the selected edges form an
i.i.d.\ sequence with uniform law on $\mathcal E$.
Consequently every finite sequence of edge selections has strictly
positive probability, and the arguments of \cite{gantert2020} carry over
verbatim.

For a configuration $\theta\in\Omega_N$ on a finite graph
$G=(V_N,\mathcal E)$ with wrapped edge increments $\delta_t(e)$,
we say that \emph{weak consensus} holds if $\delta_t(e)\to0$ for all
$e\in\mathcal E$, and that \emph{strong consensus} holds if
$\theta_t(v)\to L$ for all $v\in V_N$ and some random
$L\in\mathbb S^1$.
On finite connected graphs weak consensus implies strong consensus.

\begin{lemma}[Consensus with open boundary conditions]
\label{lem:path-consensus}
For every fixed $N$, the discrete-time \texttt{ACCA} dynamics satisfies
\[
\lim_{t\to\infty}
\sum_{e\in\mathcal E_{\emptyset}}
|\delta_t(e)| = 0
\qquad\mathrm{a.s.}
\]
In particular, strong consensus holds almost surely.
\end{lemma}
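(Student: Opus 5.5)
Along the open path there is no cycle, so we can lift the configuration to $\mathbb R$ and reduce to a Euclidean problem. Define $\tilde\theta_t(1)=\theta_t(1)$ and $\tilde\theta_t(i+1)=\tilde\theta_t(i)+\delta_t(i)$ for $i=1,\dots,N-1$. This is a real configuration whose consecutive differences are exactly the wrapped increments.

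\textbf{Effect of one update.} Suppose the selected edge is $(k,k+1)$ with $\delta_t(k)\neq-\pi$. By Remark~\ref{rem:midpoint-correct}, both endpoints move to the lifted arithmetic midpoint, and all other lifted positions can be kept fixed. The three affected increments change as follows:
\begin{itemize}
\item $\delta(k)$ becomes $0$;
\item $\delta(k-1)$ becomes $\delta(k-1)+\tfrac12\delta(k)$;
\item $\delta(k+1)$ becomes $\delta(k+1)+\tfrac12\delta(k)$.
\end{itemize}
These identities hold provided the new values still lie in $[-\pi,\pi)$, i.e.\ no wrap occurs. If a wrap does occur, it only decreases the modulus of the wrapped increment, so the inequalities below still hold. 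The antipodal case $\delta_t(k)=-\pi$ is treated the same way: both random choices produce increments of modulus at most those of the lifted formula, by the same wrapping observation.

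\textbf{Lyapunov function.} I will use the range $R_t=\max_i\tilde\theta_t(i)-\min_i\tilde\theta_t(i)$ of the lifted profile, or alternatively $\Phi_t=\sum_e|\delta_t(e)|$. Since the new value is a midpoint, $\Phi$ is nonincreasing:
\[
|\delta(k-1)+\tfrac12\delta(k)|+|\delta(k+1)+\tfrac12\delta(k)|\le|\delta(k-1)|+|\delta(k+1)|+|\delta(k)|.
\]
Hence $\Phi_t$ converges almost surely to some limit $\Phi_\infty\ge0$. It remains to show $\Phi_\infty=0$.

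\textbf{Flattening argument.} This follows \cite[Proposition~3.3]{gantert2020}. Fix the finite deterministic sweep that updates edges $1,2,\dots,N-1$ in order, repeated $M$ times. On the lifted profile each step is a genuine midpoint averaging, so its product is a fixed stochastic matrix $P$ acting on the $N-1$ edges of a path. Since the path is connected and the product contains every edge, $P^M$ contracts the disagreement: every lifted coordinate is driven to within $\varepsilon R$ of a common value, for some $\varepsilon=\varepsilon(N,M)<1$. This yields
\[
\Phi\le(N-1)R\qquad\text{and}\qquad R_{\text{after}}\le\varepsilon R_{\text{before}}.
\]
Wrapping only helps, as noted above, and $R_t$ itself is nonincreasing because a midpoint lies inside the current range.

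Each sweep has probability $(N-1)^{-M(N-1)}>0$. The edge choices are i.i.d.\ and the sweep's effect is deterministic, so the sweep occurs infinitely often almost surely in disjoint time blocks (second Borel–Cantelli lemma). Between sweeps $R$ does not increase. Therefore $R_t\to0$ almost surely, hence $\Phi_t\le(N-1)R_t\to0$.

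\textbf{Strong consensus.} Once $R_t<\pi/2$, no wrap occurs, and the lifted dynamics is exactly the Euclidean gossip process. Its average $\sum_i\tilde\theta_t(i)$ is conserved. All coordinates then converge to this average, and wrapping it gives the limit $L$.

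\textbf{Main obstacle.} The delicate step is the interaction of the lift with the wrap at the neighbouring edges $k\pm1$. The inequality $|\wrap(x)|\le|x|$ handles monotonicity of $\Phi$. Monotonicity of $R_t$, however, needs care, because the lift is re-anchored at vertex $1$ at every step. I would therefore define $R_t$ directly through the increments, as the maximum over $i<j$ of $\bigl|\sum_{i\le l<j}\delta_t(l)\bigr|$.
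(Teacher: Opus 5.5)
There is a genuine gap in the flattening step. It needs two facts about the lifted range: that $R_t$ does not increase between blocks, and that a block of $M$ sweeps multiplies it by $\varepsilon<1$ through a fixed stochastic matrix. Both presuppose that every update acts on the lift as arithmetic midpoint averaging. That fails as soon as a neighbour sum $\delta_t(k\pm1)+\tfrac12\delta_t(k)$ leaves $[-\pi,\pi)$: the part of the lift on one side of the wrapped edge is then displaced by $\pm2\pi$ relative to the other side. The bound $|\wrap(x)|\le|x|$ gives monotonicity of $\Phi$, not of $R$, and ``a midpoint lies inside the current range'' says nothing about this displacement.

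Concretely, take $N=4$ and $\theta=(0,\,0.99\pi,\,-0.97\pi,\,0.04\pi)$. The increments are $(0.99\pi,\,0.04\pi,\,-0.99\pi)$, the lift is $(0,\,0.99\pi,\,1.03\pi,\,0.04\pi)$, and $R=1.03\pi$. Updating edge $(2,3)$ moves both endpoints to $-0.99\pi$. The new increments are $(-0.99\pi,\,0,\,-0.97\pi)$, the lift becomes $(0,\,-0.99\pi,\,-0.99\pi,\,-1.96\pi)$, and $R$ jumps to $1.96\pi$. Meanwhile $\Phi$ correctly drops from $2.02\pi$ to $1.96\pi$.

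Your remedy at the end does not help. The quantity $\max_{i<j}\bigl|\sum_{i\le l<j}\delta_t(l)\bigr|$ is exactly this lifted range, which never depended on the anchor; the obstruction is the wrap, not the re-anchoring. For the same reason, the contraction $R_{\mathrm{after}}\le\varepsilon R_{\mathrm{before}}$ via $P^M$ is only justified for wrap-free sweeps. From a general initial configuration, wraps inside the block are precisely what the lemma must handle, so ``wrapping only helps'' is unsupported for $R$.

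The missing ingredient is control of wraps inside the block, with $\Phi$ (which is genuinely monotone) rather than $R$ tracked across blocks. Your own left-to-right sweep $1,2,\dots,N-1$ provides this:
\begin{enumerate}
\item When edge $j+1$ is updated, $\delta(j)$ has just been reset to $0$ and becomes $\wrap(\tfrac12\delta(j+1))$. Hence after any sweep, $|\delta(j)|\le\pi/2$ for all $j$.
\item From such a state the next sweep is wrap-free. By induction, $|\delta(j)|\le(1-2^{-j})\pi$ at the moment edge $j$ is updated, which keeps both neighbour sums in $(-\pi,\pi)$.
\item In a block of $1+M$ sweeps, the last $M$ are therefore linear. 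Using $R\le\Phi\le(N-1)R$ and the monotonicity of $\Phi$, you get $\Phi_{\mathrm{after}}\le(N-1)\varepsilon\,\Phi_{\mathrm{before}}$.
\item Choose $M$ with $(N-1)\varepsilon<1$ and finish with Borel--Cantelli as you planned.
\end{enumerate}
Alternatively, use such a block to reach $\Phi_t<\pi$. Since $|\delta_t(k\pm1)+\tfrac12\delta_t(k)|\le\Phi_t$ and $\Phi$ is nonincreasing, no wrap ever occurs afterwards, and your Euclidean argument with monotone $R$ applies verbatim.

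For comparison, the paper's sketch never uses the range. It relies only on the monotonicity of $\mathcal W_t=\sum_e|\delta_t(e)|$ and propagates $\delta_t(e)\to0$ inward from the free end edge $e_1$.
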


\begin{proof}[Proof sketch]
This is the discrete-time analogue of
\cite[Lemma~3.1]{gantert2020}.
Let $V_N$ be the path graph and define
\[
\mathcal W_t :=
\sum_{e\in\mathcal E_{\emptyset}} |\delta_t(e)|.
\]

A midpoint update affects only the selected edge and its neighbors.
Since the maximal degree of the path is $2$, the local computation in
\cite[(10)--(12)]{gantert2020} shows that one update cannot increase
$\mathcal W_t$.
Hence $(\mathcal W_t)_{t\ge0}$ is nonincreasing and therefore converges
almost surely.

To identify the limit, consider the leftmost edge $e_1=(1,2)$.
Whenever $e_1$ is updated,
\[
\delta_{t+1}(e_1)=(1-2\mu)\delta_t(e_1).
\]
Since $\mu=1/2$, this yields $\delta_{t+1}(e_1)=0$.
Because edges are chosen i.i.d.\ and uniformly, $e_1$ is selected
infinitely often almost surely, and therefore
$\delta_t(e_1)\to0$.

Proceed inductively along the path.
Suppose $\delta_t(e_j)\to0$ almost surely for all $j\le r$.
If $\delta_t(e_{r+1})$ did not converge to zero, there would exist
$\varepsilon>0$ and infinitely many times with
$|\delta_t(e_{r+1})|>\varepsilon$.
At any subsequent update of $e_{r+1}$ the neighboring increment
$e_r$ would receive a contribution of order $\mu\varepsilon$,
contradicting the convergence $\delta_t(e_r)\to0$.
Hence $\delta_t(e_{r+1})\to0$ almost surely. By induction,
$\delta_t(e)\to0$ for all $e\in\mathcal E_{\emptyset}$,
and therefore 
$\sum_{e\in\mathcal E_{\emptyset}} |\delta_t(e)| \to 0$
a.s..

On a finite connected graph weak consensus implies strong consensus.
\end{proof}

\begin{proposition}[Consensus with periodic boundary conditions]
\label{prop:ring-consensus}
For every fixed $N$, the discrete-time \texttt{ACCA} dynamics satisfies
\[
\lim_{t\to\infty}
\max_{e\in\mathcal E_p}
|\delta_t(e)| = 0
\qquad\mathrm{a.s.},
\]
and hence strong consensus holds almost surely.
\end{proposition}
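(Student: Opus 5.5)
The plan is to run a Borel--Cantelli (finite-pattern) argument rather than a monotone Lyapunov argument. On the ring, $\sum_e|\delta_t(e)|$ is not guaranteed to be nonincreasing, and the winding number $W(\theta_t)$ can be nonzero, so the path argument of Lemma~\ref{lem:path-consensus} cannot be copied verbatim. The steps are: (i) show that from every configuration a fixed finite sequence of edge updates drives the system into a set $C_\varepsilon$ of ``almost consensus'' configurations; (ii) use that each such sequence has probability at least $p>0$, uniformly in $\theta$; (iii) show that $C_\varepsilon$ is forward invariant and that inside it the dynamics behaves like the open-path case.

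\textbf{Step 1 (flattening in $N-1$ steps).} From any $\theta$, update the edges $(1,2),(2,3),\dots,(N-1,N)$ in this order. After updating $(1,2)$, vertices $1,2$ coincide. Updating $(2,3)$ then moves $2,3$ to a common point, but vertex $1$ stays behind, so this naive sweep does not give consensus.

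Instead I would use the following flattening from \cite[Prop.~3.3]{gantert2020}. Repeatedly update a single edge $(k,k+1)$ together with its neighbours in a ``zig-zag'' $(k,k+1),(k-1,k),(k,k+1),\dots$, $M$ times. This makes $\theta(k-1),\theta(k),\theta(k+1)$ all within distance $2^{-cM}\pi$ of each other while keeping them in a short arc. Iterating this block along the path $1,\dots,N$ produces a configuration in which every increment on $(1,2),\dots,(N-1,N)$ is less than $\varepsilon$. The total number of steps is some $L=L(N,\varepsilon)$, and the realization has probability at least $p:=|\mathcal E_p|^{-L}>0$. The antipodal coin flips only multiply this by a factor $2^{-L}$ at worst.

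After this block, all points lie in an arc of length at most $(N-1)\varepsilon<\pi/2$, so the closing increment $\delta(N)$ is also small. Hence $W=0$ and the whole configuration sits in a short arc.

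\textbf{Step 2 (invariance and conclusion).} Let $A=\{\theta:\text{all }\theta(i)\text{ lie in an arc of length }<\pi/2\}$. For $\theta\in A$ every midpoint update is the Euclidean midpoint of lifted values, so $A$ is forward invariant. In the lift, the dynamics is linear averaging on the ring with conserved mean. The quantity $\max_i\tilde\theta(i)-\min_i\tilde\theta(i)$ is nonincreasing, and $\sum_i(\tilde\theta(i)-\bar\theta)^2$ is a supermartingale with strict decrease whenever a non-flat edge is chosen. Standard gossip arguments, or the path argument of Lemma~\ref{lem:path-consensus} applied to the lift, then give $\delta_t(e)\to0$ for all $e$ almost surely on $A$.

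By Step 1, in each block of $L$ steps the process enters $A$ with conditional probability at least $p$, independently of the past (strong Markov property at block times). Hence $\tau_A<\infty$ almost surely by the conditional Borel--Cantelli lemma, and the proposition follows. Strong consensus then follows as in Lemma~\ref{lem:path-consensus}, since on $A$ the lifted values converge to the conserved mean.

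\textbf{Main obstacle.} The hardest step is the uniform flattening in Step 1. One must check that the zig-zag blocks do not create antipodal pairs or wrap-arounds that undo earlier progress. I would handle this by working sequentially: once a growing block of consecutive vertices lies in an arc of length $<\varepsilon$, updating the edge between the block's endpoint and the next vertex pulls only two points together. I would then re-contract the block by zig-zagging back across it, which costs $O(N\log(1/\varepsilon))$ steps. Since all these steps occur within short arcs, the short-arc midpoint coincides with the arithmetic midpoint of the lift, and the control becomes purely Euclidean.
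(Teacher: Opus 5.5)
Your proof is correct, but it takes a different route from the paper's.

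\textbf{The paper's route.} The paper keeps the $L^1$ functional $\mathcal W_t=\sum_{e\in\mathcal E_p}|\delta_t(e)|$, which is in fact nonincreasing on the ring as well. An update of edge $k$ removes $|\delta_t(k)|$ and, since $|\wrap(x)|\le|x|$, adds at most $\tfrac12|\delta_t(k)|$ to each neighbouring increment. So your opening claim to the contrary is wrong, although you never use it. For each $\varepsilon>0$ the paper takes a configuration-independent word of interior edges forcing $\sum_{e\in\mathcal E_\emptyset}|\delta(e)|\le\varepsilon$. It controls the closing edge by the triangle inequality and lets the monotonicity of $\mathcal W_t$ carry this smallness forward, so infinitely many good blocks give $\limsup_t\max_e|\delta_t(e)|\lesssim\varepsilon$.

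\textbf{Your route and what each buys.} You need only one good block, landing in the forward-invariant short-arc set $A$. After that, Euclidean gossip in the lift does the rest and yields strong consensus directly through the conserved mean. The cost is the (standard) convergence argument inside $A$. The gain is that your construction actually produces the uniform word. The paper merely asserts this word from Lemma~\ref{lem:path-consensus}, which is an almost-sure statement and does not by itself give a word valid for all initial data.

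\textbf{Which Step 1 to keep.} Keep only the version of Step 1 from your last paragraph. The first version, chaining three-site zig-zags along the path, fails: flattening $(j,j+1,j+2)$ can pull vertex $j$ away from $j-1$. In the absorb-then-recontract version, absorbing vertex $j+1$ leaves the block in an arc of length at most $\pi/2+\varepsilon<\pi$, whichever midpoint is chosen in the antipodal case. All subsequent updates on the block are then Euclidean. A fixed word, such as a forward sweep followed by a backward sweep on the block (whose product matrix is entrywise positive), contracts the diameter by a uniform factor. Hence $L$ is finite and independent of $\theta$. Your count $O(N\log(1/\varepsilon))$ is not justified, but it is also not needed.
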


\begin{proof}[Proof sketch]
We adapt the argument of
\cite[Proposition~3.3]{gantert2020} to the discrete-time
\texttt{ACCA} dynamics. Let $e_\star=(N,1)$ denote the closing edge and define as before:
\[
\mathcal W_t :=
\sum_{e\in\mathcal E_p} |\delta_t(e)|.
\]
Similarly to the path, a midpoint update cannot increase $\mathcal W_t$,
so $(\mathcal W_t)_{t\ge0}$ is non-increasing and converges almost surely. Fix $\varepsilon>0$.
By Lemma~\ref{lem:path-consensus} there exists a finite sequence of
interior edges
\[
\mathbf w=(e^{(1)},\dots,e^{(m)})
\in\mathcal E_{\emptyset}^m
\]
such that applying these $m$ updates successively on the path produces
a configuration satisfying
\[
\sum_{e\in\mathcal E_{\emptyset}} |\delta(e)| \le \varepsilon,
\]
independently of the initial configuration. Let $A_\ell$ denote the event that during the $\ell$-th block of
$m$ steps the selected edges coincide with the word $\mathbf w$.
Since edges are chosen i.i.d.\ uniformly from $\mathcal E_p$,
\[
\mathbb P(A_\ell)=N^{-m}>0,
\]
and the events $(A_\ell)_{\ell\ge0}$ are independent across blocks. Whenever $A_\ell$ occurs, the closing edge $e_\star$ is not updated
during that block, so the dynamics coincides with the path dynamics.
Thus
\[
\sum_{e\in\mathcal E_{\emptyset}} |\delta_t(e)| \le \varepsilon
\]
at the end of that block.
By the triangle inequality,
\[
|\delta_t(e_\star)|
\le
\sum_{e\in\mathcal E_{\emptyset}} |\delta_t(e)|
\le
\varepsilon,
\]
and therefore
\[
\max_{e\in\mathcal E_p} |\delta_t(e)| \le \varepsilon .
\]
Since the events $(A_\ell)_{\ell}$ occur infinitely often almost surely and
$\mathcal W_t$ is nonincreasing, we obtain
\[
\limsup_{t\to\infty}
\max_{e\in\mathcal E_p} |\delta_t(e)|
\le \varepsilon
\qquad\text{a.s.}
\]
Because $\varepsilon>0$ is arbitrary,
\[
\max_{e\in\mathcal E_p} |\delta_t(e)| \to 0
\qquad\text{a.s.}
\]
which implies strong consensus.
\end{proof}

The results above establish almost-sure convergence to consensus for
every fixed $N$. The simulations below focus on the open-boundary case
and illustrate the relaxation mechanism on the path, highlighting the
strong slowdown of convergence as the system size increases.

Figures~\ref{fig:200_empty}--\ref{fig:sim5_empty_cyl} illustrate indeed the contraction
mechanism and the growth of relaxation times with system size. Under open
boundary conditions, the midpoint dynamics drives the system directly toward
consensus (Lemma~\ref{lem:path-consensus}). However, the relaxation becomes
increasingly slow as $N$ grows, consistent with the diffusive scaling of
repeated local averaging. In particular, for $N=1000$ the configuration
remains far from flat even after $10^{10}$ updates (Figure~\ref{fig:sim5_empty}),
with long-lived  \emph{ladder} structures that decay only on extremely long
time scales.

Because angular coordinates $\theta_t(i)\in[-\pi,\pi)$ are discontinuous at
$\pm\pi$, nearby points on $\mathbb{S}^1$ may appear far apart when they lie
on opposite sides of the cut. To avoid this artifact we also represent the
configuration on a cylinder, where the phase is plotted as a continuous
angular coordinate and the index $i$ forms the axial direction. In this
representation consensus appears as a vertical line (see panel (d) of Figure~\ref{fig:sim5_empty_cyl}), while winding manifests
as a helical structure.

The slow contraction observed on the open path provides a baseline time
scale for the dynamics. In the periodic case studied in Section~\ref{sec:meta}, the presence of
a cycle fundamentally modifies the transient behavior: once branch--crossings
cease, the winding number freezes and the system evolves near a twisted
configuration before eventual convergence to consensus.

% N=200

\begin{figure}[H]
    \centering
    \scalebox{0.75}{%
        \begin{minipage}{\textwidth}
            \centering
            
            \subfigure[]{\includegraphics[width=0.44\textwidth]{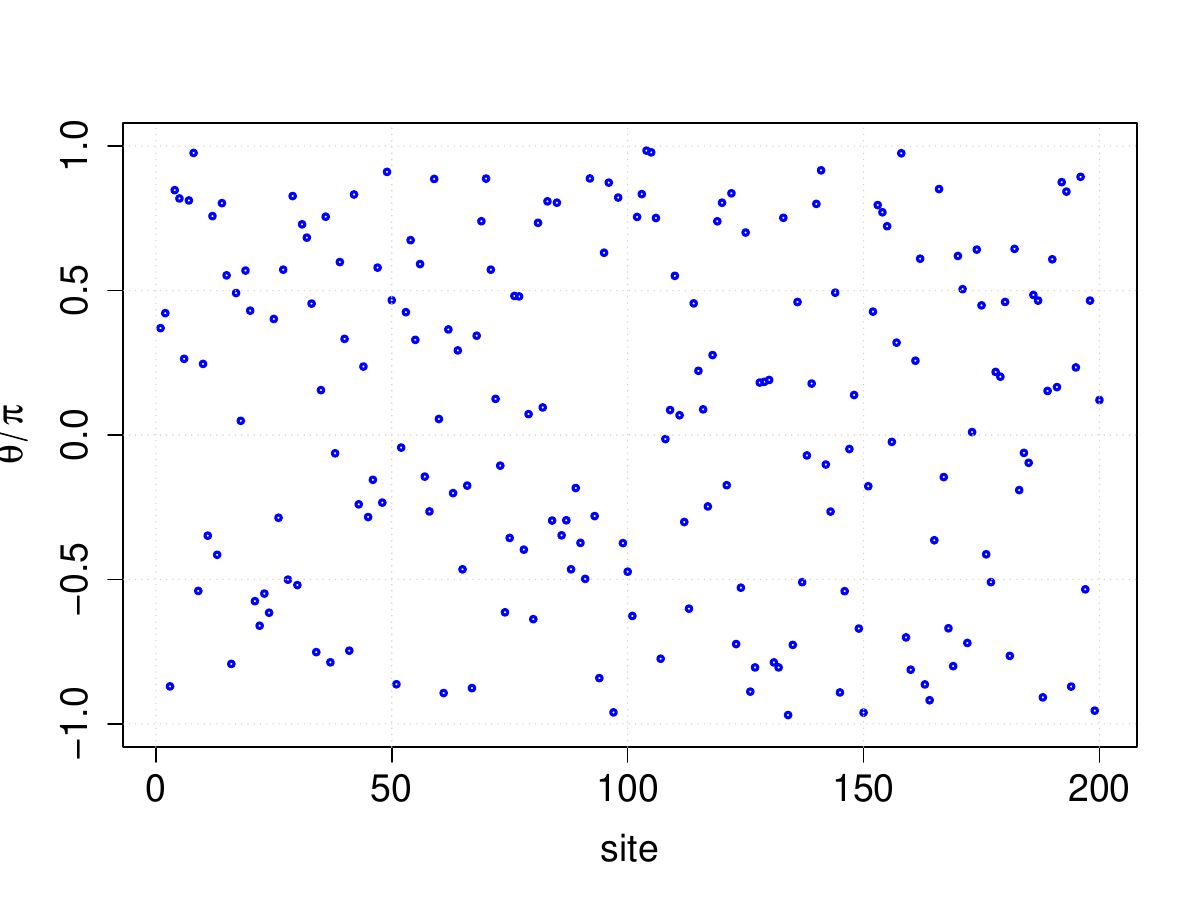}} 
            \subfigure[]{\includegraphics[width=0.44\textwidth]{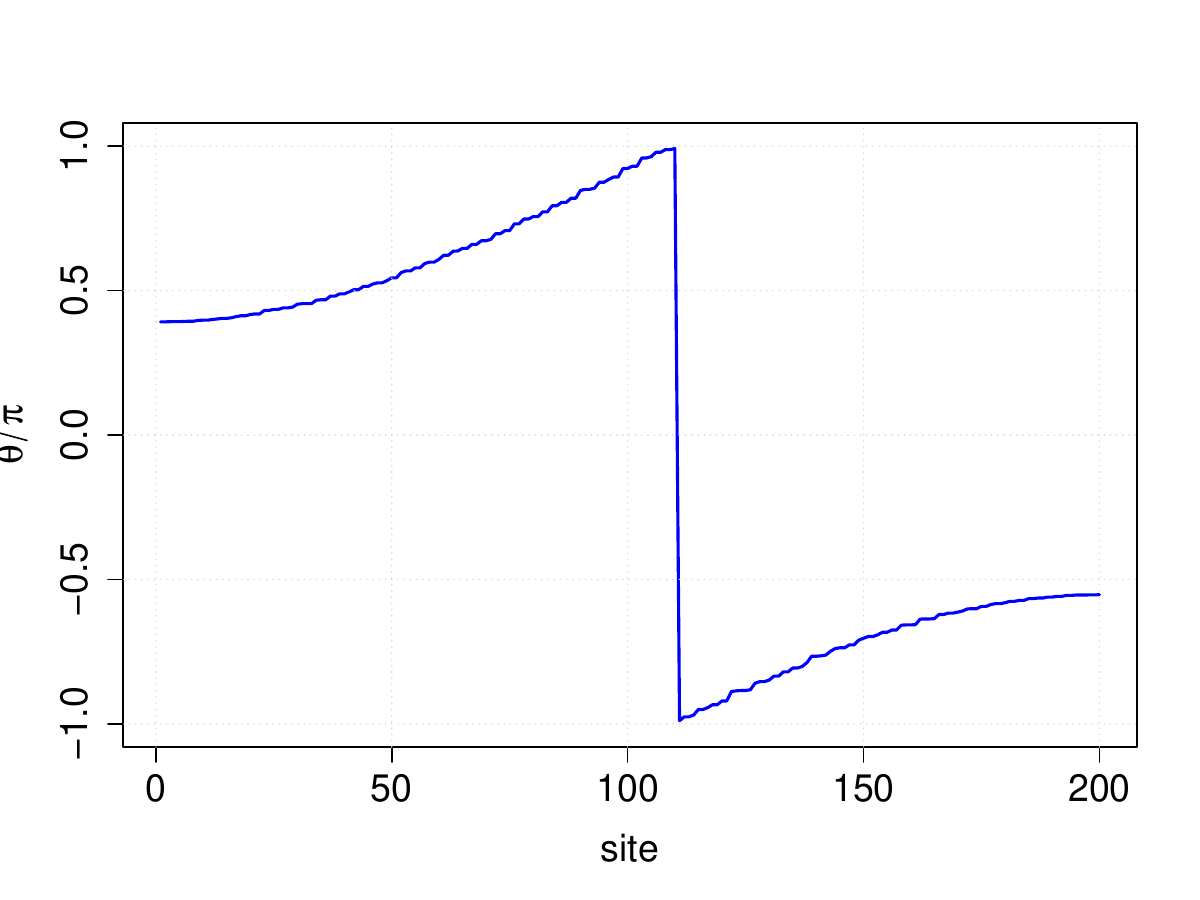}} 
            \subfigure[]{\includegraphics[width=0.44\textwidth]{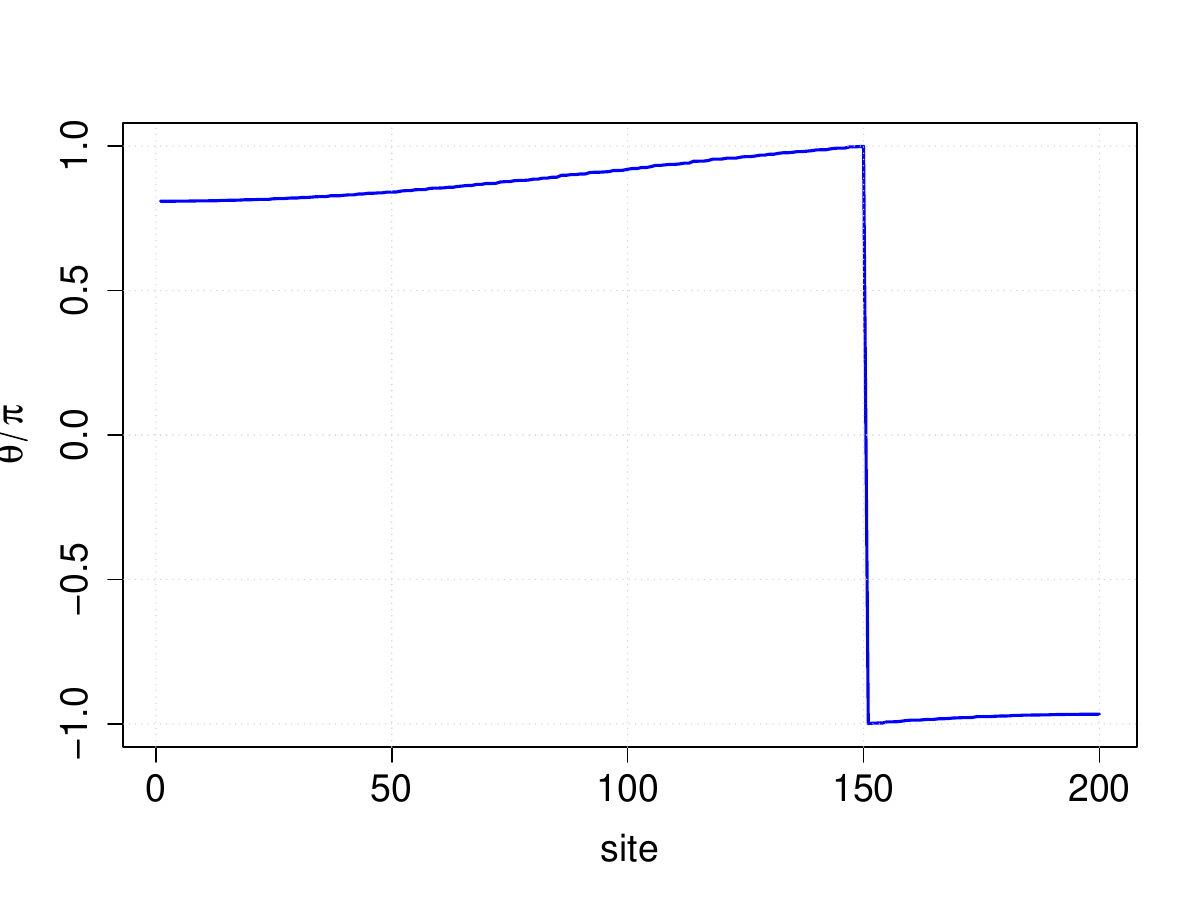}}
            \subfigure[]{\includegraphics[width=0.44\textwidth]{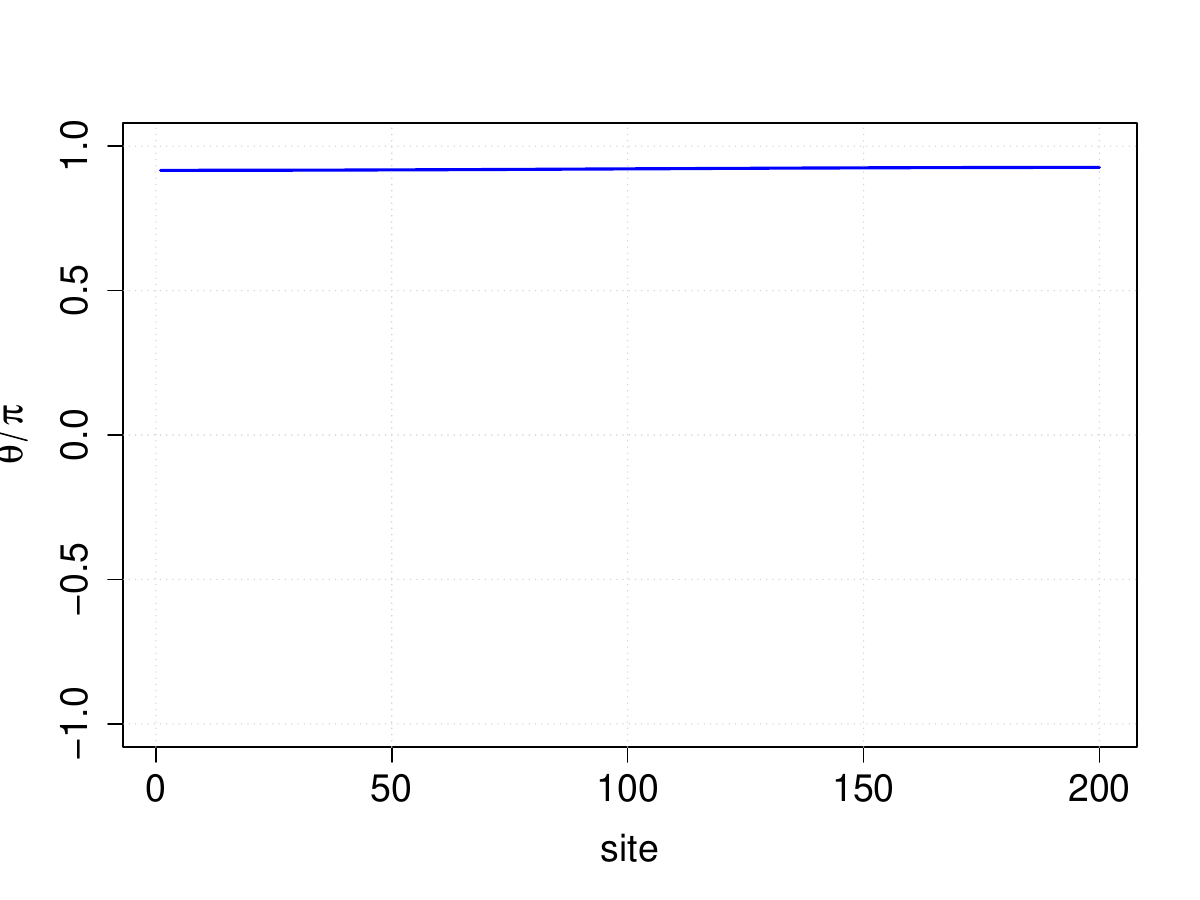}}
            
        \end{minipage}
    }
    \caption{Simulation for a system with $N=200$ with empty boundary conditions (a) is the starting opinion distribution (b) configuration at $t=2.5\cdot 10^7$ with temporary ladder structures (c) at $t=5\cdot 10^7$ only one global ladder is remaining (d) at $t=10^{8}$ the ladder is reduced}
    \label{fig:200_empty}
\end{figure}

\begin{figure}[H]
    \centering
    \scalebox{0.6}{%
        \begin{minipage}{\textwidth}
            \centering
            \subfigure[]{\includegraphics[width=0.49\textwidth]{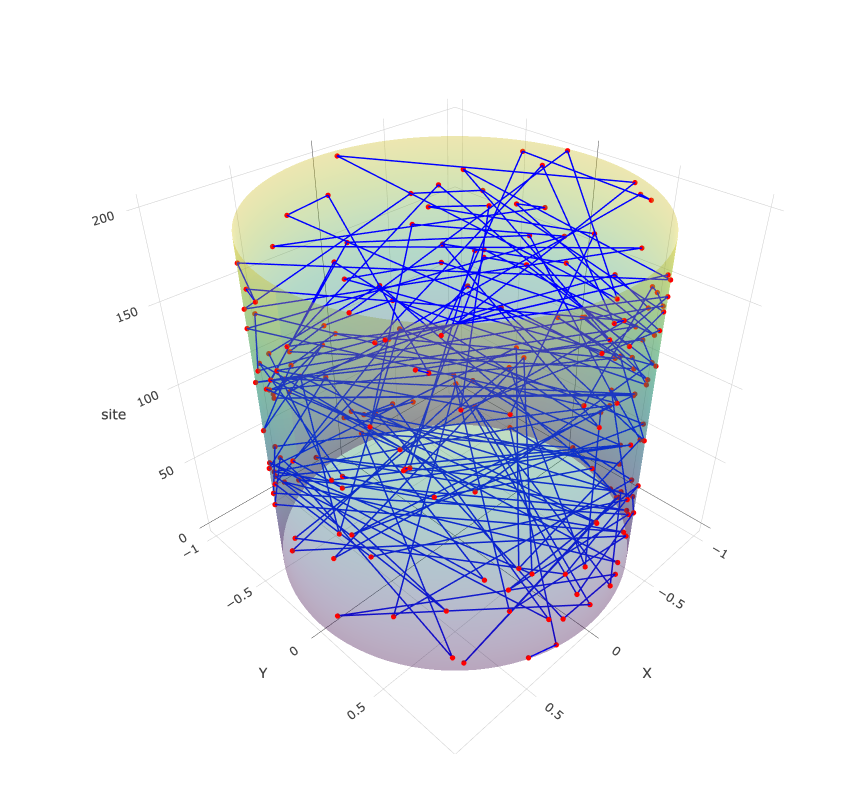}} 
            \subfigure[]{\includegraphics[width=0.49\textwidth]{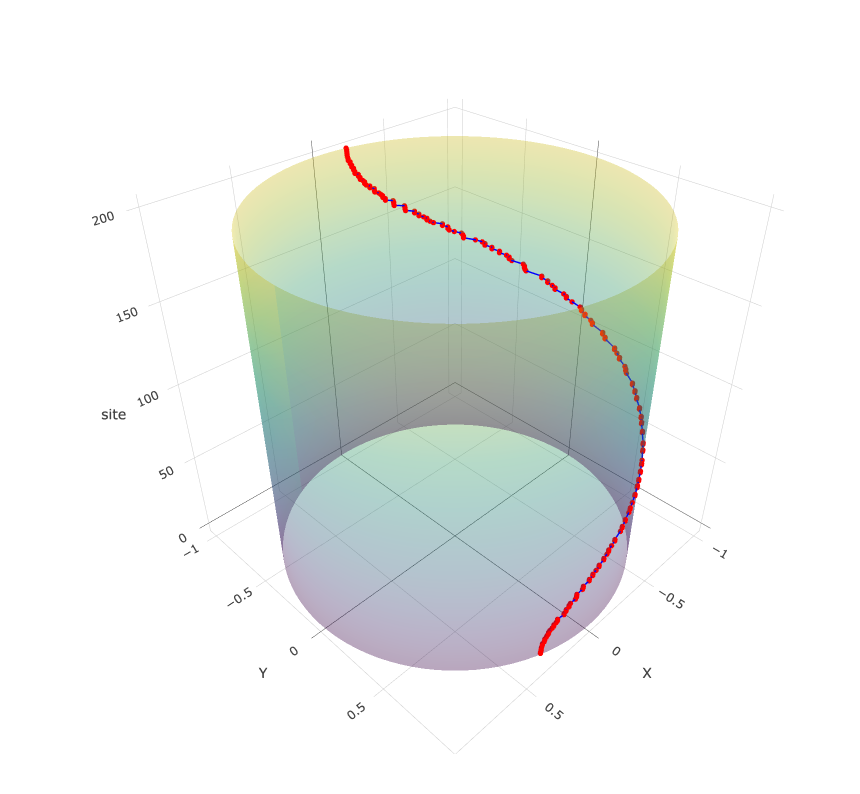}} 
            \subfigure[]{\includegraphics[width=0.49\textwidth]{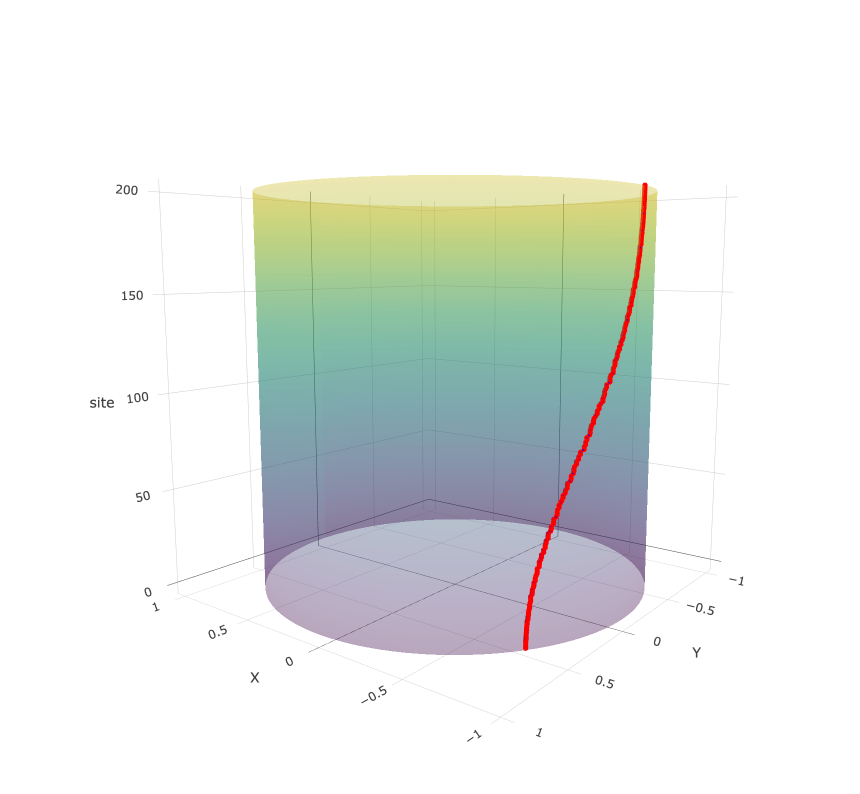}}
            \subfigure[]{\includegraphics[width=0.49\textwidth]{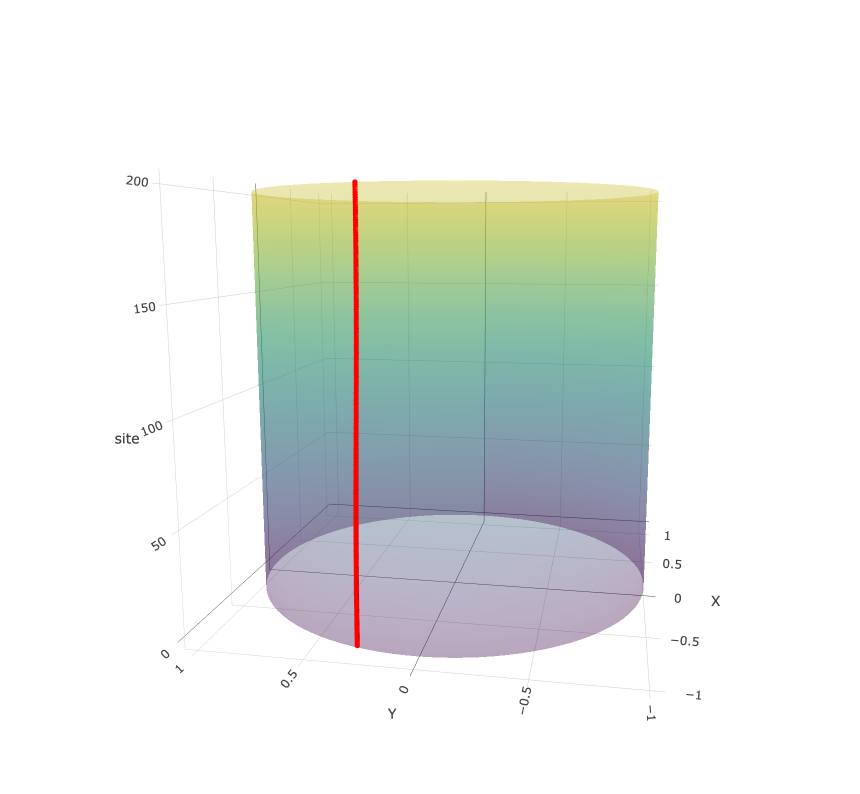}}
        \end{minipage}
    }
    \caption{Simulation for a system with $N=200$ with empty boundary condition as in Figure~\ref{fig:200_empty} in cylindrical coordinates}
    \label{fig:200_empty_cyl}
\end{figure}

% N=1000
\begin{figure}[H]
    \centering
    \scalebox{0.75}{%
        \begin{minipage}{\textwidth}
            \centering
            \subfigure[]{\includegraphics[width=0.44\textwidth]{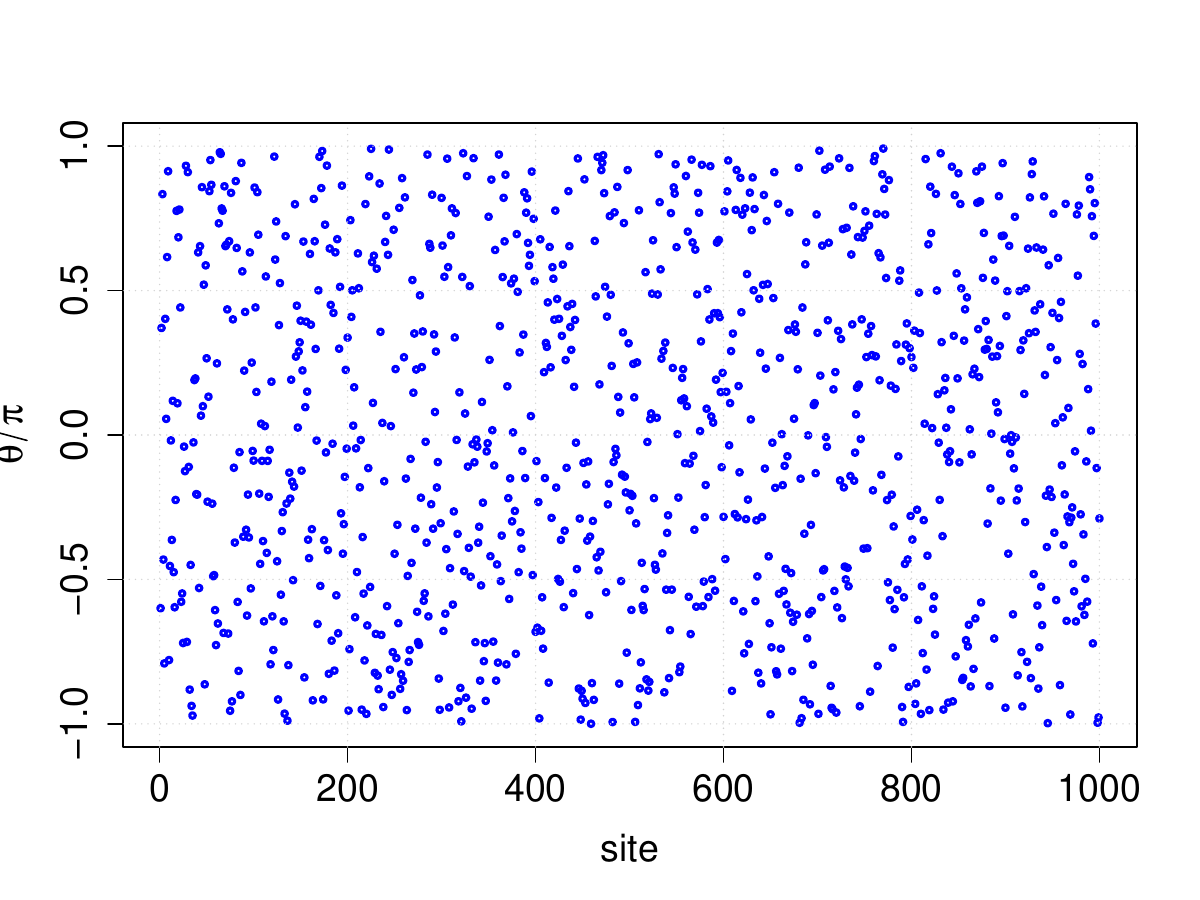}} 
            \subfigure[]{\includegraphics[width=0.44\textwidth]{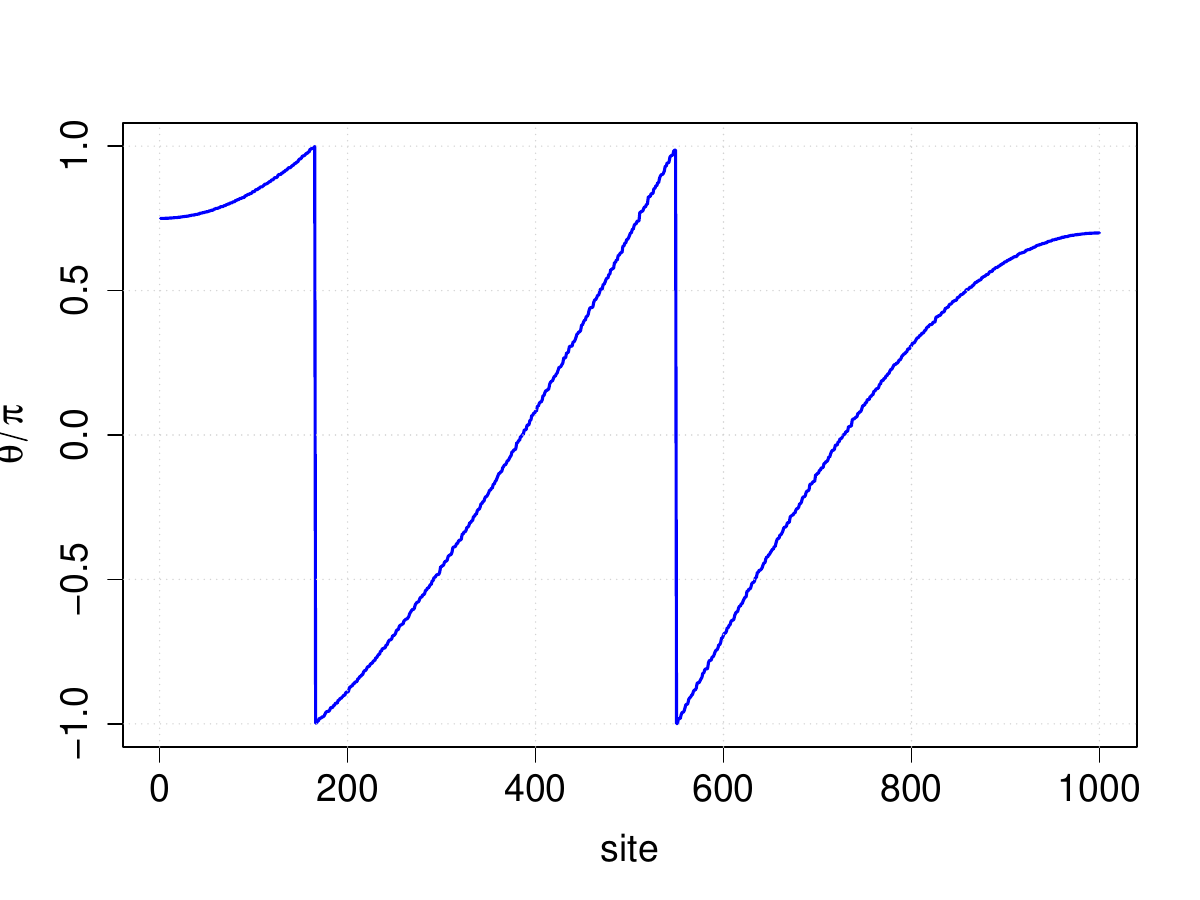}} 
            \subfigure[]{\includegraphics[width=0.44\textwidth]{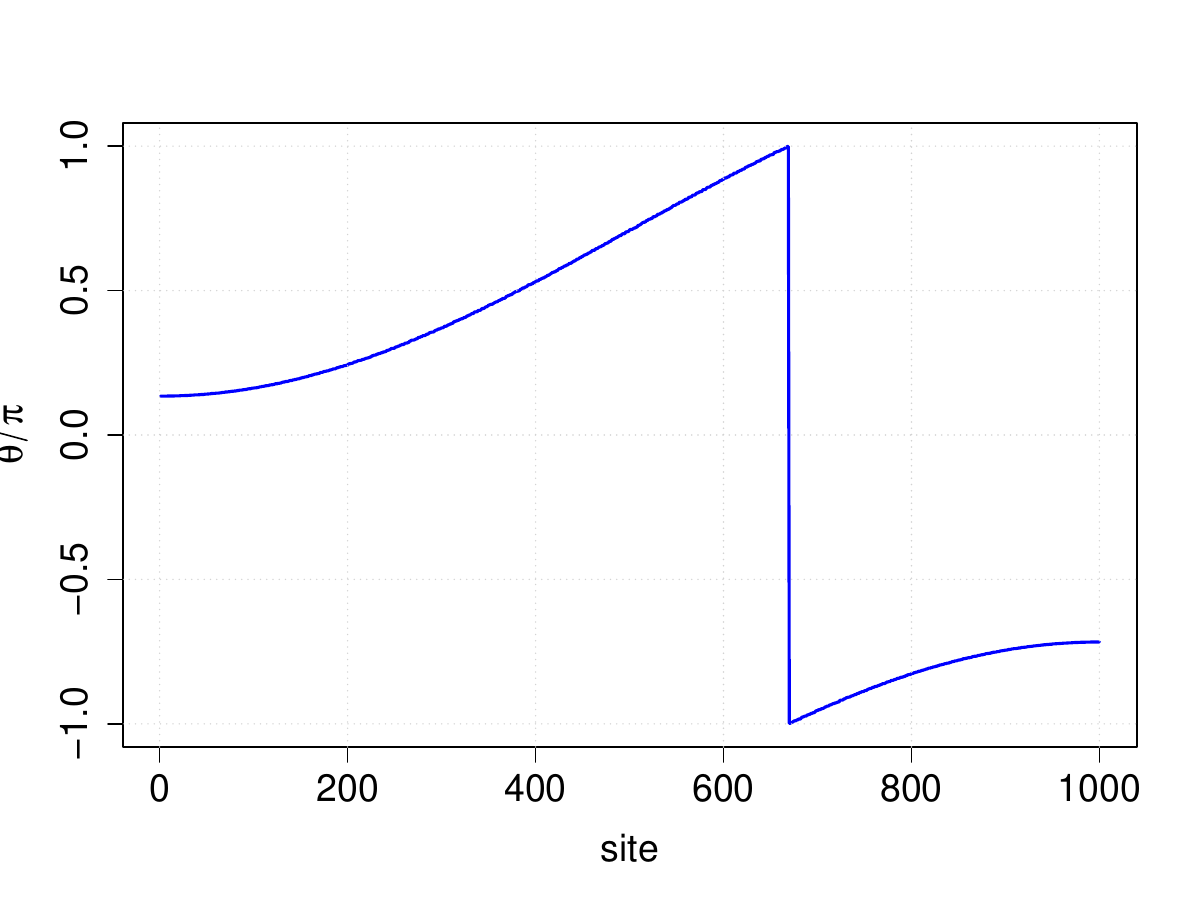}}
            \subfigure[]{\includegraphics[width=0.44\textwidth]{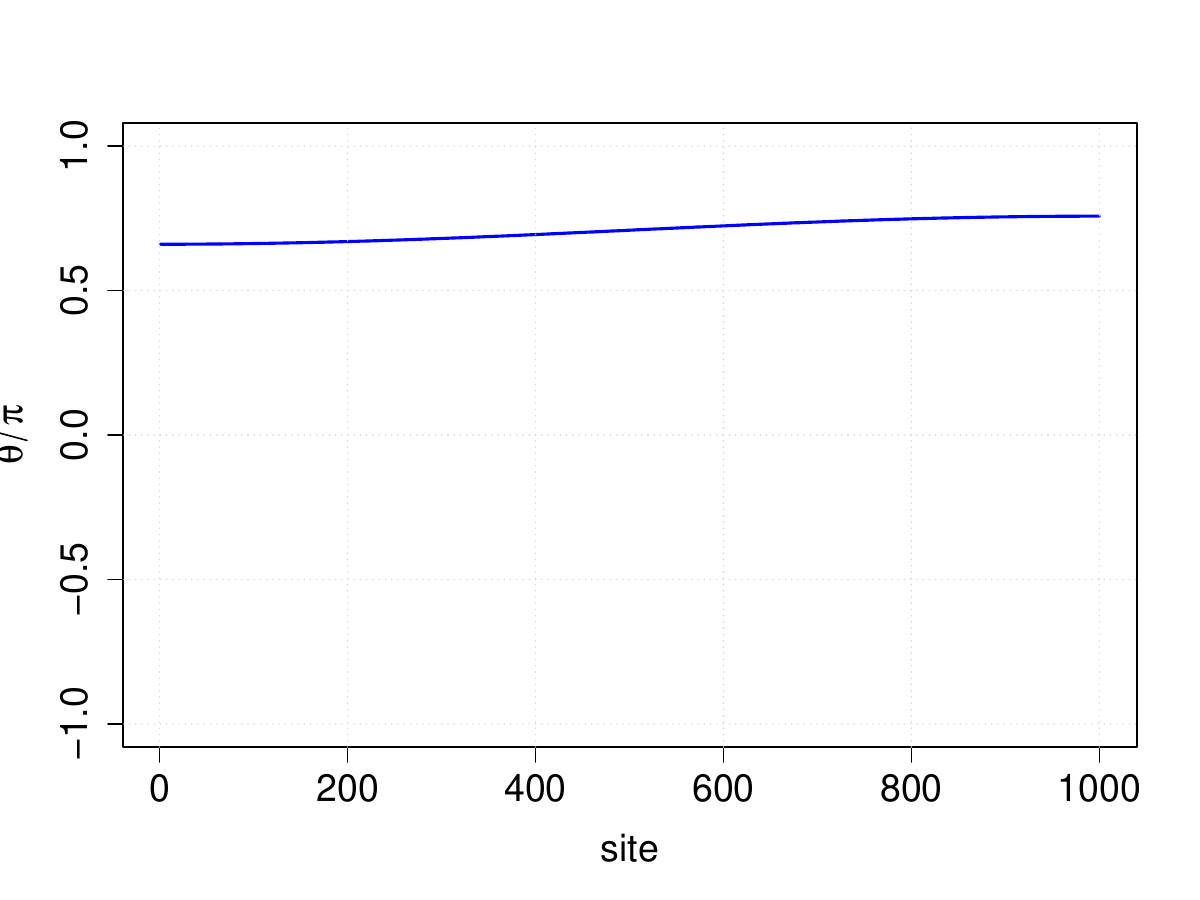}}
        \end{minipage}
    }
    \caption{Simulation for a system with $N=1000$ with empty boundary conditions (a) is the starting opinion distribution (b) configuration at $t=2.5\cdot 10^9$ with temporary ladder structures (c) at $t=5\cdot 10^9$ only one global ladder is remaining (d) at $t=10^{10}$ the ladder is reduced}
    \label{fig:sim5_empty}
\end{figure}
\begin{figure}[H]
    \centering
    \resizebox{0.6\textwidth}{!}{%
        \begin{minipage}{\textwidth}
            \centering
            \subfigure[]{\includegraphics[width=0.49\textwidth]{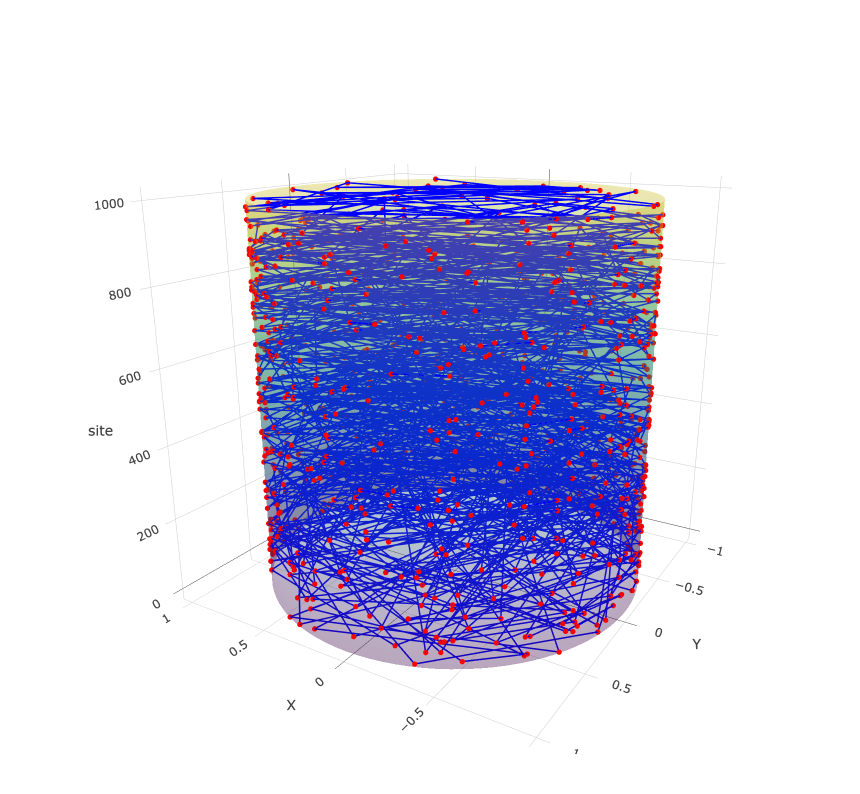}} 
            \subfigure[]{\includegraphics[width=0.49\textwidth]{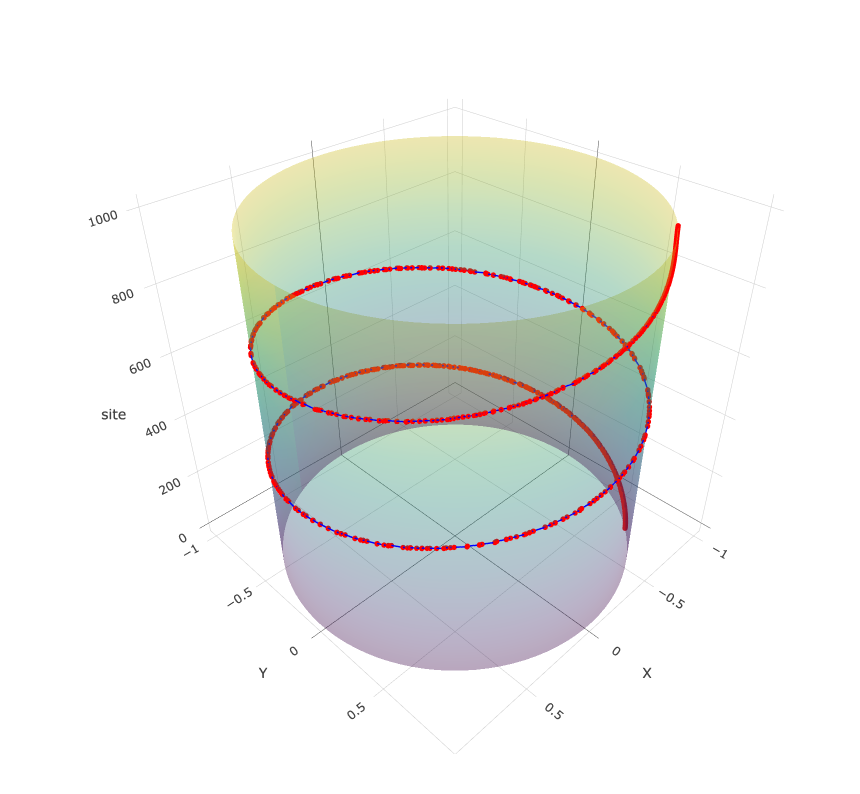}} 
            \subfigure[]{\includegraphics[width=0.49\textwidth]{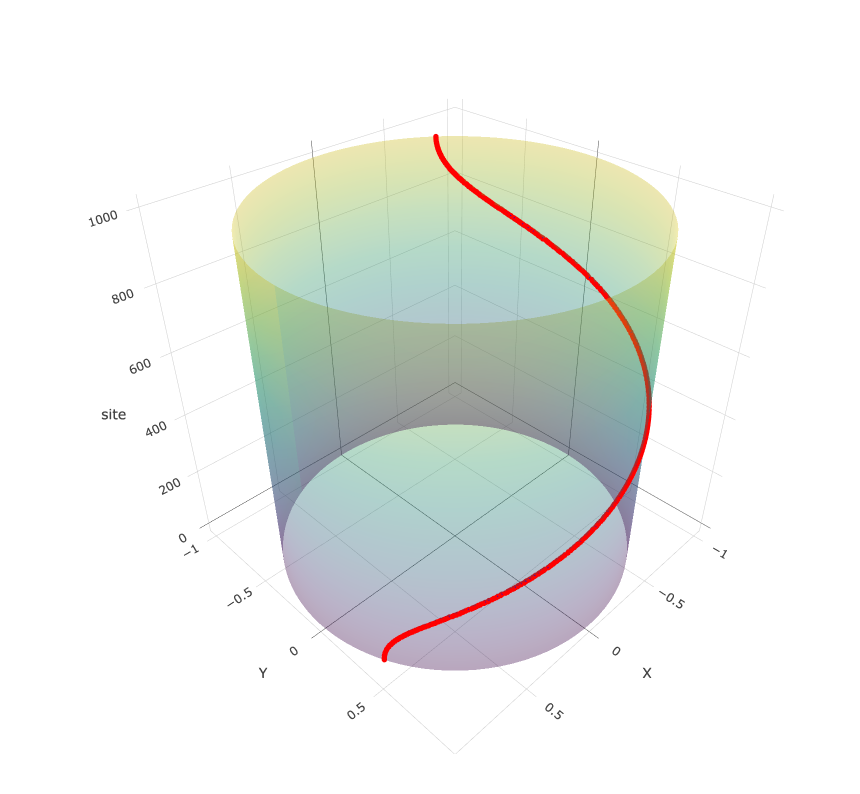}}
            \subfigure[]{\includegraphics[width=0.49\textwidth]{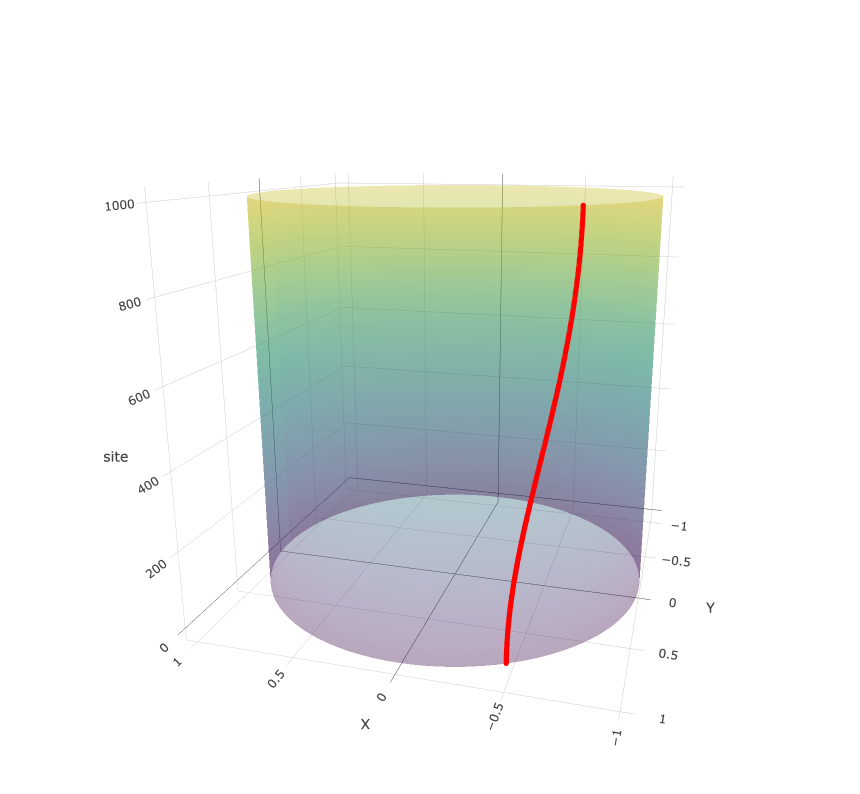}}
        \end{minipage}
    }
    \caption{Simulation for a system with $N=1000$ as in Figure~\ref{fig:sim5_empty} cylindrical coordinates}
    \label{fig:sim5_empty_cyl}
\end{figure}
%
%
%
%---------------------------------------------------
% SECTION 4: TRAPPING AROUND A WINDING CONFIGURATION
%----------------------------------------------------
%
%
%
%
\section{Trapping around a winding configuration}
\label{sec:meta}
Under periodic boundary conditions the circular geometry introduces a
global topological structure. The wrapped edge increments define an
integer winding number $W_t$ (see Equation \eqref{eq:winding}). Although $W_t$ may fluctuate at early
times (see Figure~\ref{fig:W_per}), we show that it can change only through \emph{branch--crossing}
events (Theorem~\ref{thm:W-conditional}). Once such events cease, the winding number freezes and the
dynamics becomes confined to a fixed winding sector. Within a given
sector the configuration relaxes toward a twisted profile determined
by the frozen winding number (Theorem~\ref{thm:adaptive-detrended} and Theorem~\ref{thm:closeness-winding}), producing long-lived transient states
before eventual convergence to consensus via a rare fluctuation, whose mechanism is illustrated in Section~\ref{sec:escape-heur}.
%
% SUBSECTION: Branching-crossing and winding number conservation
%
 \subsection{Branching-crossing and winding number conservation}
\label{sec:meta1}

Under periodic boundary conditions the wrapped edge increments define
a global quantity measuring the total rotation around the ring.  The
\emph{winding number} at time $t$ is
\begin{equation}\label{eq:winding}
W_t := W(\theta_t) = \frac{1}{2\pi}\sum_{i=1}^N \delta_t(i).
\end{equation}

Although $W_t$ is always an integer (see Remark~\ref{rem:wind}), it is not automatically conserved
by the dynamics. The following result shows that the winding number
can change only through \emph{branch--crossing} events, namely updates
in which a neighboring increment leaves the principal interval
$(-\pi,\pi)$. For generic initial data such events may occur during an
initial transient regime, when the increments $\delta_t(i)$ are still
large. However, the \texttt{ACCA} dynamics is locally contractive and
progressively smooths phase differences, making branch--crossings
increasingly unlikely.

\begin{theorem}[Branch-crossing and winding number]
\label{thm:W-conditional}
Let $\theta_t\in[-\pi,\pi)^N$ evolve by the \texttt{ACCA} dynamics with periodic boundary conditions.
Fix $t$ and suppose that edge $(k,k+1)$ is updated. Define the two sums:
\begin{equation}
\label{e:S}
\spm(k):=\delta_t(k\!\pm\!1)+\tfrac12\,\delta_t(k).
\end{equation}
Then
\[
W_{t+1}=W_t-(m_-+m_+),\qquad
\wrap(\spm(k))=\spm(k)-2\pi m_\pm,
\]
with suitable  $m_\pm\in\{-1,0,1\}$. In particular,
\[
W_{t+1}=W_t\quad\Longleftrightarrow\quad \spm(k) \in(-\pi,\pi).
\]
\end{theorem}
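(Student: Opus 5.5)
The plan is to compute directly how the wrapped increments change under one update of the edge $(k,k+1)$. Only the three increments $\delta(k-1)$, $\delta(k)$ and $\delta(k+1)$ can change, because all other vertices are untouched. We assume $N\ge3$, so that these three edges are distinct; the case $N=2$ has to be handled separately and is degenerate.

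\textbf{Step 1: the updated edge.} By Remark~\ref{rem:midpoint-correct}, $\theta_{t+1}(k)=\theta_{t+1}(k+1)$, hence $\delta_{t+1}(k)=0$.

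\textbf{Step 2: the left neighbour.} Using $\theta_{t+1}(k)=\wrap(\theta_t(k)+\tfrac12\delta_t(k))$ and property \eqref{eq:wrap1}, we get
\[
\delta_{t+1}(k-1)=\wrap\big(\theta_t(k)+\tfrac12\delta_t(k)-\theta_t(k-1)\big).
\]
Since $\theta_t(k)-\theta_t(k-1)=\delta_t(k-1)+2\pi\ell$ for some $\ell\in\mathbb Z$, and $\wrap$ is $2\pi$-periodic, this equals $\wrap(\sm(k))$.

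\textbf{Step 3: the right neighbour.} In the same way, $\theta_{t+1}(k+1)=\wrap(\theta_t(k+1)-\tfrac12\delta_t(k))$ gives
\[
\delta_{t+1}(k+1)=\wrap\big(\delta_t(k+1)+\tfrac12\delta_t(k)\big)=\wrap(\spp(k)).
\]

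\textbf{Step 4: summing.} Write $\wrap(\spm(k))=\spm(k)-2\pi m_\pm$ with $m_\pm\in\mathbb Z$, and note that $\sm(k)+\spp(k)=\delta_t(k-1)+\delta_t(k)+\delta_t(k+1)$. Then
\[
2\pi W_{t+1}=2\pi W_t-\big(\delta_t(k-1)+\delta_t(k)+\delta_t(k+1)\big)+\wrap(\sm(k))+\wrap(\spp(k))=2\pi W_t-2\pi(m_-+m_+).
\]

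\textbf{Step 5: range of $m_\pm$.} Since each increment lies in $[-\pi,\pi)$, we have $\spm(k)\in[-\tfrac32\pi,\tfrac32\pi)$. The wrap of a number in this interval removes at most one multiple of $2\pi$, so $m_\pm\in\{-1,0,1\}$.

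\textbf{Step 6: the equivalence.} This is the only nontrivial step. The implication from both sums lying in the principal range to $W_{t+1}=W_t$ is immediate. For the converse we must rule out a cancellation $m_-+m_+=0$ with $m_\pm\neq0$.
\begin{itemize}[leftmargin=*]
\item If $m_-=1$ and $m_+=-1$, then $\sm(k)\ge\pi$ and $\spp(k)<-\pi$. Subtracting gives $\delta_t(k-1)-\delta_t(k+1)>2\pi$. This is impossible, since $\delta_t(k-1)<\pi$ and $\delta_t(k+1)\ge-\pi$ force the difference to be $<2\pi$.
\item If $m_-=-1$ and $m_+=1$, then $\sm(k)<-\pi$ and $\spp(k)\ge\pi$. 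This gives $\delta_t(k+1)-\delta_t(k-1)>2\pi$, which is excluded by the same bounds.
\end{itemize}
Hence $W_{t+1}=W_t$ if and only if $m_-=m_+=0$, i.e.\ if and only if $\spm(k)\in[-\pi,\pi)$. The statement uses the open interval $(-\pi,\pi)$; I would remark that the boundary value $\spm(k)=-\pi$ gives $m_\pm=0$, so this endpoint should be read with the half-open convention of $\wrap$, or discarded as a null event for the dynamics.

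\textbf{The antipodal case $\delta_t(k)=-\pi$ needs separate care.} The two randomized midpoints $F^\pm_e$ differ by a rotation of both updated angles by $\pi$. For the branch given by \eqref{eq:update2}, Steps 1--6 apply verbatim. For the other branch, I would redo Steps 2--3 with $\delta_t(k)$ replaced by $+\pi$ in the definition of $\spm(k)$. Then $\spm(k)\in[-\pi/2,3\pi/2)$, the set of possible values of $m_\pm$ is unchanged, and the sum $\sm(k)+\spp(k)$ now exceeds $\delta_t(k-1)+\delta_t(k)+\delta_t(k+1)$ by $2\pi$. That extra $2\pi$ adds one unit to the change of $W$ and must be absorbed into the statement's convention.

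I expect the main obstacle to be this bookkeeping of wrap conventions: the half-open endpoint, and making the antipodal case consistent with the statement. The core identity itself is a short telescoping computation.
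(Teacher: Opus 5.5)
Your proposal is correct and follows the paper's own argument. It uses the same computation giving $\delta_{t+1}(k)=0$ and $\delta_{t+1}(k\pm1)=\wrap(\spm(k))$, the same telescoping of the total increment, and the same range bound giving $m_\pm\in\{-1,0,1\}$.

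Your refinements are also correct, and they make explicit points the paper's proof passes over, since it simply asserts $W_{t+1}=W_t\iff m_-=m_+=0$ and only treats the branch \eqref{eq:update2}. In particular, your Step~6 exclusion of the cancellation $m_-=-m_+\neq0$ is valid. So is your observation that $\spm(k)=-\pi$ gives $m_\pm=0$, which makes $\spm(k)\in[-\pi,\pi)$ the exact condition. Your remark that the second antipodal midpoint shifts $W$ by an extra unit is also right.
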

Hence, the winding number is conserved for \emph{all} times if and only if every update satisfies
$\spm\in(-\pi,\pi)$ (i.e., no branch-crossing occurs).
\begin{proof}
Let $d:=\delta_t(k)$. 
By Remark~\ref{rem:midpoint-correct}, the two updated angles coincide after wrapping, so that
$\theta_{t+1}(k)=\theta_{t+1}(k+1)$ and $ \delta_{t+1}(k)=\wrap(\theta_{t+1}(k+1)-\theta_{t+1}(k))=\wrap(0)=0$.

If $i\notin\{k-1,k,k+1\}$, then neither endpoint of the edge $(i,i+1)$ is updated, so that
$\delta_{t+1}(i)=\delta_t(i)$. It remains to compute the two neighbouring differences
$\delta_{t+1}(k-1)$ and $\delta_{t+1}(k+1)$.
For the left neighbour edge,
\[
\delta_{t+1}(k-1)=\wrap\!\big(\theta_{t+1}(k)-\theta_{t+1}(k-1)\big)
=\wrap\!\big(\theta_{t+1}(k)-\theta_t(k-1)\big).
\]
Using $\theta_{t+1}(k)=\wrap(\theta_t(k)+\tfrac12 d)$ and the identity \eqref{eq:wrap1},
we obtain:
\begin{equation}
\label{eq:deltat}
\delta_{t+1}(k-1)
=\wrap\!\Big(\theta_t(k)+\tfrac12 d-\theta_t(k-1)\Big).
\end{equation}
Since $\delta_t(k-1)=\wrap(\theta_t(k)-\theta_t(k-1))$ and by identity \eqref{eq:wrap1}, the relation \eqref{eq:deltat} becomes:
\[
\delta_{t+1}(k-1)=\wrap\!\big(\delta_t(k-1)+\tfrac12\,\delta_t(k)\big)=\wrap(\sm(k)).
\]
An analogous computation for $\delta_{t+1}(k+1)$ gives:
\begin{eqnarray*}
\delta_{t+1}(k+1)
&=&\wrap\!\big(\theta_t(k+2)-\theta_{t+1}(k+1)\big)
=\wrap\!\Big(\theta_t(k+2)-\theta_t(k+1)+\tfrac12 d\Big)\\
&=&\wrap\!\big(\delta_t(k+1)+\tfrac12\,\delta_t(k)\big)=\wrap(\spp(k)).
\end{eqnarray*}
Thus, we get:
\[
\delta_{t+1}(k)=0,\qquad
\delta_{t+1}(k\pm1)=\wrap(\spm(k)),\qquad
\delta_{t+1}(i)=\delta_t(i), \forall i\notin\{k-1,k,k+1\}.
\]
Summing over $i\in\{1,\dots,N\}$ the updated edge increments, we get: 
\[
\sum_{i=1}^N \delta_{t+1}(i)
=
\sum_{i=1}^N \delta_t(i)
-\delta_t(k-1)-\delta_t(k)-\delta_t(k+1)
+\wrap(\spm(k))+0+\wrap(\spp(k)).
\]
so that, by the definition \eqref{e:S}:
\begin{equation}
\label{e:deltaplus}
\sum_{i=1}^N \delta_{t+1}(i)
=
\sum_{i=1}^N \delta_t(i)
+\big(\wrap(\sm(k))-\sm(k)\big)+\big(\wrap(\spp(k))-\spp(k)\big).
\end{equation}
By definition, $\wrap(\spm(k))=\spm(k)-2\pi m_\pm$, for  $m_\pm\in\mathbb{Z}$.
Moreover, because each $\delta_t(\cdot)\in[-\pi,\pi)$, we have
$\spm(k)\in[-3\pi/2,3\pi/2)$, so bringing $\spm(k)$ back into $[-\pi,\pi)$ requires at most one shift by $\pm2\pi$; hence necessarily $m_\pm\in\{-1,0,1\}$.
Substituting into \eqref{e:deltaplus}, we have: 
\[
\sum_{i=1}^N \delta_{t+1}(i)
=
\sum_{i=1}^N \delta_t(i)-2\pi(m_-+m_+),
\]
and dividing by $2\pi$ gives $W_{t+1}=W_t-(m_-+m_+)$.
Finally, $W_{t+1}=W_t$ holds if and only if $m_-=m_+=0$, which is equivalent to $\wrap(\spm(k))=\spm(k)$.
Since $\wrap(x)=x$ exactly when $x\in[-\pi,\pi)$ (and the antipodal values $\pm\pi$ correspond to the branch-crossing ambiguity),
this is equivalent to $\spm(k)\in(-\pi,\pi)$, concluding the proof.
\end{proof}
The preceding theorem identifies branch-crossings as the only source
of variation of the winding number.
Whenever no update produces $\spm\notin(-\pi,\pi)$, the integers $m_\pm$
vanish and the topological charge $W_t$ is invariant.
Since the analysis below concerns time intervals during which no branch-crossing occurs, we restrict attention to trajectories satisfying this condition.
We formalize this in the following assumptions.

We introduce stopping times describing the first time at which the
dynamics leaves the regular regime. Let
$\mathcal F_t:=\sigma(\theta_0,e_{k_0},\dots,e_{k_{t-1})}$, where $e_j\in \mathcal{E}$, be the natural filtration generated by the initial configuration and the
sequence of updated edges.
Define the $(\mathcal F_t)_t$--stopping times
\begin{equation}
\label{eq:tau-branch}
\tau_{\mathrm{br}}
:=
\inf\Big\{t\ge0:\exists\,k\in V_N \text{ such that }
S^{(t)}_-(k)\notin(-\pi,\pi)\ \text{or}\ S^{(t)}_+(k)\notin(-\pi,\pi)\Big\},
\end{equation}
and
\begin{equation}
\label{eq:tau-ant}
\tau_{\mathrm{ant}}
:=
\inf\Big\{t\ge0:\exists\,k\in V_N \text{ such that }
\delta_t(k)=-\pi\Big\}.
\end{equation}

The stopping time $\tau_{\mathrm{br}}$ represents the first instant at
which a \emph{branch--crossing} becomes possible, while
$\tau_{\mathrm{ant}}$ is the first time at which an antipodal edge
appears. We then define the stopping time
\begin{equation}
\label{eq:tau}
\tau := \tau_{\mathrm{br}}\wedge\tau_{\mathrm{ant}} .
\end{equation}

Before time $\tau$ all local increments remain in the principal interval
and every midpoint update is uniquely defined. In particular one has
\[
S^{(s)}_\pm(k)\in(-\pi,\pi)
\quad\text{and}\quad
\delta_s(k)\neq -\pi
\qquad
\text{for all } s<\tau \text{ and all edges } (k,k+1).
\]
It is therefore convenient to formulate several local identities and lift
properties up to the stopping time $\tau$.

\begin{assumption}[Analysis before the first singular event]
\label{asmp:no-antipodes}
Let $(\theta_t)_{t\ge0}$ be the \texttt{ACCA} process and let $\tau$ be the
stopping time defined in \eqref{eq:tau}.
We assume that statements involving the configuration at time $t$
are understood to hold on the event
\[
\{\tau \ge t\}.
\]
Equivalently, we restrict attention to trajectories for which no
branch--crossing or antipodal update has occurred before time $t$.
\end{assumption}

\begin{assumption}[No branch--crossing regime]
\label{asmp:tau-infty}
In addition to Assumption~\ref{asmp:no-antipodes}, we further restrict attention to trajectories of
$(\theta_t)_t$ for which
\[
\{\tau=\infty\}.
\]
Equivalently,
\[
S^{(t)}_\pm(k)\in(-\pi,\pi)
\quad\text{and}\quad
\delta_t(k)\neq -\pi
\qquad
\text{for all } t\ge0 \text{ and all edges } (k,k+1).
\]
\end{assumption}
\begin{remark}[A sufficient condition preventing branch--crossings]
\label{rem:corridor}
A simple deterministic condition ensures that no branch--crossing can occur at time $t$.
Suppose that at time $t$ we have:
\begin{equation}
\max_{1\le i\le N} |\delta_t(i)| < \frac{2\pi}{3}.
\end{equation}
By the triangle inequality, for every edge $(k,k+1)$ one has:
\[
|S_\pm^{(t)}(k)|
\le |\delta_t(k\pm1)|+\tfrac12|\delta_t(k)|
\le \max_i|\delta_t(i)|+\tfrac12\max_i|\delta_t(i)|
=\tfrac32\max_i|\delta_t(i)|
<\pi.
\]
Hence $S_\pm^{(t)}(k)\in(-\pi,\pi)$ for every $k$, so no update at time $t$
can produce a branch--crossing.
\end{remark}

\begin{figure}[H]
    \centering
\includegraphics[height=7cm]{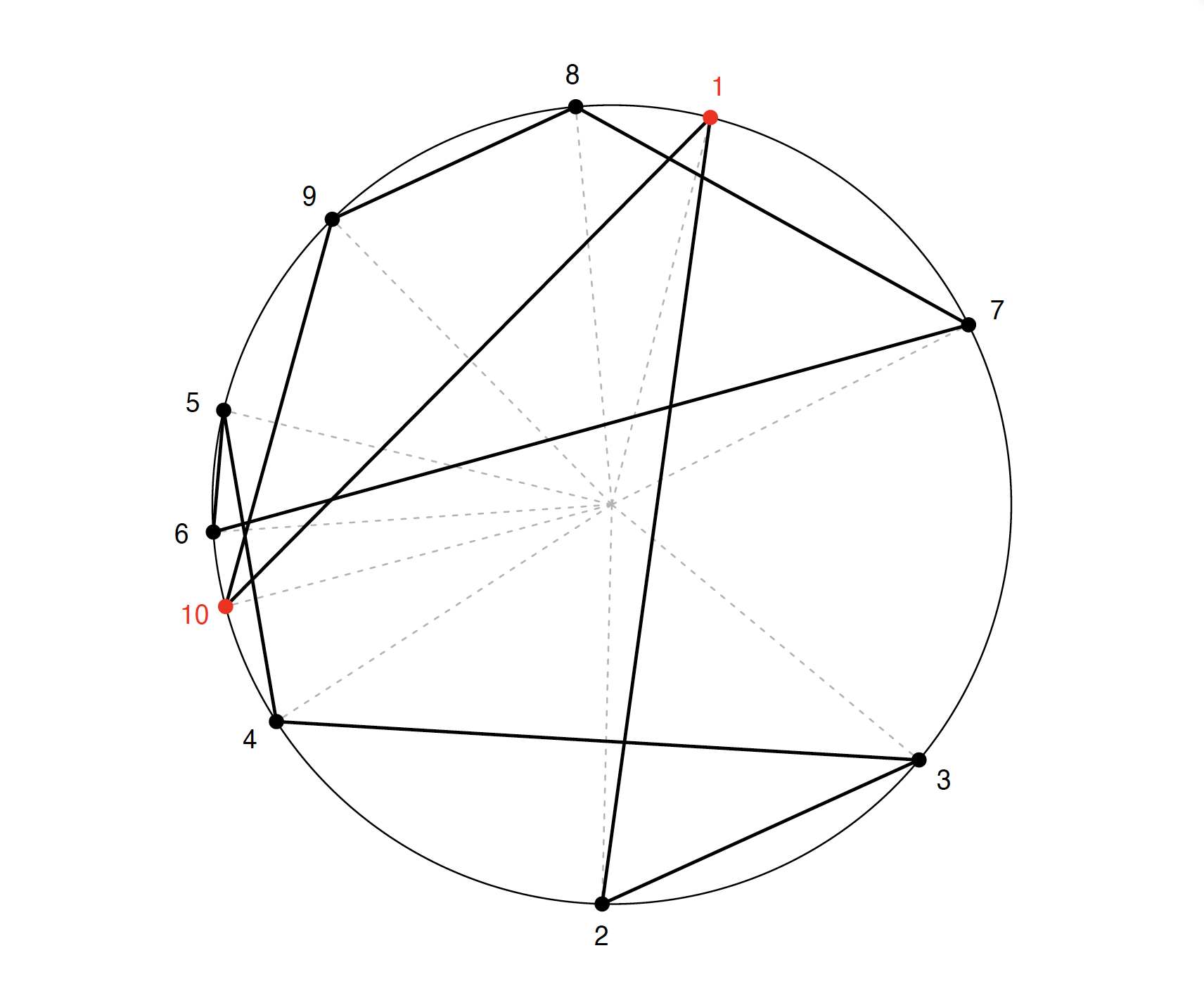}
    \caption{Example of a random configuration of $N=10$ angles on the circle, illustrating explicitly the mechanism of branch-crossing.
Each point represents an angle $\theta(i)\in[-\pi,\pi)$, and dashed
rays indicate the corresponding central angles. Only nearest–neighbour
edges $(k,k+1)$ of the ring graph are drawn.
A vertex is colored in red if at least one of the quantities
$S_-(k)$ or
$S_+(k)$ lies outside the interval
$(-\pi,\pi)$.
For the vertex $1$, we have $S_-(1)=-3.54; S_+(1)=0.58$, while for the vertex $10$ we have $S_-(10)=0.02; S_+(10)=3.96$. Hence the edge $(10,1)$ does not fulfill Assumption~\ref{asmp:no-antipodes}.
}
    \label{fig:crossing}
\end{figure}

Assumption~\ref{asmp:no-antipodes} requires that no update produces a
branch-crossing. A priori, this condition is not automatic: for generic
initial data it may fail already at the very first step. In particular,
if the initial configuration consists of independent uniformly
distributed angles on $[-\pi,\pi)$, then the wrapped differences
$\delta_0(i)$ are typically of order one, and a midpoint update may
push a neighboring difference beyond the interval $(-\pi,\pi)$.
The following lemma quantifies this phenomenon by computing explicitly
the probability that a branch-crossing occurs at the first update.
It shows that Assumption~\ref{asmp:no-antipodes} excludes an event of
strictly positive probability.
\begin{lemma}
\label{lem:first-crossing-probability}
Let $\theta_0\in\Omega_N$ be such that  $\theta_0(i)\overset{i.i.d.}{\sim}\mathrm{Unif}([-\pi,\pi))$, for $i\in V_N$, and fix an edge $(k,k+1)\in \mathcal{E}_p$.  
Then the probability that a midpoint update of $\theta_0$ at edge $(k,k+1)$  produces a \emph{branch-crossing} at time $1$ (i.e., $S_{+}^{(0)}(k)\notin (-\pi,\pi)$ or $S_{-}^{(0)}(k)\notin (-\pi,\pi)$) equals $\frac{11}{48}$.
\end{lemma}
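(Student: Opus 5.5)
The approach is to reduce the event to a statement about three independent uniform random variables and then integrate directly. By Theorem~\ref{thm:W-conditional} and the definition \eqref{e:S}, the event in question depends only on the three wrapped increments $\delta_0(k-1),\delta_0(k),\delta_0(k+1)$. These involve the four vertices $k-1,k,k+1,k+2$.

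\emph{Step 1: independence and uniformity of the increments.} I will first show that, for $N\ge4$, these four vertices are distinct. In that case $\delta_0(k-1),\delta_0(k),\delta_0(k+1)$ are i.i.d.\ $\mathrm{Unif}([-\pi,\pi))$. Condition on $\theta_0(k-1)$. The map
\[
(\theta(k),\theta(k+1),\theta(k+2))\mapsto(\delta(k-1),\delta(k),\delta(k+1))
\]
is a bijective translation-type map of the torus $(\mathbb S^1)^3$, in triangular form: each coordinate is a wrapped difference with the previous angle. It therefore preserves Haar (uniform) measure. Hence the increments are i.i.d.\ uniform, independently of $\theta_0(k-1)$.

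This is the step I expect to require care. For $N=3$ one has $k+2\equiv k-1$, so the three increments sum to a multiple of $2\pi$ and are not independent. I would therefore state the lemma for $N\ge4$, which is the regime of interest, and flag $N=3$ separately.

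\emph{Step 2: reduction to the unit cube.} Normalize $x=\delta_0(k-1)/\pi$, $y=\delta_0(k)/\pi$ and $z=\delta_0(k+1)/\pi$, which are i.i.d.\ uniform on $[-1,1)$. The complement of the branch-crossing event is
\[
\{|x+y/2|<1\}\cap\{|z+y/2|<1\}.
\]
Conditionally on $y$, these two events are independent, because $x$ and $z$ are independent of each other and of $y$.

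\emph{Step 3: computation.} Fix $y$ and suppose $y\ge0$; the case $y<0$ is symmetric. Then $x+y/2\notin(-1,1)$ exactly when $x\in[1-y/2,1)$, a set of length $|y|/2$ inside an interval of length $2$. Hence
\[
\mathbb P\big(|x+y/2|\ge1\mid y\big)=\frac{|y|}{4},
\]
and the same holds for $z$. It follows that
\[
\mathbb P(\text{no crossing})=\mathbb E\Big[\big(1-\tfrac{|y|}{4}\big)^2\Big],
\]
where $|y|$ is uniform on $[0,1]$. Expanding the square gives
\[
1-\tfrac{2}{4}\cdot\tfrac12+\tfrac1{16}\cdot\tfrac13=1-\tfrac14+\tfrac1{48}=\tfrac{37}{48}.
\]
Therefore the branch-crossing probability equals $1-\tfrac{37}{48}=\tfrac{11}{48}$. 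The boundary values $\pm\pi$ have probability zero, so whether they are counted as crossings does not affect the result.
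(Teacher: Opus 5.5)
Your proof is correct and is essentially the paper's argument: condition on the middle increment $\delta_0(k)=y$, use that $\delta_0(k\pm1)$ are conditionally independent and uniform, get $\bigl(1-\tfrac{|y|}{4\pi}\bigr)^2$ as the no-crossing probability and integrate to $\tfrac{37}{48}$. The only difference is how the increment law is obtained: you get joint i.i.d.\ uniformity of the three increments in one step from a unimodular triangular change of variables on the torus, whereas the paper computes the triangular density of $\theta_0(k+1)-\theta_0(k)$, wraps it, and then conditions on $(\theta_0(k),\theta_0(k+1))$. Your restriction to $N\ge4$ is well founded, because the paper's proof silently uses that $\theta_0(k-1)$ and $\theta_0(k+2)$ are distinct independent angles; for $N=3$ one has $S_-^{(0)}(k)+S_+^{(0)}(k)=2\pi W_0$, and the crossing probability is $\mathbb P(W_0\neq0)=\tfrac14$ rather than $\tfrac{11}{48}$.
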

The proof of Lemma~\ref{lem:first-crossing-probability} is given in Appendix~\ref{a:first}.
Figure~\ref{fig:histo-assumption} shows a Monte Carlo verification of
Lemma~\ref{lem:first-crossing-probability}, illustrating that the empirical
frequency of configurations satisfying Assumption~\ref{asmp:no-antipodes} concentrates
around the theoretical value $37/48$.
\begin{figure}[H]
    \centering
    \includegraphics[width=0.7\textwidth]{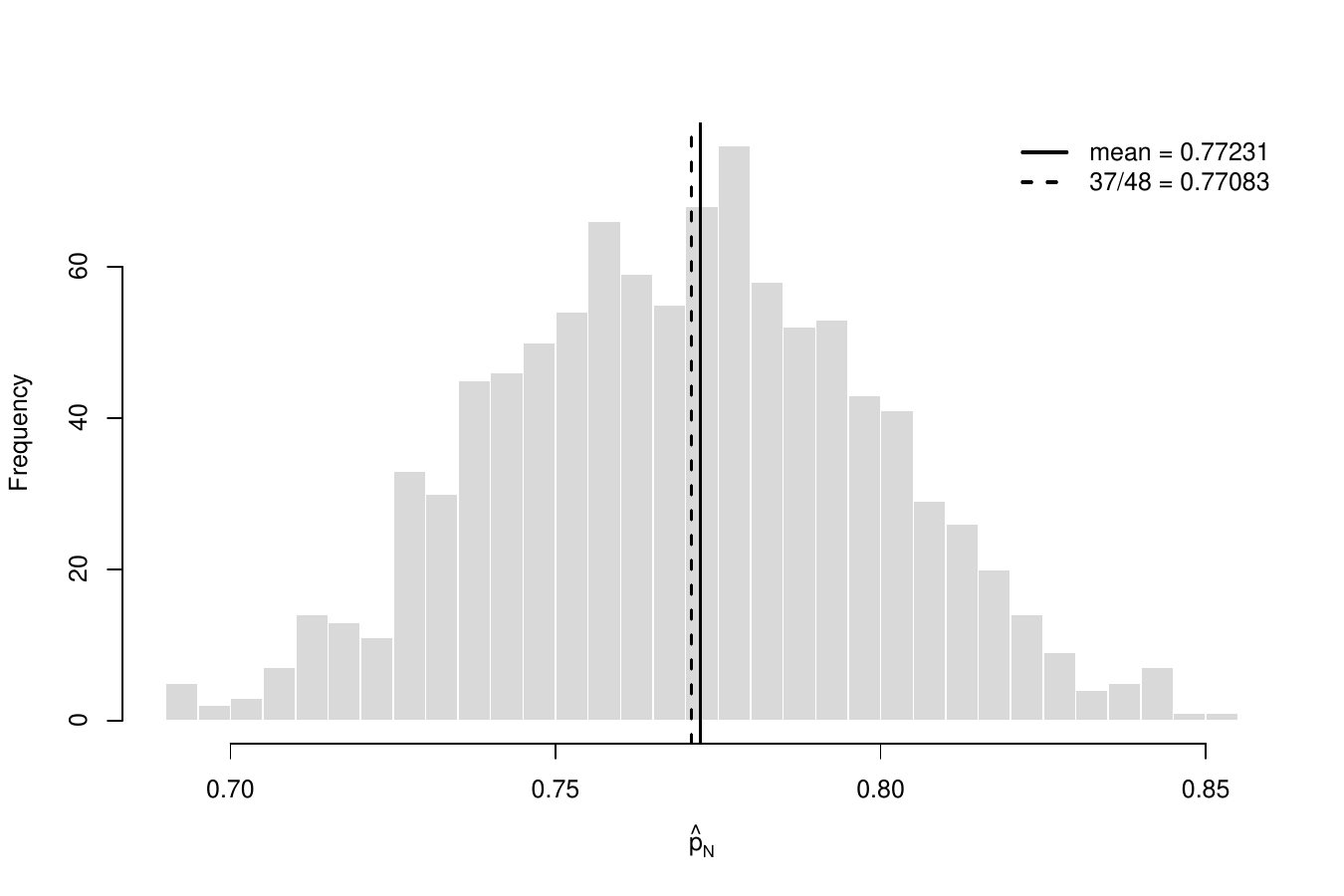}
    \caption{
    Simulation verification of Assumption~\ref{asmp:no-antipodes}.
    For each repetition we generate $N=4000$ i.i.d.\ initial angles 
    $\theta_0(i)\sim \mathrm{Unif}([-\pi,\pi))$ on the ring.
    From this configuration we sample $m=200$ edges uniformly at random
    and check, for each sampled edge $k$, whether Assumption~\ref{asmp:no-antipodes}
     is satisfied.
    We report the empirical fraction $\hat{P}_N$ of edges fulfilling the assumption.
    The histogram shows the distribution of $\hat{P}_N$
    over $R=1000$ independent repetitions.
    The solid vertical line marks the sample mean,
    while the dashed line indicates the theoretical value
    $37/48$ obtained in Lemma~\ref{lem:first-crossing-probability}.
    The numerical agreement confirms the analytical computation.
    }
    \label{fig:histo-assumption}
\end{figure}
%------------------------------------------------------------
%SUBSECTION: lifted representation of circular configurations
%------------------------------------------------------------
\subsection{Lifted representation of configurations on $(V_N,\mathcal{E}_p)$}
\label{sec:lift}

The dynamics studied in this paper take place on the circle
$\mathbb S^1$ which is isomorphic to $[-\pi,\pi)$, where angles are represented in the interval
$[-\pi,\pi)$ and local updates are defined through wrapped
differences between neighboring opinions.
While this representation is natural for the original process,
the presence of the wrapping operator complicates the analysis,
since midpoint updates are not linear in the angular coordinates.

To overcome this difficulty, we introduce a lifted representation
of configurations on the universal covering space $\R$ of the
circle.
In this representation, circular increments become ordinary
differences in $\R$, and midpoint updates correspond to
arithmetic midpoints of lifted endpoints (See Lemma~\ref{lem:lift-midpoint}).
This linear structure will play a key role in the analysis of the
dynamics and in the construction of the detrended variables used
in Section~\ref{sec:meta0}.

The lifting procedure associates to each circular configuration
a real-valued profile whose increments coincide with the wrapped
angular increments.

\begin{definition}[Lift of a configuration]
\label{def:lift}
Let $\theta\in\Omega_N$ be a configuration defined on $(V_N, \mathcal{E}_p)$ with $(\delta(i))_i$ its wrapped
increments defined in \eqref{e:deltae}.
A \emph{lift} of $\theta$ is any vector
\[
\eta=(\eta(1),\dots,\eta(N+1))\in\R^{N+1}
\]
such that
\begin{equation}
\label{eq:lift-inc}
\eta(i+1)-\eta(i)=\delta(i),
\qquad i=1,\dots,N,
\end{equation}
and
\begin{equation}
\label{eq:lift-anchor}
\wrap(\eta(1))=\theta(1).
\end{equation}
\end{definition}

\begin{remark}[Closing relation]
\label{rem:closing}
Summing \eqref{eq:lift-inc} yields
\begin{equation}
\label{eq:closing}
\eta(N+1)=\eta(1)+\sum_{i=1}^N\delta(i)
        =\eta(1)+2\pi W(\theta),
\end{equation}
with $W(\theta)$ defined in \eqref{e:w}.
Thus the lifted path accumulates exactly the winding number
of the configuration.
\end{remark}

\begin{remark}[Geometric interpretation]
Components $1$ and $N+1$ represent the same physical agent after
one complete traversal of the ring in the covering space.
Indeed
\[
\wrap(\eta(N+1))=\wrap(\eta(1)+2\pi W(\theta))=\theta(1).
\]
Hence, the closing edge $(N,1)$ of the ring corresponds in the
lifted space to the segment joining $\eta(N)$ and $\eta(N+1)$.
In particular,
\[
\delta(N)=\eta(N+1)-\eta(N).
\]
\end{remark}

Let $(\theta_t)_{t\ge0}$ be a trajectory of the \texttt{ACCA}
dynamics and denote by $\delta_t(i)$ the wrapped increments at
time $t$.
Recall the stopping time $\tau$ defined in \eqref{eq:tau}.
On the event $\{t<\tau\}$ no branch--crossing occurs and midpoint
updates are well defined.

\begin{definition}[Lifted trajectory]
\label{def:lifted-process}
Fix an initial lift $\eta_0$ of $\theta_0$.
For $t<\tau$ construct $\eta_{t+1}$ recursively as follows. Suppose the edge $(k_t,k_t+1)$ is updated at time $t$.
Define the lifted midpoint
\[
m_t:=\frac{\eta_t(k_t)+\eta_t(k_t+1)}{2}.
\]
Set
\[
\eta_{t+1}(k_t)=\eta_{t+1}(k_t+1):=m_t .
\]
The remaining coordinates are determined by the increment
relations
\begin{equation}
\label{eq:lift-rec}
\eta_{t+1}(i+1)-\eta_{t+1}(i)=\delta_{t+1}(i),
\qquad i=1,\dots,N .
\end{equation}
The resulting process $(\eta_t)_{t\ge0}$ is called the
\emph{lifted representation} of the trajectory $(\theta_t)_{t\geq0}$.
\end{definition}
We have then the following projection property:
\begin{lemma}[Projection property of a lift]
\label{lem:lift-projection}
Let $\theta\in\Omega_N$ and let $\eta\in\R^{N+1}$ be its corresponding lift introduced in Definition~\ref{def:lift}. Then
\[
\wrap(\eta(i))=\theta(i),
\qquad i=1,\dots,N+1,
\]
where for $i=N+1$ the right-hand side is interpreted as $\theta(1)$.
\end{lemma}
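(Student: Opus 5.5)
The plan is to argue by induction on $i$ from the anchor condition \eqref{eq:lift-anchor}. Everything rests on two properties of the wrap operator. First, $\wrap(x+2\pi\ell)=\wrap(x)$ for every $\ell\in\mathbb Z$, which holds because the floor term in \eqref{e:wrap} shifts by exactly $\ell$. Second, $\wrap(x)-x\in2\pi\mathbb Z$ for all real $x$, which follows directly from the definition.

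For the base case $i=1$, the claim is exactly \eqref{eq:lift-anchor}. For the inductive step, suppose $\wrap(\eta(i))=\theta(i)$ for some $1\le i\le N$. By \eqref{eq:lift-inc} we have $\eta(i+1)=\eta(i)+\delta(i)$. By the definition \eqref{e:deltae}, $\delta(i)=\theta(i+1)-\theta(i)-2\pi k_i$ for some $k_i\in\mathbb Z$, as already noted in Remark~\ref{rem:wind}. Writing $\eta(i)=\theta(i)+2\pi\ell_i$ with $\ell_i\in\mathbb Z$, which is possible by the induction hypothesis and the second property above, we obtain
\[
\eta(i+1)=\theta(i+1)+2\pi(\ell_i-k_i).
\]
Applying $\wrap$ and using the first property gives $\wrap(\eta(i+1))=\wrap(\theta(i+1))$. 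This equals $\theta(i+1)$ because $\theta(i+1)\in[-\pi,\pi)$, and $\wrap$ is the identity on $[-\pi,\pi)$: for $x\in[-\pi,\pi)$ we have $\lfloor (x+\pi)/2\pi\rfloor=0$.

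The step $i=N\to N+1$ needs a separate remark. There $\delta(N)=\wrap(\theta(1)-\theta(N))$ by the cyclic convention, so the same computation yields $\wrap(\eta(N+1))=\theta(1)$, matching the interpretation in the statement. Alternatively, this case follows directly from the closing relation \eqref{eq:closing}: since $W(\theta)\in\mathbb Z$ by Remark~\ref{rem:wind}, we have $\eta(N+1)=\eta(1)+2\pi W(\theta)$, and periodicity of $\wrap$ together with \eqref{eq:lift-anchor} gives the claim.

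There is no real obstacle here. The only point requiring care is keeping track that all integer shifts stay integers, and that the cyclic index at $i=N$ is handled consistently with the edge orientation $(N,1)$.
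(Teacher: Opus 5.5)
Your proof is correct and follows essentially the same route as the paper: induction from the anchor condition, using $\delta(i)=\theta(i+1)-\theta(i)-2\pi k_i$ and the $2\pi$-periodicity of $\wrap$, with the case $i=N+1$ settled by the closing relation $\eta(N+1)=\eta(1)+2\pi W(\theta)$. Your explicit bookkeeping $\eta(i)=\theta(i)+2\pi\ell_i$, and your remark that the cyclic step $N\to N+1$ can also be folded into the induction, are minor refinements of the same argument.
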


\begin{proof}
We argue by induction on $i$. For $i=1$ the claim is exactly the anchor condition.
Assume now that for some $i\in\{1,\dots,N\}$ we have $
\wrap(\eta(i))=\theta(i)$.
By \eqref{eq:lift-inc}, we have:
\[
\wrap(\eta(i+1))
=
\wrap\!\big(\eta(i)+\delta(i)\big).
\]
Therefore, by the induction hypothesis:
\begin{equation}
\label{eq:proj1}
\wrap(\eta(i+1))
=
\wrap\!\big(\theta(i)+2\pi m_i+\delta(i)\big)
=
\wrap\!\big(\theta(i)+\delta(i)\big).
\end{equation}
By Remark~\ref{rem:wind},
there exists $\ell_i\in\mathbb Z$ such that 
$\delta(i)=\theta(i+1)-\theta(i)+2\pi \ell_i$,
so that
\[
\theta(i)+\delta(i)
=
\theta(i+1)+2\pi \ell_i.
\]
Hence, \eqref{eq:proj1} becomes:
\[
\wrap(\eta(i+1))
=
\wrap\!\big(\theta(i+1)+2\pi \ell_i\big)
=
\theta(i+1).
\]
This proves the induction step. Hence,
$\wrap(\eta(i))=\theta(i)$ for any $i\in\{i=1,\dots,N\}$.
Finally, for $i=N+1$ we use the closing relation \eqref{eq:closing} of Remark~\ref{rem:closing}, so that:
\[
\wrap(\eta(N+1))=\wrap(\eta(1))=\theta(1).
\]
This completes the proof.
\end{proof}
The previous definition specifies how a lifted trajectory should evolve
once the value of the updated edge is fixed. We now verify that this
construction is well posed. In particular, we show in Proposition~\ref{prop:lifted-process} that starting from an
initial lift there exists a unique lifted representation of the
trajectory that remains consistent with the circular dynamics. The next
proposition establishes the existence and uniqueness of the lifted
process. The following Lemma~\ref{lem:lift-midpoint} then records the key structural property of
this representation: in lifted coordinates the \texttt{ACCA} update
coincides exactly with the arithmetic midpoint operation on the updated
edge.
\begin{proposition}[Existence and uniqueness]
\label{prop:lifted-process}
Given an initial lift $\eta_0$, there exists a unique lifted
representation $(\eta_t)_{t\geq 0}$ associated with the trajectory
$(\theta_t)_{t\geq 0}$ on the event $\{t<\tau\}$.
\end{proposition}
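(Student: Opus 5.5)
We argue by induction on $t$, working on the event $\{t<\tau\}$ throughout. The aim is to show three things at each step. First, Definition~\ref{def:lifted-process} determines $\eta_{t+1}$ uniquely from $\eta_t$ and the selected edge $(k_t,k_t+1)$. Second, $\eta_{t+1}$ is a genuine lift of $\theta_{t+1}$ in the sense of Definition~\ref{def:lift}. Third, the value imposed at the updated edge is compatible with the increment relations \eqref{eq:lift-rec}. The base case is the given initial lift $\eta_0$. For $t<\tau$ no antipodal edge is present, so the update map $F_{e}$ is deterministic. Hence $\theta_{t+1}$ and the increments $\delta_{t+1}(i)$ are determined by $\theta_t$ and $k_t$.

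\emph{Uniqueness.} Suppose $\eta_{t+1}$ and $\tilde\eta_{t+1}$ both satisfy the construction. Both satisfy \eqref{eq:lift-rec} with the same increments $\delta_{t+1}(\cdot)$, so their difference is constant in $i\in\{1,\dots,N+1\}$. Both also equal $m_t$ at the coordinate $k_t$, so that constant is zero. The index $N+1$ needs a convention when $k_t=N$, i.e.\ for the closing edge. In that case the updated lifted pair is $(\eta_t(N),\eta_t(N+1))$, and we declare $\eta_{t+1}(N)=\eta_{t+1}(N+1)=m_t$. The argument is then the same.

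\emph{Existence and consistency.} We define $\eta_{t+1}(k_t):=m_t$ and propagate outward using \eqref{eq:lift-rec}. We must check two things. The first is that $\eta_{t+1}(k_t+1)=m_t$ as well, which by \eqref{eq:lift-rec} requires $\delta_{t+1}(k_t)=0$; this holds by Remark~\ref{rem:midpoint-correct} and the proof of Theorem~\ref{thm:W-conditional}. The second is that $\eta_{t+1}$ is a lift of $\theta_{t+1}$, in particular that the anchor condition holds.

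For the anchor, it is cleaner to show directly that $\wrap(\eta_{t+1}(i))=\theta_{t+1}(i)$ for all $i$. Note that $\eta_t(k+1)-\eta_t(k)=\delta_t(k)$, so $m_t=\eta_t(k)+\tfrac12\delta_t(k)$. By Lemma~\ref{lem:lift-projection} applied to $\eta_t$ and identity \eqref{eq:wrap1},
\[
\wrap(m_t)=\wrap\big(\theta_t(k)+\tfrac12\delta_t(k)\big)=\theta_{t+1}(k).
\]
Propagating with the increments $\delta_{t+1}$ and repeating the induction in the proof of Lemma~\ref{lem:lift-projection}, started from $k_t$ instead of $1$ (both directions work, since wrapped increments are congruent to true differences mod $2\pi$), gives $\wrap(\eta_{t+1}(i))=\theta_{t+1}(i)$ for all $i$. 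In particular the anchor \eqref{eq:lift-anchor} holds, and the closing relation follows from Remark~\ref{rem:closing}.

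We would also record that for $i\notin\{k_t,k_t+1\}$ one has $\eta_{t+1}(i)=\eta_t(i)$. This is because $\delta_{t+1}(k_t\pm1)=S_\pm^{(t)}(k_t)$ exactly: there is no wrapping correction since $t<\tau$, by Theorem~\ref{thm:W-conditional}. Summing the increments outward from $k_t$ then reproduces $\eta_t$. This also gives $W_{t+1}=W_t$, so $\eta_{t+1}(N+1)-\eta_{t+1}(1)=2\pi W_t$ is preserved.

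The main obstacle is bookkeeping at the closing edge and at index $N+1$. When $k_t=N$, or when $k_t=1$ and coordinate $N+1$ must move together with coordinate $1$, the $N+1$ lifted coordinates must remain a consistent lift. We would handle this by treating $\eta(N+1)$ as $\eta(1)+2\pi W$ and updating both simultaneously. The identity $W_{t+1}=W_t$ on $\{t<\tau\}$ guarantees that this is consistent.
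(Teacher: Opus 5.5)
Your proposal is correct and follows essentially the same route as the paper: set $\eta_{t+1}(k_t)=\eta_{t+1}(k_t+1)=m_t$, solve the increment relations (which is consistent because $\delta_{t+1}(k_t)=0$), check that $\wrap(m_t)=\theta_{t+1}(k_t)$ and propagate the projection property along the ring, and obtain uniqueness because the value at $k_t$ is fixed and the increments determine the remaining coordinates. Your extra claim that the non-updated coordinates are unchanged is not needed for the statement, and, as your last paragraph already notes, it fails for the ghost coordinate $N+1$ when $k_t=1$ and for coordinate $1$ when $k_t=N$.
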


\begin{proof}
We construct the sequence recursively in time.

At time $t=0$ the lift $\eta_0$ is given.
Assume that $\eta_t$ has already been constructed and is a lift of
$\theta_t$ in the sense of Definition~\ref{def:lift}.
Let $(k_t,k_t+1)$ be the edge updated at time $t$, where by convention
$k_t+1=N+1$ if $k_t=N$, and define
\[
m_t:=\frac{\eta_t(k_t)+\eta_t(k_t+1)}{2}.
\]
Set
\[
\eta_{t+1}(k_t)=\eta_{t+1}(k_t+1):=m_t .
\]
The remaining coordinates are uniquely determined by the increment
relations $
\eta_{t+1}(i+1)-\eta_{t+1}(i)=\delta_{t+1}(i)$,
for all $i\in\{1,\dots,N\}$,
which can be solved recursively starting from the known value
$\eta_{t+1}(k_t)$. This defines a unique vector $\eta_{t+1}\in\R^{N+1}$. By construction, these relations hold for all $i\neq k_t$, while
\[
\eta_{t+1}(k_t+1)-\eta_{t+1}(k_t)=0=\delta_{t+1}(k_t),
\]
because the midpoint update makes the two endpoints coincide.
Hence $
\eta_{t+1}(i+1)-\eta_{t+1}(i)=\delta_{t+1}(i)$, for all $i\in\{1,\dots,N\}$. It remains to verify the projection condition.
Since $\eta_t$ is a lift of $\theta_t$ by the recursion hypothesis, Lemma~\ref{lem:lift-projection}
gives
\[
\wrap(\eta_t(k_t))=\theta_t(k_t),
\qquad
\wrap(\eta_t(k_t+1))=\theta_t(k_t+1).
\]
Using $\eta_t(k_t+1)-\eta_t(k_t)=\delta_t(k_t)$ we obtain
\[
m_t=\eta_t(k_t)+\tfrac12\delta_t(k_t)
    =\eta_t(k_t+1)-\tfrac12\delta_t(k_t).
\]
Therefore
\[
\wrap(m_t)
=
\wrap\!\bigl(\theta_t(k_t)+\tfrac12\delta_t(k_t)\bigr)
=
\theta_{t+1}(k_t),
\]
and similarly
\[
\wrap(m_t)=\theta_{t+1}(k_t+1).
\]
Hence
\[
\wrap(\eta_{t+1}(k_t))=\theta_{t+1}(k_t),
\qquad
\wrap(\eta_{t+1}(k_t+1))=\theta_{t+1}(k_t+1).
\]
For any $i$ the increment relation implies
\[
\eta_{t+1}(i+1)=\eta_{t+1}(i)+\delta_{t+1}(i).
\]
Since
\[
\delta_{t+1}(i)=\wrap(\theta_{t+1}(i+1)-\theta_{t+1}(i)),
\]
by \eqref{eq:wrap1}, it follows that
\[
\wrap(\eta_{t+1}(i+1))=\theta_{t+1}(i+1)
\]
whenever $\wrap(\eta_{t+1}(i))=\theta_{t+1}(i)$.
Starting from the updated edge, this identity propagates along the
ring and yields
\[
\wrap(\eta_{t+1}(i))=\theta_{t+1}(i),
\qquad i=1,\dots,N+1,
\]
where $\theta_{t+1}(N+1)$ is interpreted as $\theta_{t+1}(1)$. Thus $\eta_{t+1}$ satisfies both the increment relations and the
projection condition of Definition~\ref{def:lift}, so it is a lift of
$\theta_{t+1}$. This proves existence.

Uniqueness follows because the value at the updated edge is fixed by
$m_t$, and the increment relations determine all remaining coordinates
uniquely.
\end{proof}
\begin{remark}[Interpretation of the closing edge in the lift]
\label{rem:closing-edge-lift}
The additional coordinate $\eta_t(N+1)$ represents the same physical
vertex as site $1$ after one complete traversal of the ring in the
covering space. By the closing relation of Definition~\ref{def:lift},
\[
\eta_t(N+1)=\eta_t(1)+2\pi W_t.
\]
Hence, the edge $(N,1)$ of the original ring is represented in the
lifted space by the segment joining $\eta_t(N)$ and $\eta_t(N+1)$.
Consequently, when the closing edge $(N,1)$ is updated by the \texttt{ACCA}
dynamics, the lifted midpoint is computed between the two lifted
neighbors $\eta_t(N)$ and $\eta_t(N+1)$. The update therefore takes
the form
\[
\eta_{t+1}(N)=\eta_{t+1}(N+1)
=\frac{\eta_t(N)+\eta_t(N+1)}{2}=\frac{\eta_t(N)+\eta_t(1)+2\pi W_t}{2}.
\]
After this midpoint assignment, the remaining coordinates are
reconstructed using the increment relations
\[
\eta_{t+1}(i+1)-\eta_{t+1}(i)=\delta_{t+1}(i).
\]
In particular, the coordinate $\eta_{t+1}(1)$ is determined
automatically by the reconstruction together with the anchor
condition $\wrap(\eta_{t+1}(1))=\theta_{t+1}(1)$.
\end{remark}
\begin{lemma}[Exact midpoint identity in the lift]
\label{lem:lift-midpoint}
Let $(\eta_t)_t$ be the lifted representation of $(\theta_t)_t$.
If the edge $(k_t,k_t+1)$ is updated at time $t$, then
\[
\eta_{t+1}(k_t)=\eta_{t+1}(k_t+1)
=
\frac{\eta_t(k_t)+\eta_t(k_t+1)}{2}.
\]
\end{lemma}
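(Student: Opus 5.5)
This identity is essentially built into Definition~\ref{def:lifted-process}, so the plan is to check that the construction there is consistent rather than to compute anything new. By Proposition~\ref{prop:lifted-process}, on $\{t<\tau\}$ the lifted representation $(\eta_t)_t$ exists, is unique, and $\eta_{t+1}$ is a lift of $\theta_{t+1}$. The one point to verify is that the coordinates $\eta_{t+1}(k_t)$ and $\eta_{t+1}(k_t+1)$ are really fixed at $m_t$. They must not be overwritten when the remaining coordinates are reconstructed from the increment relations \eqref{eq:lift-rec}.

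\textbf{Step 1: the two assignments are compatible.} We must show the assignments agree with the increment relation on the updated edge. Setting both endpoints equal to $m_t$ requires
\[
\eta_{t+1}(k_t+1)-\eta_{t+1}(k_t)=\delta_{t+1}(k_t).
\]
The proof of Theorem~\ref{thm:W-conditional} gives $\delta_{t+1}(k_t)=0$, since the wrapped endpoints coincide by Remark~\ref{rem:midpoint-correct}. Hence both assignments are compatible with \eqref{eq:lift-rec}.

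\textbf{Step 2: propagation never revisits the updated edge.} Starting from $\eta_{t+1}(k_t)=m_t$, the recursion $\eta_{t+1}(i+1)=\eta_{t+1}(i)+\delta_{t+1}(i)$ is run forward and backward along the index set $\{1,\dots,N+1\}$. The only relation involving both $k_t$ and $k_t+1$ is the one just checked. So the value at $k_t+1$ is again $m_t$, and the remaining coordinates are determined without conflict. This is exactly the uniqueness statement of Proposition~\ref{prop:lifted-process}.

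\textbf{Step 3: rewrite $m_t$ as the arithmetic midpoint.} By definition $m_t=\tfrac12(\eta_t(k_t)+\eta_t(k_t+1))$, which gives the claimed formula. I will also note the equivalent forms
\[
m_t=\eta_t(k_t)+\tfrac12\delta_t(k_t)=\eta_t(k_t+1)-\tfrac12\delta_t(k_t),
\]
which follow from $\eta_t(k_t+1)-\eta_t(k_t)=\delta_t(k_t)$. Wrapping these forms reproduces the update rule \eqref{eq:update2}, so the lifted midpoint projects to the circular midpoint.

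\textbf{Main obstacle: the closing edge $k_t=N$.} Here $k_t+1$ must be read as the lifted index $N+1$, not the cyclic index $1$, as explained in Remark~\ref{rem:closing-edge-lift}. The risk is that the anchor or the closing relation $\eta_{t+1}(N+1)=\eta_{t+1}(1)+2\pi W_{t+1}$ conflicts with the assignment $\eta_{t+1}(N)=\eta_{t+1}(N+1)=m_t$.

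\begin{itemize}
\item \emph{No conflict with the closing relation.} Run the recursion downward from $N$ to $1$. This gives $\eta_{t+1}(1)=m_t-\sum_{i<N}\delta_{t+1}(i)$. Since $\delta_{t+1}(N)=0$, the closing relation then holds automatically, with $W_{t+1}=W_t$ because $t<\tau$ (Theorem~\ref{thm:W-conditional}).
\item \emph{No conflict with the anchor.} The anchor condition $\wrap(\eta_{t+1}(1))=\theta_{t+1}(1)$ holds by the projection argument already carried out in Proposition~\ref{prop:lifted-process}.
\end{itemize}

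So no coordinate is overwritten in this case either, and the identity holds for every updated edge.
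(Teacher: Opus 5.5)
Your proposal is correct and takes the same route as the paper, which simply notes that the identity is immediate from Definition~\ref{def:lifted-process}, where $\eta_{t+1}(k_t)=\eta_{t+1}(k_t+1):=m_t$ holds by construction. Your additional checks re-verify the well-posedness already established in Proposition~\ref{prop:lifted-process}: compatibility via $\delta_{t+1}(k_t)=0$, no overwriting during reconstruction, and the closing edge read with lifted index $N+1$.
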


\begin{proof}
The identity follows directly from the definition of the lifted
trajectory.
\end{proof}
%
% SUBSECTION: trapping in winding sectors: a heuristic picture
%
\subsection{Trapping in winding sectors: a heuristic picture}
\label{sec:euri}
Lemma~\ref{lem:first-crossing-probability} shows that branch-crossing
events occur with strictly positive probability when the configuration
is initially disordered. At early times the wrapped increments
$\delta_t(i)$ are typically large, and midpoint updates may therefore
push neighboring differences outside the principal interval
$(-\pi,\pi)$, producing changes in the winding number.
At the same time, however, the \texttt{ACCA} dynamics is locally contractive:
repeated midpoint averaging progressively smooths phase differences
and suppresses large gradients. As the configuration flattens,
branch--crossings become increasingly unlikely. Once all local
increments remain inside $(-\pi,\pi)$, the winding number can no
longer change and the dynamics becomes confined to a fixed winding
sector.

This transition from an active crossing phase to a frozen winding
sector is visible in the numerical experiments shown in
Figures~\ref{fig:sim5_per}--\ref{fig:W_per}. These simulations illustrate the winding-sector trapping mechanism that will be rigorously analyzed in the following subsections.

Figure~\ref{fig:sim5_per} shows the evolution of a system with
$N=1000$ under periodic boundary conditions. Initially the wrapped
differences $\delta_0(i)$ are widely spread and branch--crossings
occur frequently, causing repeated fluctuations of the winding number.
As the dynamics proceeds the profile becomes progressively smoother,
and extended monotone segments appear. Once all increments remain
inside $(-\pi,\pi)$ the winding number stabilizes. The configuration
then approaches a twisted profile with approximately constant slope
$2\pi W/N$, as visible in panel~(d).

The geometric meaning of this state becomes clearer in cylindrical
coordinates (Figure~\ref{fig:sim5_per_cyl}). In this representation the
configuration wraps around the cylinder exactly $W$ times, forming a
helical structure. Local midpoint updates continue to smooth the
configuration, but the global twist persists because the winding
number is fixed.

The evolution of the winding number itself is shown in
Figure~\ref{fig:W_per}. During the initial transient phase $W_t$
changes several times, reflecting the positive crossing probability
computed in Lemma~\ref{lem:first-crossing-probability}. Once the
configuration enters a \emph{no--branch--crossing corridor}, $W_t$
becomes constant and remains so for very long time intervals.

These simulations reveal a clear separation of time scales. Local
averaging rapidly smooths the configuration, while the winding number
changes only during rare crossing events. Once crossings cease, the
system evolves for long times near a twisted configuration determined
by the frozen winding number. Escape from this winding sector
requires a new branch--crossing event, after which the same mechanism
repeats until the winding number eventually reaches zero and global
consensus is achieved. The next subsections formalize this picture.

% N=1000 periodic
\begin{figure}[H]
    \centering
    \subfigure[]{\includegraphics[width=0.44\textwidth]{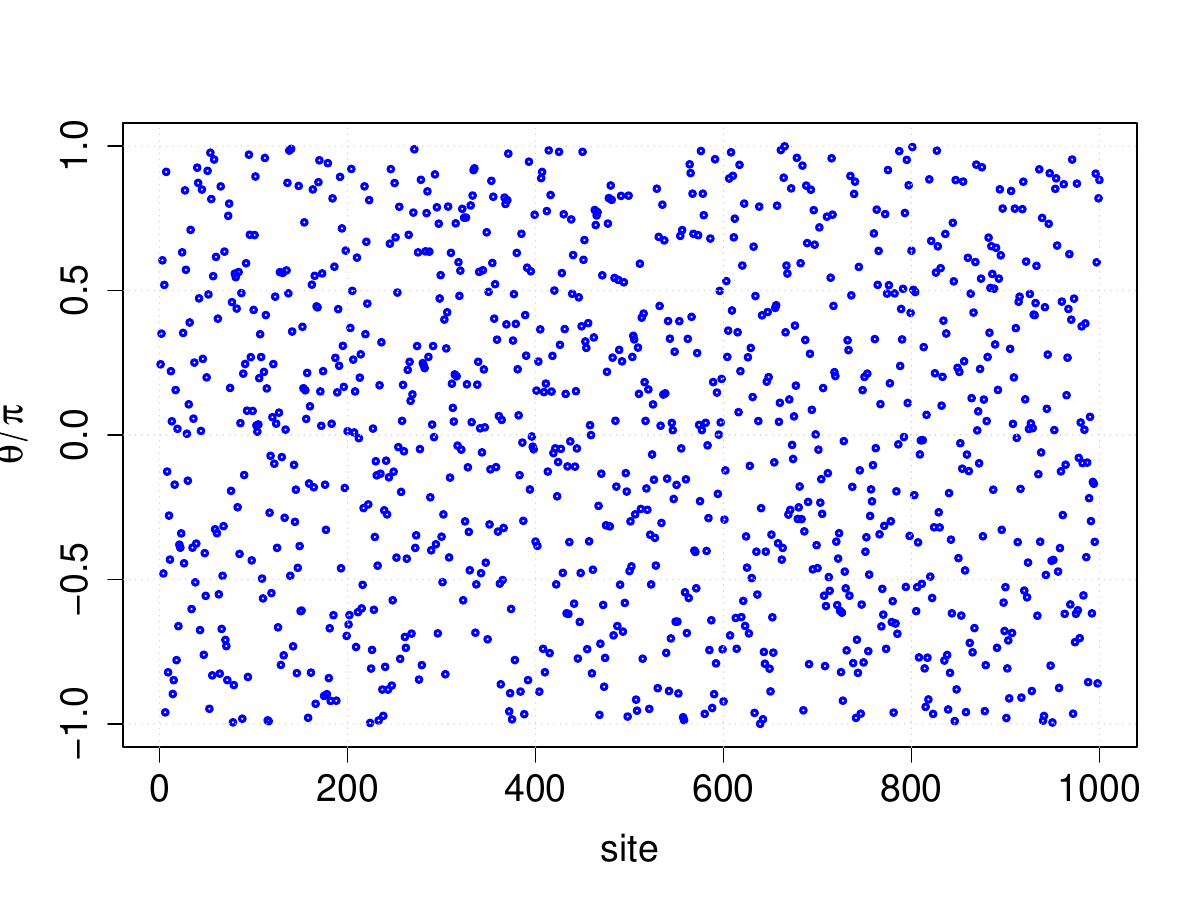}} 
    \subfigure[]{\includegraphics[width=0.44\textwidth]{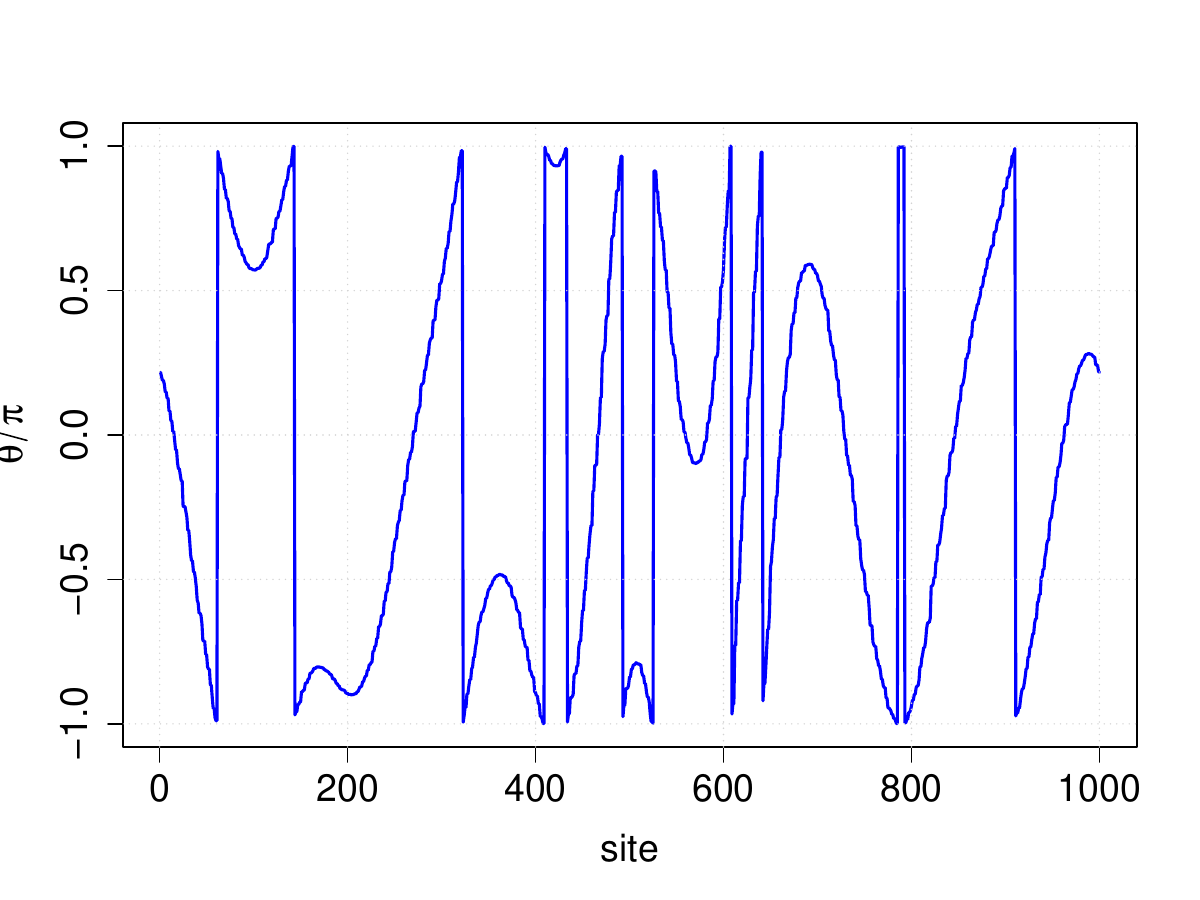}} 
    \subfigure[]{\includegraphics[width=0.44\textwidth]{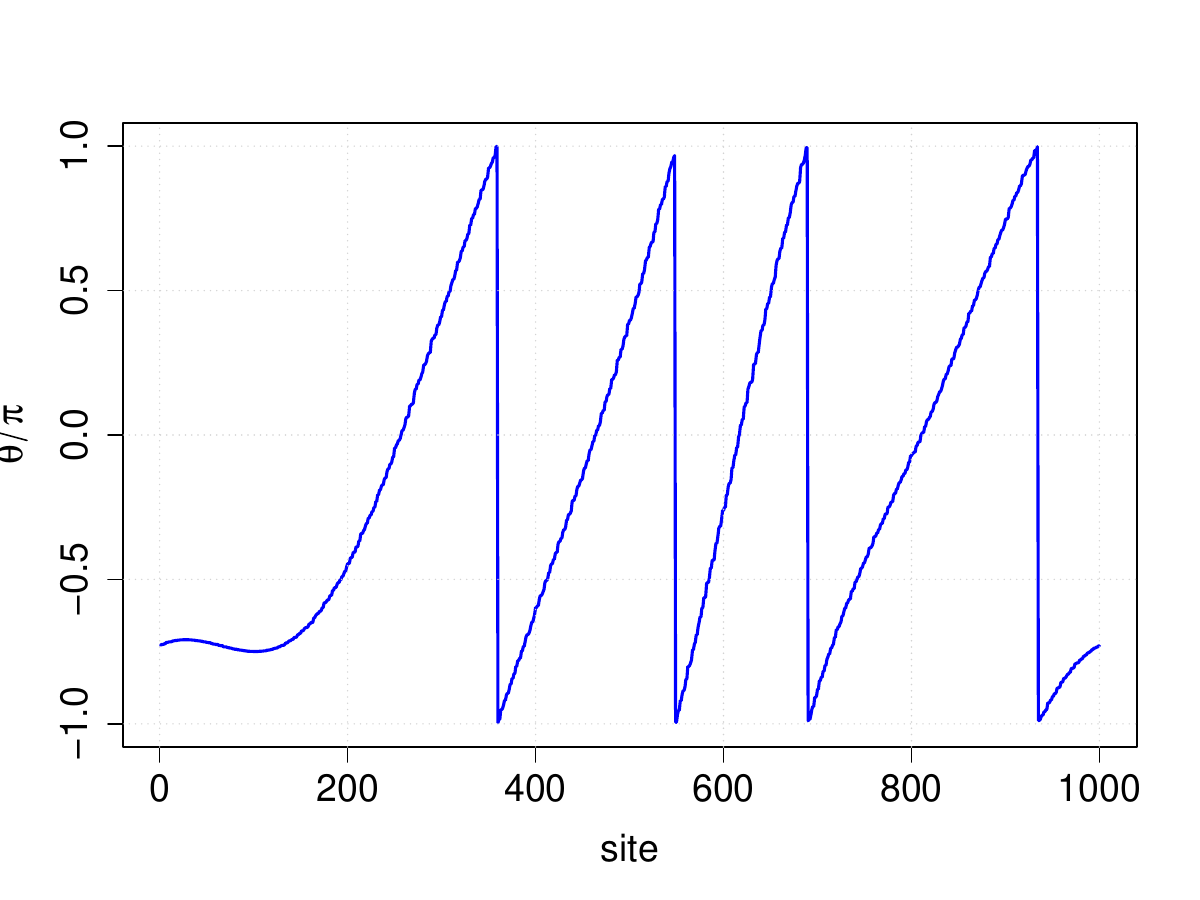}}
    \subfigure[]{\includegraphics[width=0.44\textwidth]{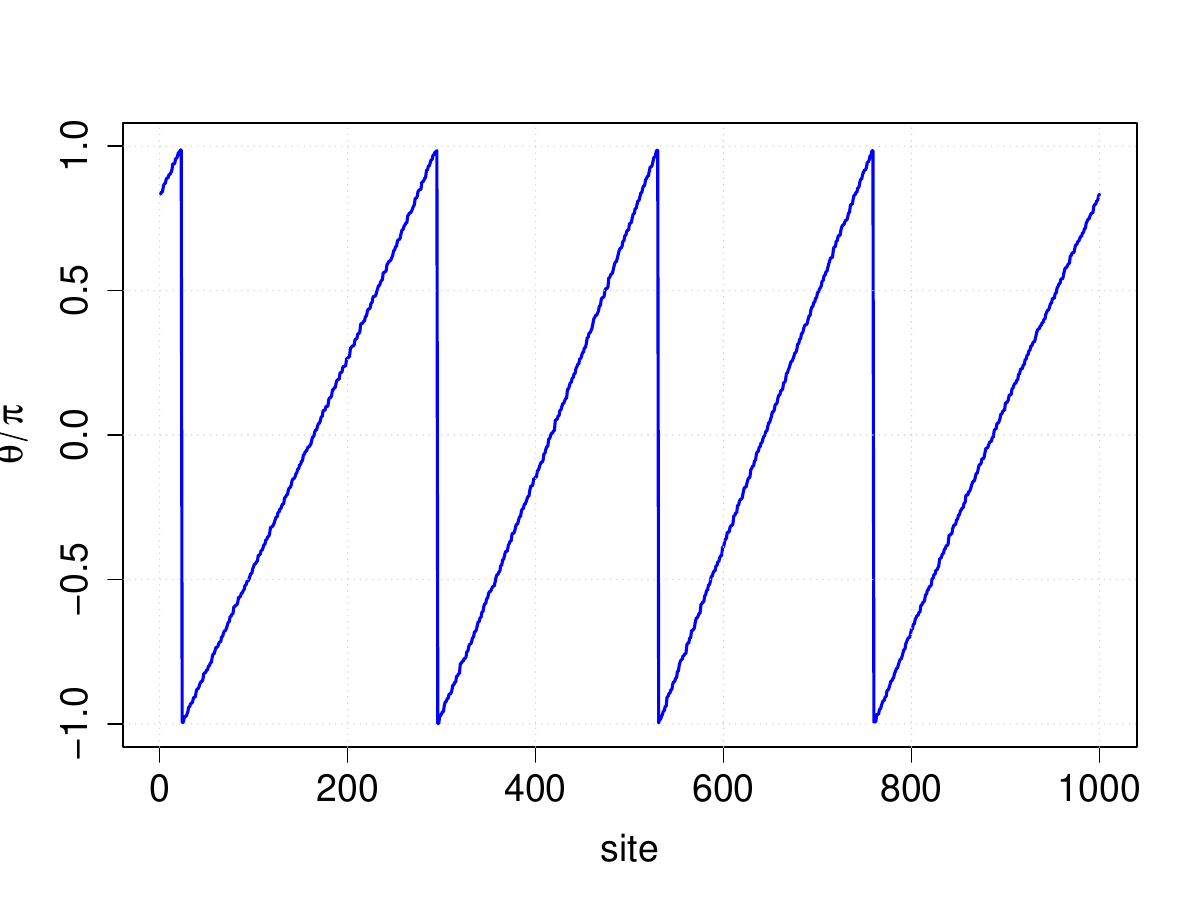}}
    \caption{Simulation for a system with $N=1000$ with periodic boundary conditions (a) is the starting opinion distribution  (b) configuration at $t=10^7$ with temporary winding configurations  (c) at $t= 10^8$  (d) at $t=10^9$ the configuration has $W=4$.}
    \label{fig:sim5_per}
\end{figure}

\begin{figure}[H]
    \centering
    \subfigure[]{\includegraphics[width=0.49\textwidth]{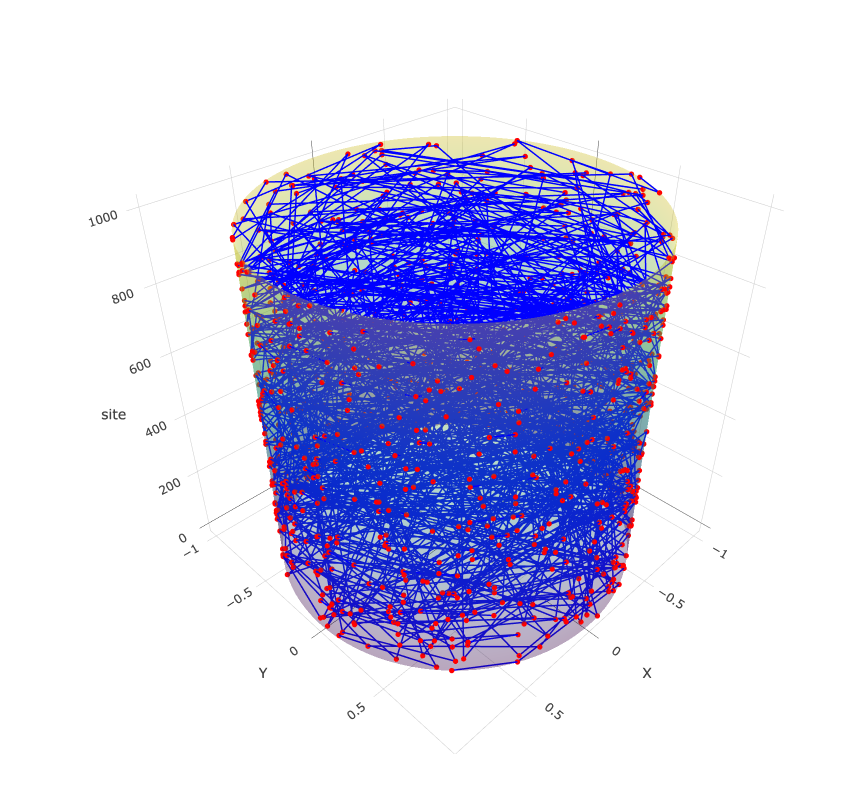}} 
    \subfigure[]{\includegraphics[width=0.49\textwidth]{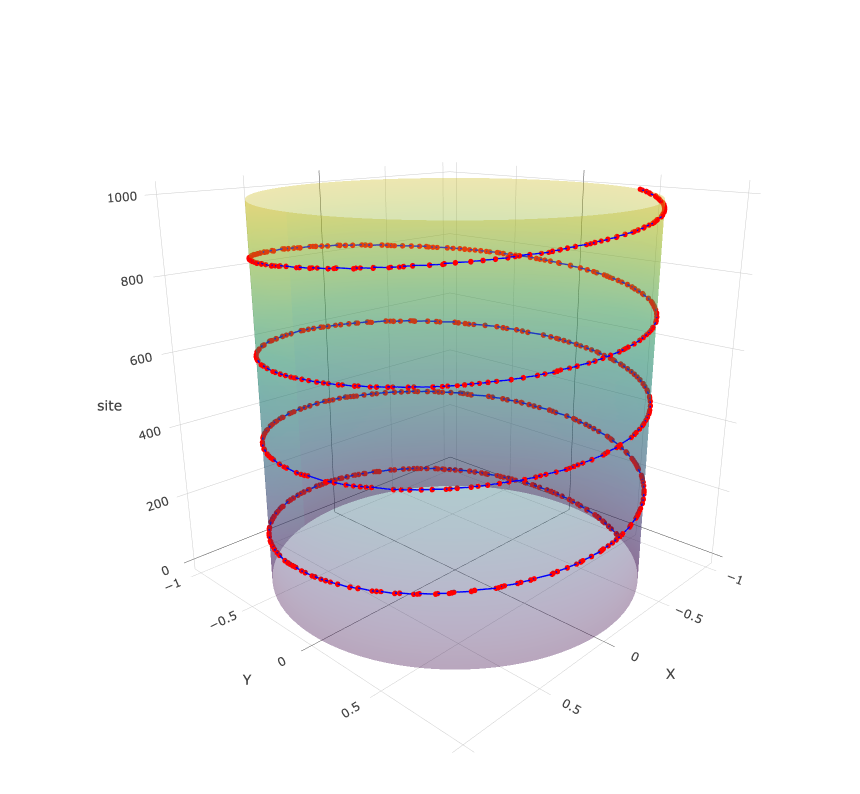}} 
    
    \caption{Simulation for a system with $N=1000$  as in Figure~\ref{fig:sim5_per} cylindrical coordinates  (a) is the starting opinion distribution  (b) configuration at   at $t=10^9$.}
    \label{fig:sim5_per_cyl}
\end{figure}
\begin{figure}[H]
    \centering
\includegraphics[height=8cm]{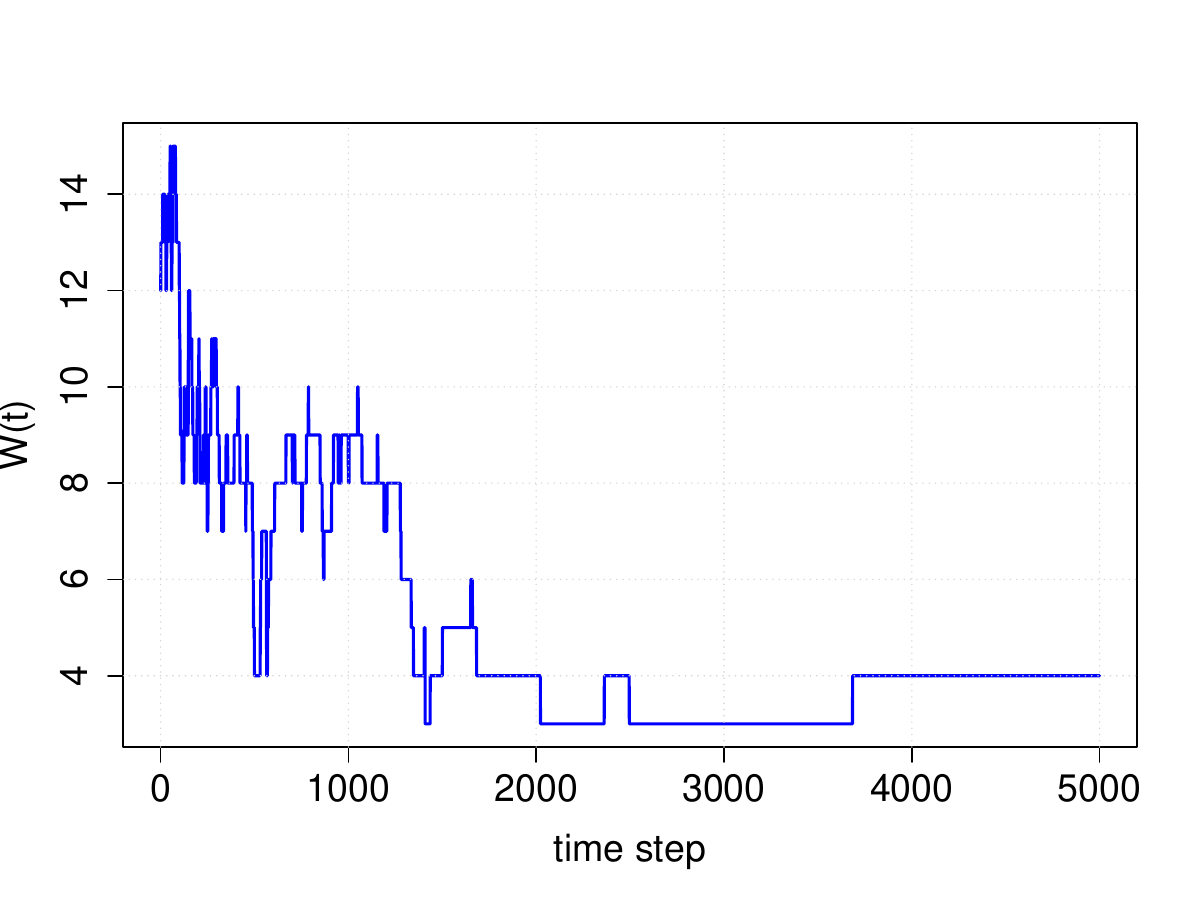}
    \caption{Evolution in time of the winding number of the simulation of  Figure~\ref{fig:sim5_per}. Notice that from $t=3700$ $W_t$ remains constant at value $4.$}
    \label{fig:W_per}
\end{figure}

%
% SUBSECTION: rigorous results on the relaxation around a winding configuration
%
\subsection{Rigorous results on the relaxation around a winding configuration}
\label{sec:meta0}

We now turn this heuristic picture of winding-sector trapping into a rigorous statement.
Throughout this subsection we work under periodic boundary conditions
and assume Assumption~\ref{asmp:tau-infty}, so that no branch--crossing
or antipodal edge ever occurs.
By Theorem~\ref{thm:W-conditional} this implies that the winding number is frozen:
\[
W_t=W_0\qquad\text{for all }t\ge 0.
\]
In particular, the (deterministic) slope
\begin{equation}\label{eq:beta}
\beta := \frac{2\pi W_0}{N}
\end{equation}
is constant. A configuration with winding $W_0$ is therefore naturally
decomposed into a linear profile of slope $\beta$ plus fluctuations.

The analysis is carried out in the lifted variables introduced in
Section~\ref{sec:lift}. In particular, by Lemma~\ref{lem:lift-midpoint} the midpoint update of the
\texttt{ACCA} dynamics corresponds exactly to an arithmetic midpoint update
in the lifted process $\eta_t$. The key mechanism is that local midpoint averaging suppresses fluctuations
but cannot remove the linear mode imposed by the winding number. As a
result, the dynamics contracts only after subtracting a deterministic
drift generated by the winding sector.
However, a perfectly linear function $\eta(i)=\alpha+\beta i$ is not invariant
under \texttt{ACCA} dynamics:
when an edge $(k,k+1)$ is selected, the two sites are replaced by their
arithmetic mean. On a linear profile this midpoint equals
$\alpha+\beta k+\tfrac{\beta}{2}$, which differs from the values
$\alpha+\beta k$ and $\alpha+\beta(k+1)$ by deterministic shifts
$\pm\beta/2$.
Thus each update introduces a local drift relative to the linear trend,
and these shifts accumulate along the update history.

The main goal is to isolate a \emph{co--moving frame} in which this
deterministic drift is removed and the remaining evolution becomes an
exact Euclidean midpoint averaging process.
In that frame we establish strict contraction and almost sure convergence
to consensus.
To achieve this, we introduce a \emph{compensator} process $s_t$ that records the accumulated
drift and allows us to subtract it exactly from the lifted dynamics (Theorem~\ref{thm:adaptive-detrended}). The compensator process therefore tracks the deformation of a linear
profile with slope $\beta$ under the same sequence of midpoint updates.

\begin{definition}[Compensator process $(s_t)_t$]
\label{def:comp}
We define recursively the \emph{compensator} $s_t\in\R^N$.
Set $s_0\equiv 0$.
When the edge $(k,k+1)$ is updated with $1\le k\le N-1$, define
\begin{eqnarray}
s_{t+1}(k)&=&\frac{s_t(k)+s_t(k+1)}{2}+\frac{\beta}{2}, \nonumber \\
s_{t+1}(k+1)&=&\frac{s_t(k)+s_t(k+1)}{2}-\frac{\beta}{2}, \label{eq:s-update1}\\
s_{t+1}(i)&=&s_t(i),\qquad i\notin \{k,k+1\}. \nonumber
\end{eqnarray}
If instead the closing edge $\{N,1\}$ is updated, we define
\begin{eqnarray}
s_{t+1}(N)&=&\frac{s_t(N)+s_t(1)}{2}+\frac{\beta}{2}, \nonumber \\
s_{t+1}(1)&=&\frac{s_t(N)+s_t(1)}{2}-\frac{\beta}{2}, \label{eq:s-update2}\\
s_{t+1}(i)&=&s_t(i),\qquad i\notin\{N,1\}.\nonumber
\end{eqnarray}
\end{definition}
Since the lifted configuration is indexed by $\{1,\dots,N+1\}$,
we extend the compensator to this index set by defining
\[
s_t(N+1):=s_t(1).
\]
Thus, $s_t$ is periodic on the lifted path, consistently with the
identification of vertices $1$ and $N+1$ in the covering space.
We give a basic uniform-in-time second-moment bound for the compensator. %Although the convergence of the detrended dynamics will follow from the exact
%midpoint structure of $\zeta_t$, 
Proposition~\ref{prop:s-uniform-bound-per-site}
shows indeed that the compensator remains uniformly controlled per site and therefore
does not generate unbounded fluctuations in the reconstructed profile. The proof is given in Appendix~\ref{sec:appB}.
\begin{proposition}[Uniform-in-time per-site $L^2$ bound for the compensator]
\label{prop:s-uniform-bound-per-site}
Let $(s_t)_{t\ge0}$ be defined by \eqref{eq:s-update1}--\eqref{eq:s-update2}
with $s_0\equiv 0$, and assume that at each step the updated edge is chosen uniformly
among the $N$ edges of the ring. Then $\sum_{i=1}^N s_t(i)=0$ for all $t$, and
\begin{equation}\label{prop:s-uniform-bound-elementary}
\sup_{t\ge0}\,\E\Big[\frac1N\|s_t\|_2^2\Big] \;\le\; C\,(2\pi W_0)^2,
\end{equation}
for some constant $C>0$ independent of $N$.
Equivalently,
\begin{equation}\label{eq:s-uniform-L2}
\sup_{t\ge0}\,\E\|s_t\|_2^2 \;\le\; C\,N.
\end{equation}
\end{proposition}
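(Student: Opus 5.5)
The identity $\sum_i s_t(i)=0$ follows by induction on $t$. It holds at $t=0$, and each update \eqref{eq:s-update1}--\eqref{eq:s-update2} replaces the pair $(a,b)=(s_t(k),s_t(k+1))$ (respectively $(s_t(N),s_t(1))$) by $(\bar m+\beta/2,\ \bar m-\beta/2)$ with $\bar m=(a+b)/2$. The pair sum $a+b$ is therefore preserved, and no other coordinate moves.

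For the second moment I will compute the one-step expected change of $\|s_t\|_2^2$ and close it with a discrete Poincaré inequality on the ring. Throughout, $\beta$ is treated as a fixed constant; this is legitimate since $W_t=W_0$ is frozen in the regime of this subsection. The $s$-recursion is driven only by the i.i.d.\ uniform edge sequence.
\begin{itemize}
\item \emph{Single update.} Using $a^2+b^2=2\bar m^2+\tfrac12(a-b)^2$, one update changes $\|s\|_2^2$ by exactly $\tfrac12\beta^2-\tfrac12(a-b)^2$.
\item \emph{Closing edge.} The same formula holds for the closing edge, where the relevant difference is $s(1)-s(N)=s(N+1)-s(N)$. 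This is consistent with the periodic extension $s_t(N+1)=s_t(1)$.
\end{itemize}
Averaging over the uniformly chosen edge gives
\[
\E\big[\|s_{t+1}\|_2^2\,\big|\,s_t\big]
=\|s_t\|_2^2+\frac{\beta^2}{2}-\frac{1}{2N}\sum_{i=1}^N\big(s_t(i+1)-s_t(i)\big)^2 .
\]

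The key step is to bound the Dirichlet form from below on mean-zero vectors. The cycle Laplacian has eigenvalues $4\sin^2(\pi j/N)$, $j=0,\dots,N-1$. The constant eigenvector is excluded by $\sum_i s_t(i)=0$, so
\[
\sum_{i=1}^N\big(s(i+1)-s(i)\big)^2\ge \lambda_1\|s\|_2^2,
\qquad
\lambda_1=4\sin^2(\pi/N)\ge 16/N^2,
\]
where the last bound uses $\sin x\ge 2x/\pi$ on $[0,\pi/2]$. Taking expectations, $x_t:=\E\|s_t\|_2^2$ satisfies
\[
x_{t+1}\le\Big(1-\frac{\lambda_1}{2N}\Big)x_t+\frac{\beta^2}{2},
\qquad 0<\frac{\lambda_1}{2N}<1 .
\]
Since $x_0=0$, induction gives
\[
x_t\le \frac{N\beta^2}{\lambda_1}\le \frac{N^3\beta^2}{16}
\]
for all $t$. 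Substituting $\beta=2\pi W_0/N$ yields
\[
\sup_t \E\Big[\tfrac1N\|s_t\|_2^2\Big]\le \tfrac1{16}(2\pi W_0)^2,
\]
so the claimed bound holds with $C=1/16$. For fixed $W_0$ this is the statement $\sup_t\E\|s_t\|_2^2\le CN$, with the constant absorbing $(2\pi W_0)^2$.

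The only delicate point is the spectral-gap step: the Poincaré constant must scale exactly like $N^{-2}$. It is this scaling that cancels the $N^{2}$ in $\beta^{-2}=N^2/(2\pi W_0)^2$ and makes $C$ independent of $N$, so it has to be checked carefully. The remaining bookkeeping for the closing edge is routine, since it enters through the periodic extension.
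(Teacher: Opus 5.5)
Your proof is correct and follows the paper's argument step for step: conservation of $\sum_i s_t(i)$, the exact one-step identity $\|s_{t+1}\|_2^2-\|s_t\|_2^2=\tfrac12\beta^2-\tfrac12\big(s_t(k)-s_t(k+1)\big)^2$, averaging over the uniformly chosen edge, a Poincaré bound for the mean-zero Dirichlet form, and iteration of the resulting linear recursion from $s_0\equiv 0$. The only difference is in the Poincaré step: you use the cycle-Laplacian spectral gap $4\sin^2(\pi/N)\ge 16/N^2$ and get $C=1/16$, whereas the paper uses the elementary chain $\|s\|_2^2\le N\operatorname{diam}(s)^2\le N^2\sum_i(s(i)-s(i+1))^2$ and gets $C=1$; both give the needed $N^{-2}$ scaling.
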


\begin{definition}[Detrended process]
\label{def:detrended}
We introduce the detrended process $(\zeta_t(i))_i$:
\begin{equation}
\label{eq:detrended}
\zeta_t(i):=\eta_t(i)-\beta(i-1)-s_t(i),
\qquad i=1,\dots,N+1,
\end{equation}
and
\[
\bar{\zeta}_t:=\frac1N\sum_{i=1}^N \zeta_t(i).
\]
\end{definition}
\begin{remark}
\label{rem:detrended}
Since
\[
\eta_t(N+1)=\eta_t(1)+2\pi W_t,
\qquad
2\pi W_t=\beta N,
\]
and $s_t(N+1)=s_t(1)$, it follows that
\[
\zeta_t(N+1)=\zeta_t(1).
\]
\end{remark}

A perfectly twisted profile of winding $W_0$ has the deterministic slope $\beta$,
i.e.\ it is of the form $\eta(i)=\alpha+\beta i$.
However, such a profile is \emph{not} invariant under asynchronous midpoint updates:
if an interior edge $(k,k+1)$ is updated, then the arithmetic midpoint equals
\[
\frac{\beta k+\beta(k+1)}{2}=\beta k+\frac{\beta}{2},
\]
so, relative to the linear trend $\beta i$, each update introduces a deterministic
local shift of size $\pm\beta/2$ at the updated sites.
Over time these shifts accumulate in a spatially nonuniform way, depending on the random
update history. The compensator $s_t$ is designed to reproduce exactly this accumulated bias.
Subtracting $s_t$ cancels the history-dependent drift and reveals a purely contractive
midpoint dynamics in the variables $\zeta_t$.

\medskip

By using the lifted midpoint identities of Lemma~\ref{lem:lift-midpoint},
we now show that after subtracting the deterministic trend $\beta i$ and the
compensator $s_t$, the dynamics becomes an exact Euclidean midpoint update
in the detrended variables $\zeta_t$.
\begin{lemma}[Exact midpoint structure in detrended variables]
\label{lem:compensator-consistency}
Assume Assumption~\ref{asmp:no-antipodes} so that $W_t=W_0$ for all $t$,
and let $\beta=\frac{2\pi W_0}{N}$.
Define the compensator $(s_t)_t$ process by \eqref{eq:s-update1}--\eqref{eq:s-update2}.
If the edge $(k_t,k_t+1)$ is updated at time $t$, then
\[
\zeta_{t+1}(k_t)=\zeta_{t+1}(k_t+1)
=
\frac{\zeta_t(k_t)+\zeta_t(k_t+1)}{2},
\]
while all other coordinates remain unchanged.
In particular, the $\zeta$-dynamics is an exact Euclidean midpoint update
on the selected edge.
\end{lemma}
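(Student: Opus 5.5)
The plan is to verify the identity by direct substitution, using the definition of $\zeta_t$ in \eqref{eq:detrended}, the lifted midpoint identity of Lemma~\ref{lem:lift-midpoint}, and the compensator update \eqref{eq:s-update1}--\eqref{eq:s-update2}. There are two separate things to check: the updated pair, which is pure algebra, and the claim that all other coordinates are unchanged. The second requires a statement about the lift $\eta_t$ away from the updated edge, and this is where the no--branch--crossing hypothesis enters.

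\emph{Step 1: untouched coordinates.} Fix $t<\tau$ and let the updated edge be $(k,k+1)$. By the proof of Theorem~\ref{thm:W-conditional}, the new increments are
\[
\delta_{t+1}(k)=0,\qquad \delta_{t+1}(k\pm1)=\wrap(\spm(k)),\qquad \delta_{t+1}(i)=\delta_t(i)\ \text{otherwise}.
\]
Under Assumption~\ref{asmp:no-antipodes} the wrap in the middle expression is the identity, so
\[
\delta_{t+1}(k-1)=\delta_t(k-1)+\tfrac12\delta_t(k),\qquad \delta_{t+1}(k+1)=\delta_t(k+1)+\tfrac12\delta_t(k).
\]
Reconstructing $\eta_{t+1}$ from $m_t=\eta_t(k)+\tfrac12\delta_t(k)$ as in Proposition~\ref{prop:lifted-process} then gives
\[
\eta_{t+1}(k-1)=m_t-\delta_{t+1}(k-1)=\eta_t(k-1),
\]
and symmetrically $\eta_{t+1}(k+2)=\eta_t(k+2)$. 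Since all further increments are unchanged, $\eta_{t+1}(i)=\eta_t(i)$ for every $i\notin\{k,k+1\}$ in the lifted index set. Because $s_{t+1}(i)=s_t(i)$ off the edge and the trend $\beta(i-1)$ is time-independent, it follows that $\zeta_{t+1}(i)=\zeta_t(i)$ for those $i$.

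\emph{Step 2: interior edge, $1\le k\le N-1$.} Write $\bar s=\tfrac12(s_t(k)+s_t(k+1))$. Then
\[
\zeta_{t+1}(k)=m_t-\beta(k-1)-\bar s-\tfrac{\beta}{2},\qquad \zeta_{t+1}(k+1)=m_t-\beta k-\bar s+\tfrac{\beta}{2}.
\]
These two expressions coincide. Averaging $\zeta_t(k)$ and $\zeta_t(k+1)$ gives $m_t-\beta(k-1)-\tfrac{\beta}{2}-\bar s$, which is the same value. So the $\pm\beta/2$ terms in the compensator exactly cancel the half-step of the linear trend.

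\emph{Step 3: closing edge, the main bookkeeping obstacle.} Here the lift updates the coordinates $N$ and $N+1$ (Remark~\ref{rem:closing-edge-lift}), while the compensator updates the sites $N$ and $1$, with $s(N+1)=s(1)$. The same computation applies with $k=N$: the trend at $N+1$ is $\beta N$, and $s_{t+1}(N+1)=s_{t+1}(1)=\bar s-\beta/2$. This yields
\[
\zeta_{t+1}(N)=\zeta_{t+1}(N+1)=\tfrac12\bigl(\zeta_t(N)+\zeta_t(N+1)\bigr)=\tfrac12\bigl(\zeta_t(N)+\zeta_t(1)\bigr),
\]
where the last equality uses Remark~\ref{rem:detrended}.

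It remains to check that the physical site $1$, namely $\zeta_{t+1}(1)$, carries the same value. Since $W_{t+1}=W_t=W_0$, the closing relation \eqref{eq:closing} gives $\eta_{t+1}(1)=\eta_{t+1}(N+1)-\beta N$. Remark~\ref{rem:detrended} at time $t+1$ then gives $\zeta_{t+1}(1)=\zeta_{t+1}(N+1)$. Step 1, applied to $i\in\{2,\dots,N-1\}$, shows that the remaining coordinates are unchanged. This completes the argument.

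I expect the only delicate point to be the consistency between the lifted index $N+1$ and the physical site $1$ in Step 3. It is handled by the frozen winding number combined with $s(N+1)=s(1)$.
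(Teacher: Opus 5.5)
Your proposal is correct and follows the paper's route: both arguments rest on the same direct substitution of Lemma~\ref{lem:lift-midpoint} and the $\pm\beta/2$ compensator update into \eqref{eq:detrended}, with the closing edge handled via $s_t(N+1)=s_t(1)$. Your Steps 1 and 3 make explicit what the paper's proof leaves implicit: that the reconstructed lift is unchanged off the updated edge (which is where the no--branch--crossing condition enters), and that physical site $1$ agrees with lifted site $N+1$ after a closing-edge update. The only slip is that for $k=1$ the lifted coordinate $N+1$ does move, $\eta_{t+1}(N+1)=\eta_t(N+1)+\tfrac12\delta_t(1)$. So your ``unchanged'' claim should be restricted to the physical sites $3,\dots,N$, and $\zeta_{t+1}(N+1)=\zeta_{t+1}(1)$ then follows from the closing relation exactly as in your Step 3.
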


\begin{proof}
By Lemma~\ref{lem:lift-midpoint}, the lifted variables satisfy
\[
\eta_{t+1}(k_t)=\eta_{t+1}(k_t+1)
=
\frac{\eta_t(k_t)+\eta_t(k_t+1)}{2}.
\]
Moreover, by construction of the compensator, one has
\[
s_{t+1}(k_t)=\frac{s_t(k_t)+s_t(k_t+1)}{2}+\frac{\beta}{2},
\qquad
s_{t+1}(k_t+1)=\frac{s_t(k_t)+s_t(k_t+1)}{2}-\frac{\beta}{2},
\]
where in the case $k_t=N$ we use the convention $s_t(N+1)=s_t(1)$.
We now compute
\begin{align*}
\zeta_{t+1}(k_t)
&=\eta_{t+1}(k_t)-\beta(k_t-1)-s_{t+1}(k_t)\\
&=\frac{\eta_t(k_t)+\eta_t(k_t+1)}{2}
   -\beta(k_t-1)
   -\frac{s_t(k_t)+s_t(k_t+1)}{2}
   -\frac{\beta}{2}\\
&=\frac{\eta_t(k_t)-\beta(k_t-1)-s_t(k_t)}{2}
  +\frac{\eta_t(k_t+1)-\beta k_t-s_t(k_t+1)}{2}\\
&=\frac{\zeta_t(k_t)+\zeta_t(k_t+1)}{2}.
\end{align*}
Similarly,
\begin{align*}
\zeta_{t+1}(k_t+1)
&=\eta_{t+1}(k_t+1)-\beta k_t-s_{t+1}(k_t+1)\\
&=\frac{\eta_t(k_t)+\eta_t(k_t+1)}{2}
   -\beta k_t
   -\frac{s_t(k_t)+s_t(k_t+1)}{2}
   +\frac{\beta}{2}\\
&=\frac{\eta_t(k_t)-\beta(k_t-1)-s_t(k_t)}{2}
  +\frac{\eta_t(k_t+1)-\beta k_t-s_t(k_t+1)}{2}\\
&=\frac{\zeta_t(k_t)+\zeta_t(k_t+1)}{2}.
\end{align*}
For every $i\notin\{k_t,k_t+1\}$, neither $\eta_t(i)$ nor $s_t(i)$ is modified by
the update, hence
\[
\zeta_{t+1}(i)=\eta_{t+1}(i)-\beta(i-1)-s_{t+1}(i)
=\eta_t(i)-\beta(i-1)-s_t(i)
=\zeta_t(i).
\]
Therefore the detrended variables evolve exactly by Euclidean midpoint
averaging on the selected edge.
\end{proof}

The previous lemma shows that, after subtracting the deterministic trend $\beta i$ and
the compensator $s_t(i)$, the dynamics reduces to Euclidean midpoint averaging.
Equivalently, the fluctuation field $\zeta_t$ evolves \emph{autonomously}:
although $s_t$ is defined from the same update history, all terms involving $\beta$ and $s_t$
cancel exactly in the update of $\zeta_t$, leaving the pure midpoint rule.
This cancellation is the key to the contraction statement below.

\begin{theorem}[Convergence of the configuration toward a detrended limit]
\label{thm:adaptive-detrended}
Under the \texttt{ACCA} dynamics and under Assumption~\ref{asmp:tau-infty},
there exists a (random) constant $\alpha^\star\in\R$ such that
\begin{equation}\label{eq:config-to-detrended-limit}
\lim_{t\to\infty}\max_{1\le i\le N}\big|\zeta_t(i)-\alpha^\star\big|
=0 \qquad \mathrm{a.s.}
\end{equation}
\end{theorem}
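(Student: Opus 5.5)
The plan is to reduce the statement to a consensus result for pure Euclidean midpoint averaging on the ring $\{1,\dots,N\}$ with periodic identification. By Lemma~\ref{lem:compensator-consistency}, under Assumption~\ref{asmp:tau-infty} the vector $(\zeta_t(1),\dots,\zeta_t(N))$ evolves autonomously. When edge $(k,k+1)$ is selected, both coordinates are replaced by their arithmetic mean. For $k=N$ we use $\zeta_t(N+1)=\zeta_t(1)$ (Remark~\ref{rem:detrended}), so the closing edge averages $\zeta_t(N)$ and $\zeta_t(1)$. Thus $\zeta_{t+1}=P_{e_t}\zeta_t$, where $P_e$ is the doubly stochastic symmetric averaging matrix of edge $e$, and the edges $e_t$ are i.i.d.\ uniform on $\mathcal E_p$.

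\textbf{Step 1 (conserved mean and Lyapunov function).} Each $P_e$ preserves $\sum_i\zeta(i)$, so $\bar\zeta_t=\bar\zeta_0$ for all $t$. Set $\alpha^\star:=\bar\zeta_0$, which is random through $\theta_0$ and the choice of initial lift. Define $V_t:=\sum_{i=1}^N(\zeta_t(i)-\alpha^\star)^2$. A direct computation gives $V_{t+1}=V_t-\tfrac12(\zeta_t(k)-\zeta_t(k+1))^2$ when $(k,k+1)$ is updated. Hence $V_t$ is pathwise nonincreasing and converges almost surely.

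\textbf{Step 2 (strict contraction in expectation).} Taking conditional expectation over the uniform edge gives
\[
\E[V_{t+1}\mid\mathcal F_t]=V_t-\frac{1}{2N}\sum_{e=(k,k+1)\in\mathcal E_p}(\zeta_t(k)-\zeta_t(k+1))^2 .
\]
The sum on the right is the Dirichlet form of the cycle graph applied to $\zeta_t-\alpha^\star\mathbf 1$, which is orthogonal to constants. By the spectral gap of the cycle Laplacian, $\lambda_2=2(1-\cos(2\pi/N))=4\sin^2(\pi/N)>0$, we obtain $\E[V_{t+1}\mid\mathcal F_t]\le(1-\gamma_N)V_t$ with $\gamma_N=\frac{2}{N}\sin^2(\pi/N)\in(0,1)$. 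Iterating yields $\E V_t\le(1-\gamma_N)^t\E V_0$.

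\textbf{Step 3 (almost sure convergence).} This step needs care, because $V_0$ need not be integrable in general: it depends on the lift and on $s_0=0$. I would avoid integrability issues by conditioning on $\mathcal F_0$. Given $\theta_0$ and $\eta_0$, $V_0$ is a finite constant, and the bound $\E[V_t\mid\mathcal F_0]\le(1-\gamma_N)^tV_0$ is summable in $t$. Markov's inequality and Borel--Cantelli then give $V_t\to0$ almost surely. Alternatively, monotonicity of $V_t$ together with $\E[V_t\mid\mathcal F_0]\to0$ already forces $\lim V_t=0$ almost surely. Finally, $\max_i|\zeta_t(i)-\alpha^\star|\le V_t^{1/2}$ gives \eqref{eq:config-to-detrended-limit}, and the case $i=N+1$ follows from periodicity.

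\textbf{Main obstacle.} The only delicate point is that the reduction in Lemma~\ref{lem:compensator-consistency} must hold at every time. This requires the event $\{\tau=\infty\}$ of Assumption~\ref{asmp:tau-infty}, and restricting to that event might seem to bias the edge sequence. However, the $\zeta$-recursion and the Lyapunov decrease in Step 1 are deterministic statements along each path. So I would phrase the argument pathwise: on $\{\tau=\infty\}$, $V_t$ is nonincreasing. A pure path argument also works, avoiding any conditioning on $\{\tau=\infty\}$. Almost surely, every finite word of edges occurs infinitely often, and a fixed word (for instance sweeping $(1,2),(2,3),\dots$ repeatedly) reduces $V$ by a fixed factor $c_N<1$. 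Combined with monotonicity, this gives $V_t\to0$ almost surely on $\{\tau=\infty\}$.
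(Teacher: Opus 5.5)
Your argument is correct, and its main line differs from the paper's. Both proofs start from the conserved mean and the pathwise decrease of the quadratic functional; your $V_t$ is the paper's $\widetilde\Psi_t$. From there the paper ignores the size of the decrement and works with the diameter $R_t$. At the start of each block of $N-1$ steps it picks a path from an argmin to an argmax, and then applies a conditional Borel--Cantelli argument with success probability at least $N^{-(N-1)}$. You instead turn the decrement into an expected contraction through the spectral gap $4\sin^2(\pi/N)$ of the cycle Laplacian. This gives an explicit rate, $\mathbb E[V_t\mid\mathcal F_0]\le(1-\tfrac2N\sin^2\tfrac{\pi}{N})^t V_0$, so relaxation takes order $N^3$ updates (the diffusive scale). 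The paper's block argument only gives a time scale super-exponential in $N$.

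Your fixed-word variant is closer in spirit to the paper, but cleaner in two ways. With a fixed word the blocks are genuinely independent, so ordinary Borel--Cantelli suffices. And contracting $V$ rather than $R_t$ avoids a defect in the paper's inequality \eqref{eq:diam-contract}, which fails when the maximum is attained, or nearly attained, at several sites. For example, with $N=4$ and $\zeta=(0,1,0,1)$, updating $\{1,2\}$ gives $(\tfrac12,\tfrac12,0,1)$: $R$ stays $1$ instead of dropping to $\tfrac12$. The paper's route really needs a word that visits every vertex, such as the Hamiltonian path started at the argmin.

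Three points deserve a line each in a write-up.

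\begin{itemize}
\item The uniform factor $c_N<1$ holds because each edge map is an orthogonal projection. A vector in $\mathbf 1^\perp$ whose norm is preserved by the word must be fixed by every edge of a spanning path, hence is zero. Compactness of the unit sphere then gives a uniform constant.
\item For the spectral-gap route, run the Euclidean recursion from $\zeta_0$ for all $t$ with the same i.i.d.\ edges, and prove convergence for this auxiliary process on the whole probability space. It coincides with $\zeta_t$ on $\{\tau=\infty\}$, which removes the conditioning issue you flag. The paper's use of $\mathbb P(A_\ell\mid\mathcal F_{t_\ell})$ tacitly needs the same device.
\item The integrability concern is moot: $V_0$ depends only on the increments $\delta_0(i)-\beta$, so $V_0\le 4\pi^2N^3$ deterministically.
\end{itemize}
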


\begin{proof}
By Definition~\ref{def:detrended} and Lemma~\ref{lem:compensator-consistency}, each update replaces
$\zeta_t(k)$ and $\zeta_t(k+1)$ by their midpoint and leaves all other coordinates unchanged.
In particular, the mean $\bar\zeta_t$ is preserved. Define the variance functional
\begin{equation}
\label{eq:variance}
\widetilde\Psi_t:=\sum_{i=1}^N\big(\zeta_t(i)-\bar\zeta_t\big)^2.
\end{equation}
If edge $(k,k+1)$ is updated, only the two terms at $k$ and $k+1$ change.
Writing $a:=\zeta_t(k)-\bar\zeta_t$ and $b:=\zeta_t(k+1)-\bar\zeta_t$,
the midpoint update sends both to $(a+b)/2$, hence
\[
\widetilde\Psi_{t+1}-\widetilde\Psi_t
=
2\Big(\frac{a+b}{2}\Big)^2-(a^2+b^2)
=
-\frac12(a-b)^2
=
-\frac12\big(\zeta_t(k+1)-\zeta_t(k)\big)^2\le 0.
\]
Thus $(\widetilde\Psi_t)_t$ is nonincreasing and convergent, and along the realized updates
\begin{equation}\label{eq:sq-sum-on-updates}
\sum_{t=0}^\infty \big(\zeta_t(k_t+1)-\zeta_t(k_t)\big)^2
\le 2\,\widetilde\Psi_0<\infty,
\end{equation}
where $k_t$ denotes the updated edge at time $t$.

We now prove almost-sure consensus for $\zeta_t$.
Let $R_t:=\max_i\zeta_t(i)-\min_i\zeta_t(i)$ denote the diameter.
A midpoint update is a convexification step, hence $R_{t+1}\le R_t$.

Fix $t$ and let $u_-$ and $u_+$ be indices attaining the minimum and maximum at time $t$.
Choose a simple path along the ring from $u_-$ to $u_+$ with vertices
$u_0=u_-,u_1,\dots,u_{m}=u_+$, where $m\le N-1$, and consider the associated edge word
\[
\mathbf{w}_t=(\{u_0,u_1\},\{u_1,u_2\},\dots,\{u_{m-1},u_m\}).
\]
If, starting from time $t$, the sequence of updated edges satisfies
\[
k_t=\{u_0,u_1\},\;
k_{t+1}=\{u_1,u_2\},\;
\dots,\;
k_{t+m-1}=\{u_{m-1},u_m\},
\]
then 
\begin{equation}\label{eq:diam-contract}
R_{t+m}\le \Big(1-2^{-m}\Big)\,R_t \;\le\; \Big(1-2^{-(N-1)}\Big)\,R_t.
\end{equation}
Indeed, along such a sequence the value at $u_m$ is replaced by the average of the two
endpoint values with weights at least $2^{-m}$ and $1-2^{-m}$, hence the maximum is pulled
toward the minimum by at least a fraction $2^{-m}$, while the minimum cannot decrease
under midpoint averaging. This yields \eqref{eq:diam-contract}.

Now partition time into disjoint blocks of length $N-1$ and define $A_\ell$ to be the event
that the $\ell$-th block realizes the specific word $\mathbf{w}_{t_\ell}$ chosen at the
beginning of that block, where $t_\ell=\ell(N-1)$.
Conditionally on $\mathcal F_{t_\ell}$, the probability of $A_\ell$ is at least
\[
\mathbb P(A_\ell\mid \mathcal F_{t_\ell}) \;\ge\; N^{-(N-1)}=:p>0,
\]
since each of the next $N-1$ edge choices is uniform over $N$ edges.
Moreover, the edge choices in disjoint blocks are independent, so the events $(A_\ell)$
satisfy a conditional Borel--Cantelli argument and occur infinitely often almost surely.
Whenever $A_\ell$ occurs, \eqref{eq:diam-contract} implies that $R_t$ is multiplied by a
factor at most $q:=1-2^{-(N-1)}\in(0,1)$.
Since $(R_t)_t$ is nonincreasing, it follows that $R_t\to 0$ almost surely.

Hence $\zeta_t$ converges almost surely to a constant $\alpha^\star$ (necessarily
$\alpha^\star=\lim_{t\to\infty}\bar\zeta_t=\bar\zeta_0$), and therefore
$\max_i|\zeta_t(i)-\alpha^\star|\to 0$ almost surely.
\end{proof}

\begin{remark}\label{rem:beta0}
If $W_0=0$ then $\beta=0$ and the compensator $s_t$ is identically zero.
In that case $\zeta_t=\eta_t$ and Theorem~\ref{thm:adaptive-detrended}
reduces to almost sure consensus for the lifted midpoint dynamics.
\end{remark}
Theorem~\ref{thm:adaptive-detrended} implies that $\zeta_t$ becomes spatially flat, as one can see in the simulation in Figure~\ref{fig:zeta}, which visualizes the structural decomposition underlying
Theorem~\ref{thm:adaptive-detrended}.
In the adaptive variables $\zeta_t$ the midpoint dynamics is strictly contractive and converges to consensus,
whereas in the original angular variables $\theta_t$ the preserved topological twist prevents flattening.
\begin{figure}[H]
\centering
\begin{minipage}{0.49\textwidth}
    \centering
    \includegraphics[width=\linewidth]{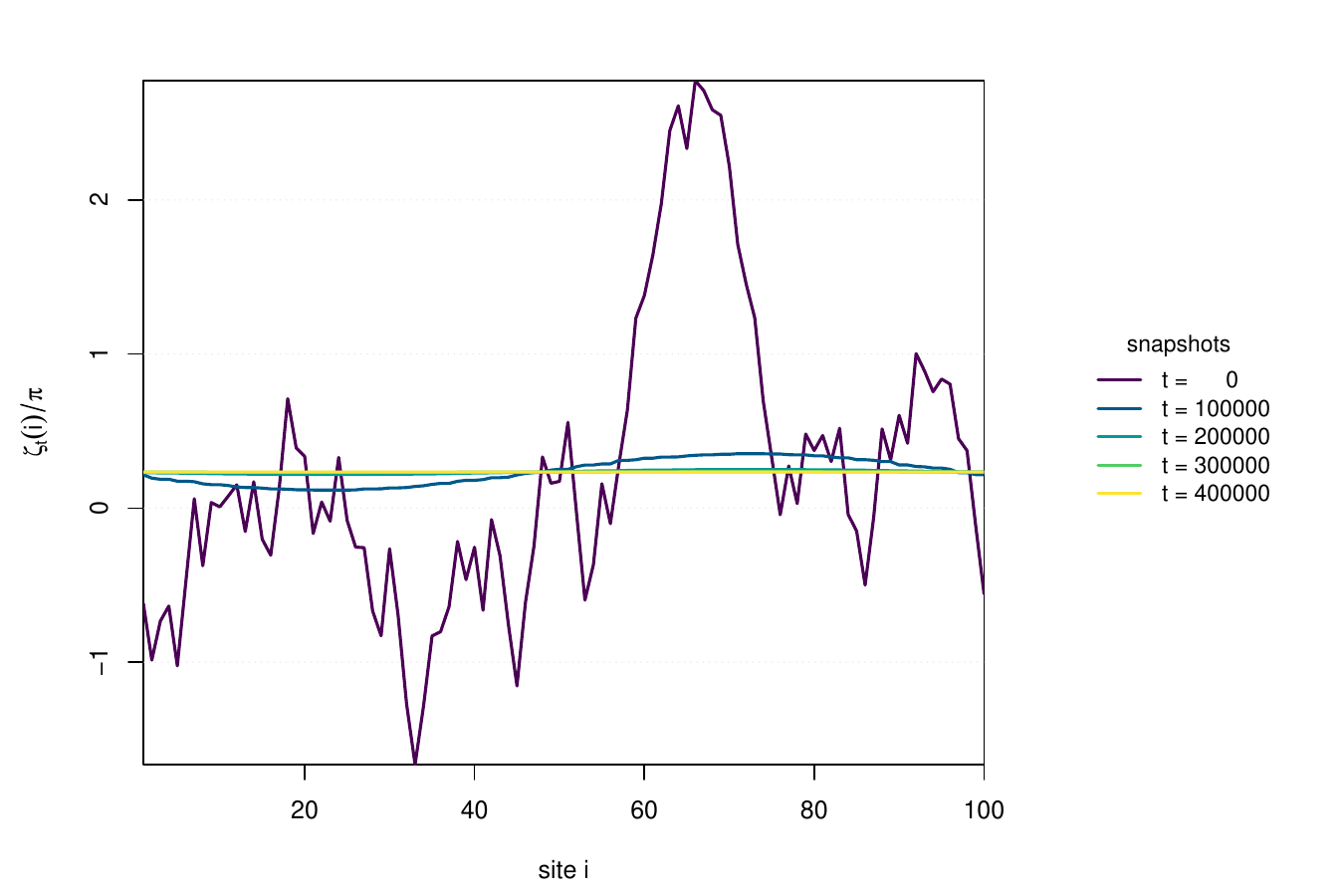}
    \vspace{-2mm}
    
    \small (a)
\end{minipage}\hfill
\begin{minipage}{0.49\textwidth}
    \centering
    \includegraphics[width=\linewidth]{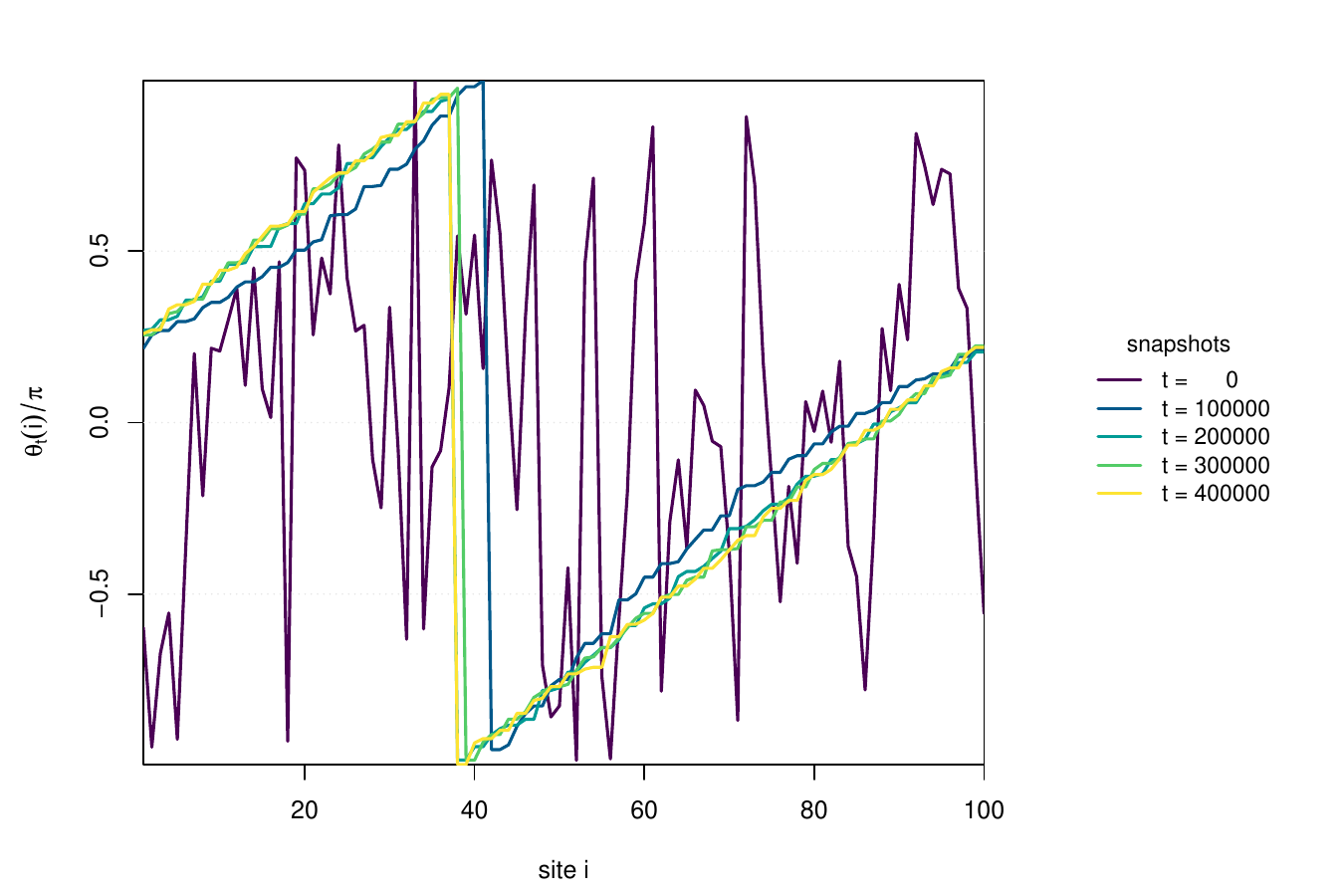}
    \vspace{-2mm}
    
    \small (b)
\end{minipage}

\caption{
Comparison between the detrended dynamics and the original angular dynamics under periodic boundary conditions with conserved winding number $W_0=1$ for a simulation with $N=100$. We started from a random configuration satisfying Assumption~\ref{asmp:no-antipodes}.
(a) Snapshots of the detrended variable $\zeta_t(i)/\pi$ at increasing times,
 illustrating the  contraction of the  midpoint dynamics in the co--moving frame (Theorem~\ref{thm:adaptive-detrended}), leading to almost sure consensus in $\zeta$.
(b) Corresponding snapshots of the angular configuration $\theta_t(i)/\pi$.
 The evolution remains organized around a twisted ramp structure.
}
\label{fig:zeta}
\end{figure}
Translating back to edge increments yields an exact decomposition, for every $i=1,\dots,N$,
\begin{equation}\label{eq:delta-decomp}
\delta_t(i)
=
\beta
+
\bigl(s_t(i+1)-s_t(i)\bigr)
+
\bigl(\zeta_t(i+1)-\zeta_t(i)\bigr).
\end{equation}
The decomposition \eqref{eq:delta-decomp} separates the dynamics
into three distinct components. The term $\beta=2\pi W_0/N$ represents the
deterministic slope fixed by the winding sector, the difference
$s_t(i+1)-s_t(i)$ describes the drift generated by the history of midpoint
updates, and the fluctuation term $\zeta_t(i+1)-\zeta_t(i)$ captures the
remaining deviations from the co–moving profile. 

By Theorem~\ref{thm:adaptive-detrended}, the fluctuation field $\zeta_t$
converges almost surely to a constant configuration. Consequently the last
term in \eqref{eq:delta-decomp} vanishes asymptotically and the
configuration approaches a twisted linear profile of slope $\beta$ that is
transported by the compensator process $s_t$.

Motivated by this decomposition, the natural notion of distance from the
twisted co–moving profile is the following:

\begin{equation}
\label{eq:dtilde}
\widetilde{\mathcal D}_t
:=
\frac{1}{N}\sum_{i=1}^N
\Big(\delta_t(i)-\beta-\big(s_t(i{+}1)-s_t(i)\big)\Big)^2.
\end{equation}
The next theorem shows that this distance vanishes almost surely (hence also in
time average, in expectation, and in probability).

\begin{theorem}[Closeness to the co--moving increment profile]
\label{thm:closeness-winding}
Assume  Assumption~\ref{asmp:tau-infty}.
Let $\eta_t\in\R^{N+1}$ be the lifted configuration, let $s_t$ be the compensator defined in
\eqref{eq:s-update1}--\eqref{eq:s-update2}, and
$\zeta_t(i)$ defined as in Definition~\ref{def:detrended}. Then
\[
\lim_{t\to\infty}
\widetilde{\mathcal D}_t=0, \qquad \mathrm{a.s.},
\]
where $\widetilde{\mathcal D}_t$ is defined in \eqref{eq:dtilde}.

\end{theorem}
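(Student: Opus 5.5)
The plan is to reduce $\widetilde{\mathcal D}_t$ to a functional of the detrended field $\zeta_t$ alone, and then apply Theorem~\ref{thm:adaptive-detrended}.

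\textbf{Step 1 (identify $\widetilde{\mathcal D}_t$ with a $\zeta$-quantity).} Under Assumption~\ref{asmp:tau-infty} the lifted process $\eta_t$ is well defined for all $t$ by Proposition~\ref{prop:lifted-process}. By \eqref{eq:lift-inc}, $\delta_t(i)=\eta_t(i+1)-\eta_t(i)$ for $i=1,\dots,N$. Substituting $\eta_t(i)=\zeta_t(i)+\beta(i-1)+s_t(i)$ from Definition~\ref{def:detrended} gives exactly the decomposition \eqref{eq:delta-decomp}. This holds also for $i=N$, because $s_t(N+1)=s_t(1)$ and, by Remark~\ref{rem:detrended}, $\zeta_t(N+1)=\zeta_t(1)$. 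Hence the summand in \eqref{eq:dtilde} equals $(\zeta_t(i+1)-\zeta_t(i))^2$, and
\[
\widetilde{\mathcal D}_t=\frac1N\sum_{i=1}^N\big(\zeta_t(i+1)-\zeta_t(i)\big)^2 .
\]
The only care needed here is the closing edge $i=N$. There the index $N+1$ has to be read in the lifted sense for $\eta$, and as the periodic extension for $s$ and $\zeta$. Remark~\ref{rem:closing-edge-lift} and Remark~\ref{rem:detrended} make these two readings consistent.

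\textbf{Step 2 (bound by the diameter).} By the triangle inequality, for every $i$,
\[
|\zeta_t(i+1)-\zeta_t(i)|\le 2\max_{1\le j\le N}|\zeta_t(j)-\alpha^\star| .
\]
This uses again $\zeta_t(N+1)=\zeta_t(1)$, so only the indices $1,\dots,N$ appear. Therefore
\[
\widetilde{\mathcal D}_t\le 4\max_j|\zeta_t(j)-\alpha^\star|^2 .
\]

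\textbf{Step 3 (conclude).} Theorem~\ref{thm:adaptive-detrended} gives $\max_j|\zeta_t(j)-\alpha^\star|\to0$ almost surely under Assumption~\ref{asmp:tau-infty}, so $\widetilde{\mathcal D}_t\to0$ almost surely. As a consistency check, $\widetilde{\mathcal D}_t\le R_t^2$, where $R_t$ is the nonincreasing diameter from that proof. This bound is deterministic and uniform, so dominated convergence also yields convergence in expectation and in time average, matching the remark preceding the theorem.

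The main obstacle is not analytic but bookkeeping. One must verify that \eqref{eq:delta-decomp} holds exactly, with no $2\pi$ ambiguity, on every edge including the closing one. This relies on Assumption~\ref{asmp:tau-infty} freezing $W_t=W_0$, so that $\eta_t(N+1)-\eta_t(1)=2\pi W_0=\beta N$ for all $t$. I would check this first and then run Steps 2–3.
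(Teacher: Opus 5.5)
Your proposal is correct and follows the paper's proof essentially verbatim. Like the paper, you rewrite $\widetilde{\mathcal D}_t$ as $\frac1N\sum_{i=1}^N(\zeta_t(i+1)-\zeta_t(i))^2$ via $\eta_t(i)=\zeta_t(i)+\beta(i-1)+s_t(i)$, bound each increment by $2\max_j|\zeta_t(j)-\alpha^\star|$, and invoke Theorem~\ref{thm:adaptive-detrended}. Your explicit use of $\zeta_t(N+1)=\zeta_t(1)$ (Remark~\ref{rem:detrended}) on the closing edge, and the bound $\widetilde{\mathcal D}_t\le R_t^2\le R_0^2$ for the in-expectation claim, are steps the paper leaves implicit.
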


\begin{proof}
By definition,
\[
\delta_t(i)=\eta_t(i+1)-\eta_t(i).
\]
Using $\eta_t(i)=\zeta_t(i)+\beta (i-1)+s_t(i)$, we obtain
\[
\delta_t(i)
=
\zeta_t(i+1)-\zeta_t(i)+\beta+\big(s_t(i+1)-s_t(i)\big),
\]
hence
\[
\delta_t(i)-\beta-\big(s_t(i+1)-s_t(i)\big)=\zeta_t(i+1)-\zeta_t(i).
\]
Therefore
\[
\widetilde{\mathcal D}_t=
\frac{1}{N}\sum_{i=1}^N\big(\zeta_t(i+1)-\zeta_t(i)\big)^2.
\]
By Theorem~\ref{thm:adaptive-detrended}, $\zeta_t$ converges almost surely to a constant:
there exists $\alpha^\star$ such that $\max_i|\zeta_t(i)-\alpha^\star|\to 0$ a.s.
Consequently,
\[
\max_i|\zeta_t(i+1)-\zeta_t(i)|
\le 2\max_j|\zeta_t(j)-\alpha^\star|
\longrightarrow 0
\qquad\text{a.s.}
\]
It follows that $\widetilde{\mathcal D}_t\to 0$ almost surely.
\end{proof}

\begin{remark}\label{rem:meaning-metastable}
Theorem~\ref{thm:adaptive-detrended} states that, in the co--moving frame defined by
subtracting $\beta i+s_t(i)$, the fluctuations $\zeta_t$ reach consensus almost surely.
Equivalently, the lifted configuration $\eta_t$ becomes asymptotically close (in sup norm)
to some affine twisted profile of the form $\alpha+\beta i+s_t(i)$, where the shift
$\alpha$ is constant and the time dependence enters only through the compensator.
Theorem~\ref{thm:closeness-winding} is the edge-increment counterpart:
it shows that, in mean square, the increment field $\delta_t(i)$ stays close to the
co--moving increment profile $\beta+(s_t(i+1)-s_t(i))$.
\end{remark}

%
% SUBSECTION: a constructive escape mechanism
%

\subsection{A constructive escape mechanism}
\label{sec:escape-heur}

The previous subsections describe the dynamics inside a fixed winding
sector. Once the system enters the no--branch--crossing corridor
(Assumption~\ref{asmp:no-antipodes}), the winding number remains constant
and the midpoint updates rapidly smooth the configuration toward a
twisted profile compatible with that winding number.
Nevertheless, for every fixed $N$ the stochastic dynamics converges almost
surely to consensus. Consequently, the process cannot remain indefinitely
in a nonzero winding sector: with probability one a branch--crossing must
eventually occur, allowing the winding number to change.

The purpose of this subsection is to exhibit an explicit mechanism that
produces such an escape. Rather than attempting a full probabilistic
analysis of the exit time, we construct a deterministic update pattern
that unwinds a twisted configuration. Since any finite update sequence has
positive probability of occurring in the stochastic dynamics, the
existence of such a sequence guarantees that escape from a nonzero winding
sector is always possible.

\medskip

After the initial transient phase—during which branch crossings may occur
frequently (Lemma~\ref{lem:first-crossing-probability}), the dynamics
typically enter a regime where the winding number remains frozen for very
long times. In this regime the configuration approaches a smooth twisted
profile of the form
\[
\theta(i)\approx \theta(1)+\frac{2\pi W_\infty}{N}(i-1),
\]
and fluctuates around it until a rare excursion produces a branch crossing and allows the winding to change.

Figures~\ref{fig:destroy}--\ref{fig:destroy2} illustrate one explicit
escape route for $N=300$, starting from a smooth winding configuration
with $W=8$. In this simulation we impose a deterministic cyclic schedule
that repeatedly updates the interior edges
\[
(1,2),(2,3),\dots,(N-1,N),
\]
while the closing edge $(N,1)$ is never selected (see Definition~\ref{def:cyclic-sweep-delta}).
During such a time window the ring is effectively treated as an open
chain (i.e. with open boundary conditions), and the \texttt{ACCA} dynamics transports the increment mass toward the
un-updated boundary.

\begin{figure}[H]
    \centering
    \subfigure[]{\includegraphics[width=0.44\textwidth]{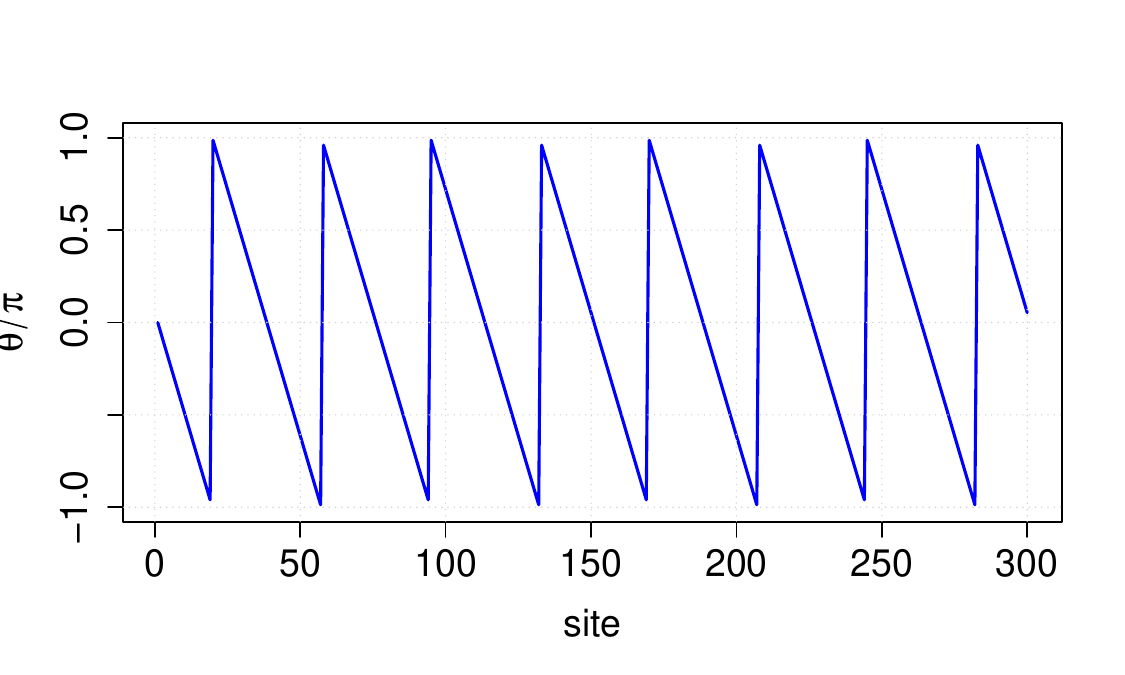}} 
    \subfigure[]{\includegraphics[width=0.44\textwidth]{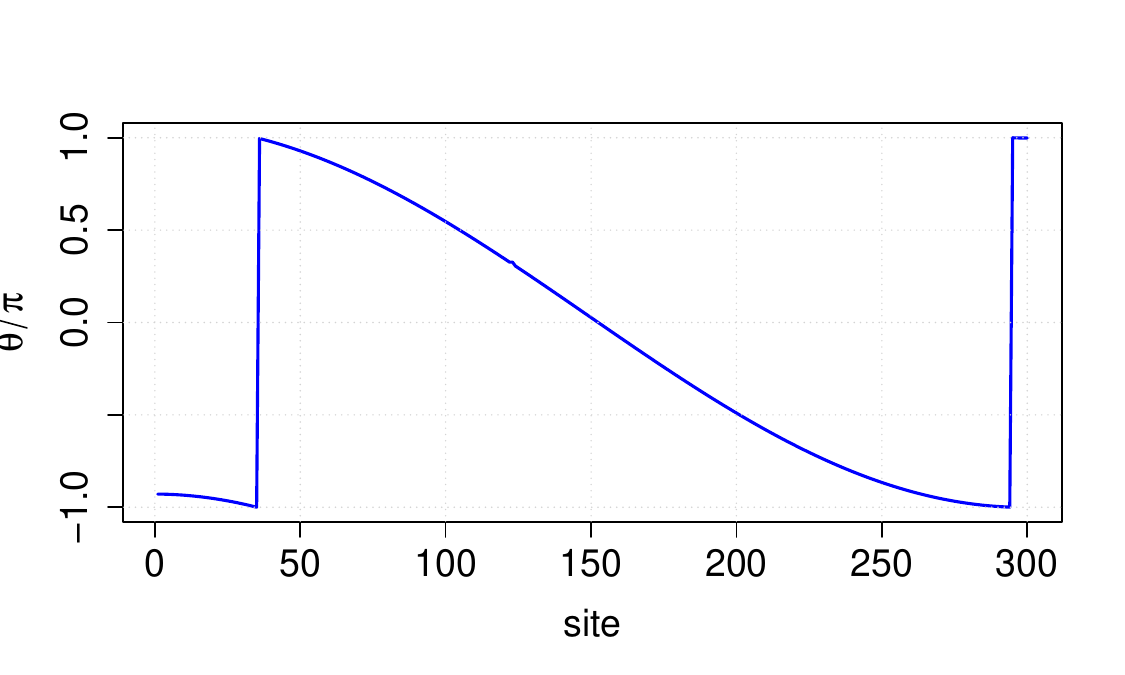}} 
    \subfigure[]{\includegraphics[width=0.44\textwidth]{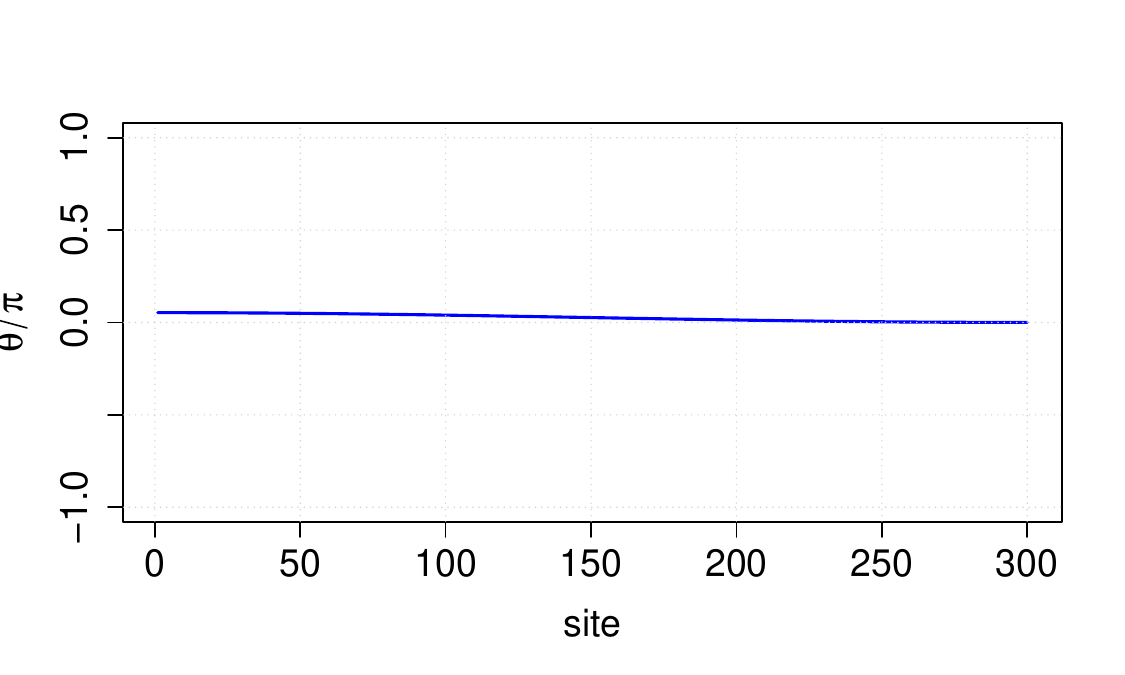}}
    \subfigure[]{\includegraphics[width=0.44\textwidth]{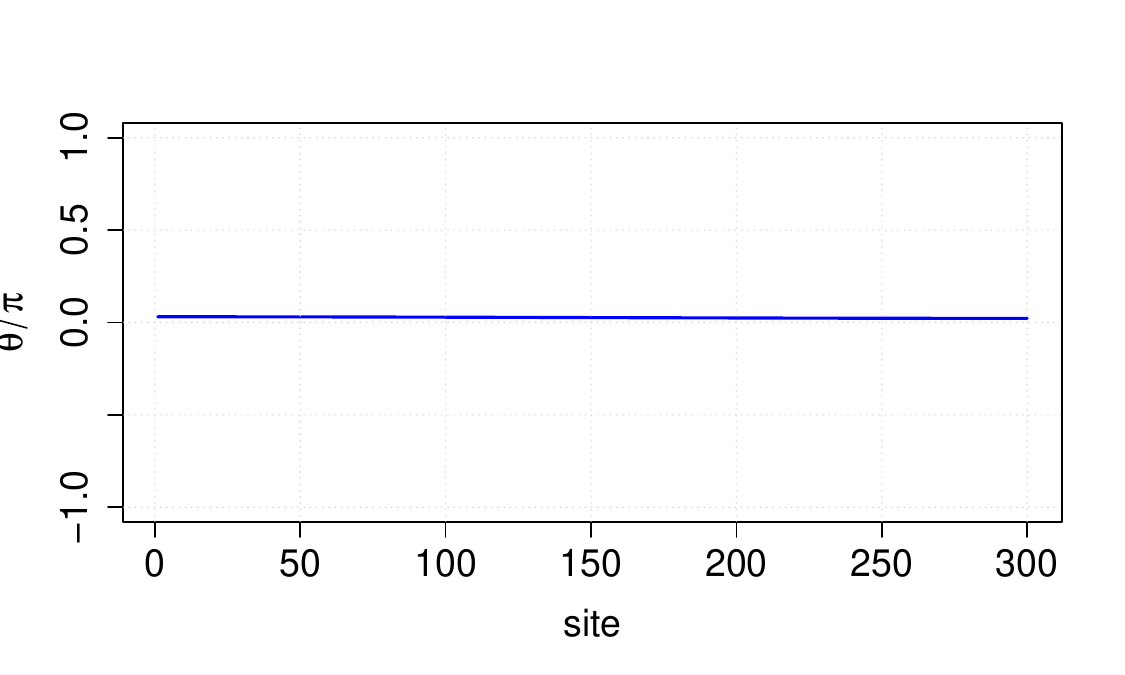}}
    \caption{
    Deterministic destruction of a winding configuration ($N=300$, $W=8$) under a cyclic schedule that updates only the interior edges $(1,2),(2,3),\dots,(N-1,N)$ and never the closing edge $(N,1)$. The $y$--axis shows $\theta_t(i)/\pi$.
    (a) $t=0$.
    (b) $t=5\times 10^6$.
    (c) $t=10^7$.
    (d) $t=2\times 10^7$ (consensus).
    }
    \label{fig:destroy}
\end{figure}

\begin{figure}[H]
    \centering
    \includegraphics[height=7cm]{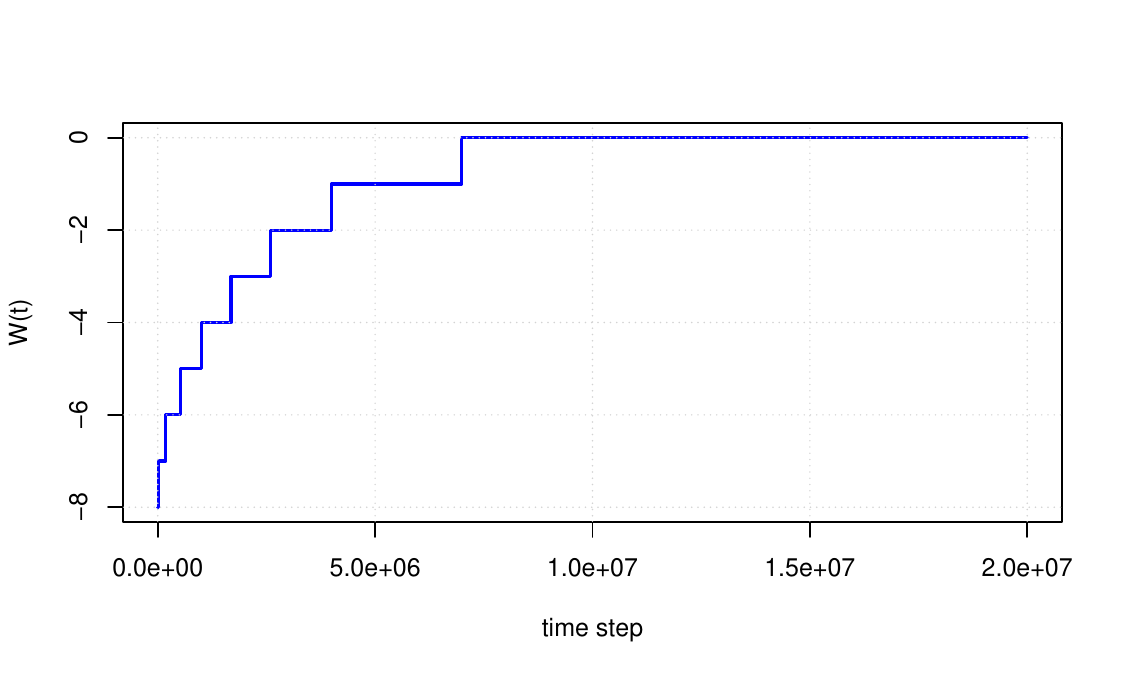}
    \caption{
    Evolution of the winding number for the simulation in Figure~\ref{fig:destroy}. The winding remains constant during the smoothing phase and decreases only when a branch crossing occurs.
    }
    \label{fig:destroy2}
\end{figure}
Panel~(a) of Figure~\ref{fig:destroy} shows the initial twisted profile, panels~(b)--(c) show progressive flattening, and panel~(d) shows collapse to consensus. The corresponding winding evolution is reported in Figure~\ref{fig:destroy2}: the winding stays constant throughout the smoothing phase and then drops at the first branch crossing; repeating the same mechanism eventually leads to winding $0$.

Let us explain heuristically the mechanism by which the dynamics escapes
from the winding manifold. Under
Assumption~\ref{asmp:no-antipodes}, that is for times $t<\tau$ before any
branch--crossing occurs, the update rule for the wrapped increments admits
a simple interpretation as a redistribution of a conserved quantity along
the edges of the ring. If edge $k$ is selected, the increment on that edge
is reset to zero while half of its value is transferred to each of its
neighbors:
\[
\delta_{t+1}(k)=0, \qquad
\delta_{t+1}(k\pm1)=\delta_t(k\pm1)+\tfrac12\,\delta_t(k),
\]
with all other increments unchanged. As long as $t<\tau$, the wrapping
operator introduces no $2\pi$ correction, so the total $W_t$ is conserved. The field $\delta_t$
therefore behaves as a conserved quantity that is transported along the
ring by the midpoint updates. This viewpoint clarifies the mechanism responsible for the eventual escape
from the winding configuration. Consider the deterministic
cyclic sweep in which the edges $(1,2),(2,3),\dots,(N\!-\!1,N)$ are updated
repeatedly while the closing edge $(N,1)$ is never selected. Each update
resets the increment on the chosen edge while redistributing its mass to
the neighboring increments. Since the total increment is conserved, the
repeated resets progressively push the increment mass toward the boundary
of the unique edge that is never updated. As a consequence, the increment
field becomes increasingly concentrated near $(N,1)$.

This mechanism is illustrated in
Figures~\ref{fig:destroy_theta}--\ref{fig:destroy_Spm}.
Figure~\ref{fig:destroy_theta} shows the evolution of the opinion
configuration $\theta(i)$ starting from a perfect winding profile.
While the global slope remains visible, the profile gradually flattens
away from the closing edge. The same evolution becomes more transparent in
terms of the increment field $\delta_t(i)$ shown in
Figure~\ref{fig:destroy_delta}. Because updated edges are repeatedly reset,
the increments away from $(N,1)$ remain small while the conserved total
increment accumulates near the closing edge, producing a growing spike in
$\delta_t(N)$. Finally,
Figure~\ref{fig:destroy_Spm} displays the associated corridor quantities
$S_-^{(t)}(k)=\delta_t(k-1)+\tfrac12\delta_t(k)$ and
$S_+^{(t)}(k)=\delta_t(k+1)+\tfrac12\delta_t(k)$.
As the concentration increases, these quantities approach the thresholds
$\pm\pi$. When one of them exits the interval $(-\pi,\pi)$, the no--branch
condition fails and a branch--crossing occurs. At that moment the winding
number can change, allowing the system to leave the winding
sector and continue its relaxation toward consensus.

% --------------------------- FIGURE 15: theta ---------------------------
\begin{figure}[H]
\centering
\subfigure[]{\scalebox{0.80}{\includegraphics[width=0.48\textwidth]{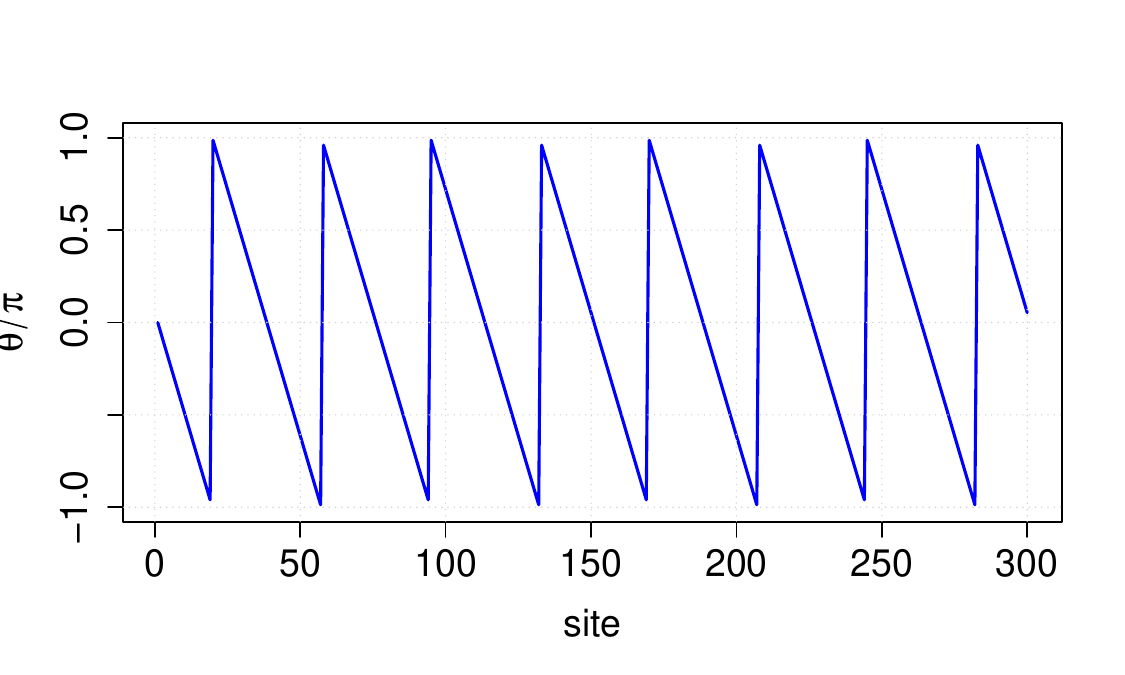}}\label{fig:destroy_theta_t0}}
\subfigure[]{\scalebox{0.80}{\includegraphics[width=0.48\textwidth]{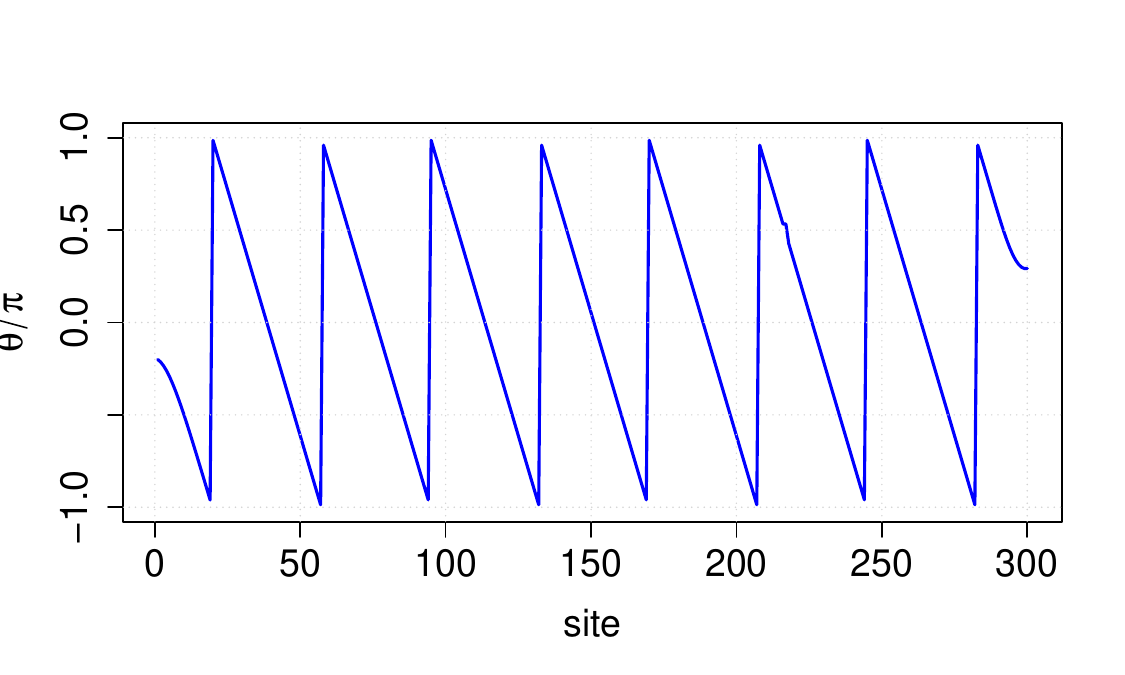}}\label{fig:destroy_theta_t1}}
\subfigure[]{\scalebox{0.80}{\includegraphics[width=0.48\textwidth]{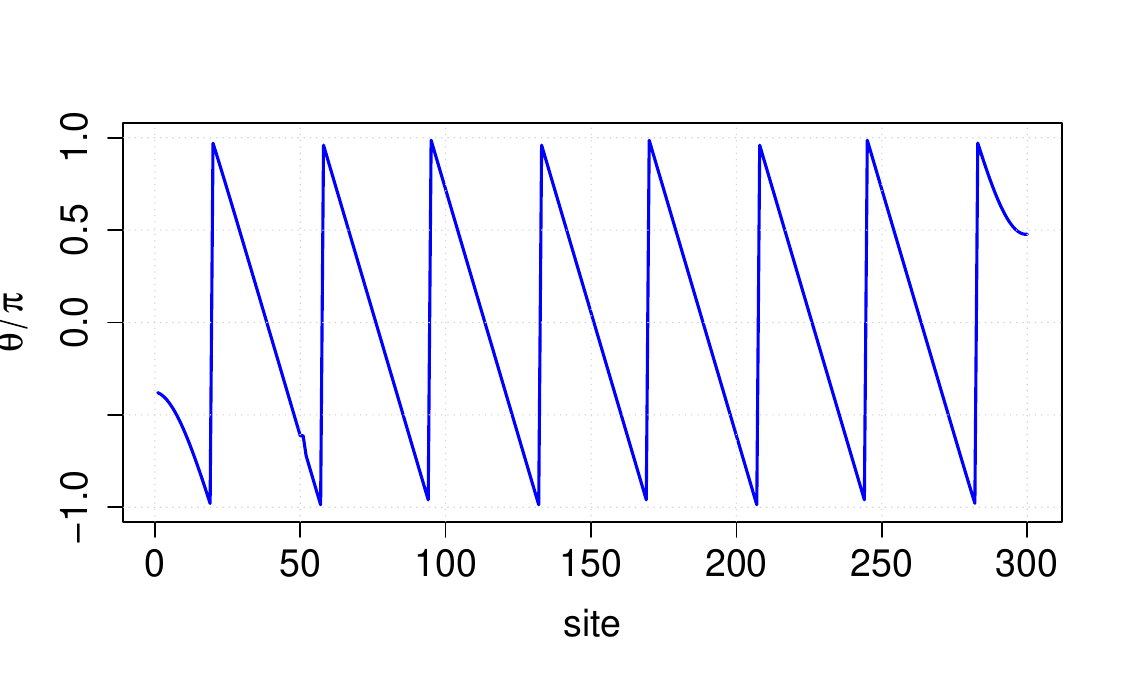}}\label{fig:destroy_theta_t3}}
\subfigure[]{\scalebox{0.80}{\includegraphics[width=0.48\textwidth]{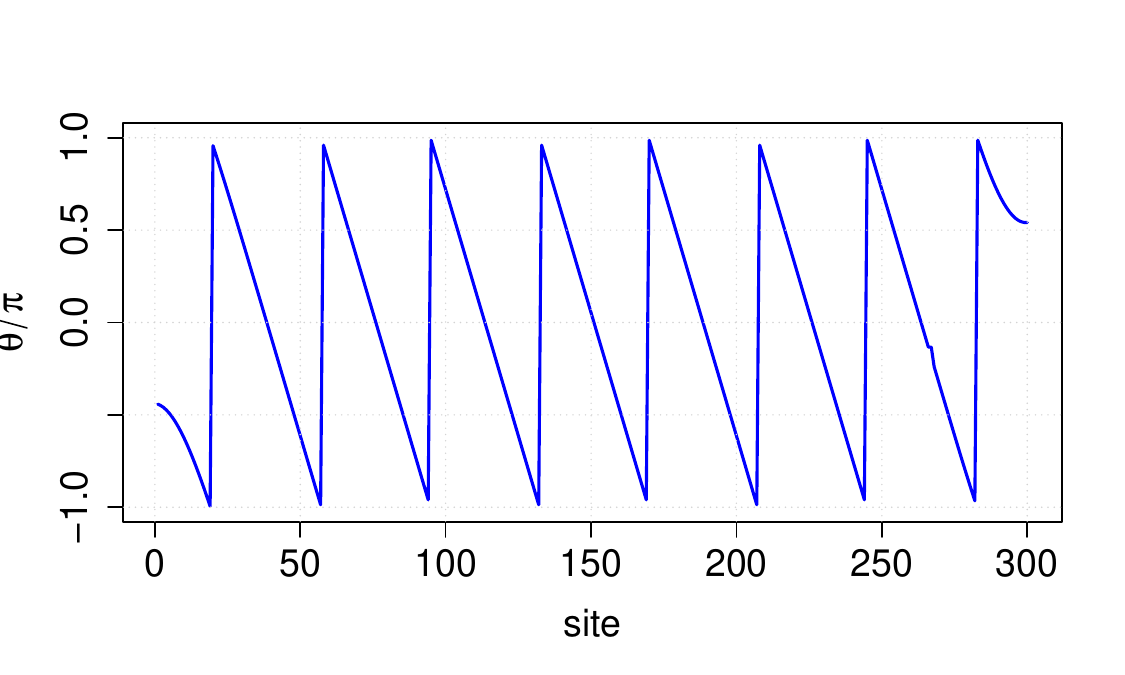}}\label{fig:destroy_theta_t4}}
\caption{Deterministic cyclic sweep on the ring with $N=300$ and prescribed winding $W_0=8$ over a horizon $T=20000$, where edges $(1,2),(2,3),\dots,(N\!-\!1,N)$ are updated cyclically and the closing edge $(N,1)$ is never selected. Shown are snapshots of $\theta(i)/\pi$ at $t_0=0$, $t_1=T/4$, $t_3=3T/4$, and $t_4=T$.}
\label{fig:destroy_theta}
\end{figure}

% --------------------------- FIGURE 16: delta ---------------------------
\begin{figure}[H]
\centering
\subfigure[]{\scalebox{0.80}{\includegraphics[width=0.48\textwidth]{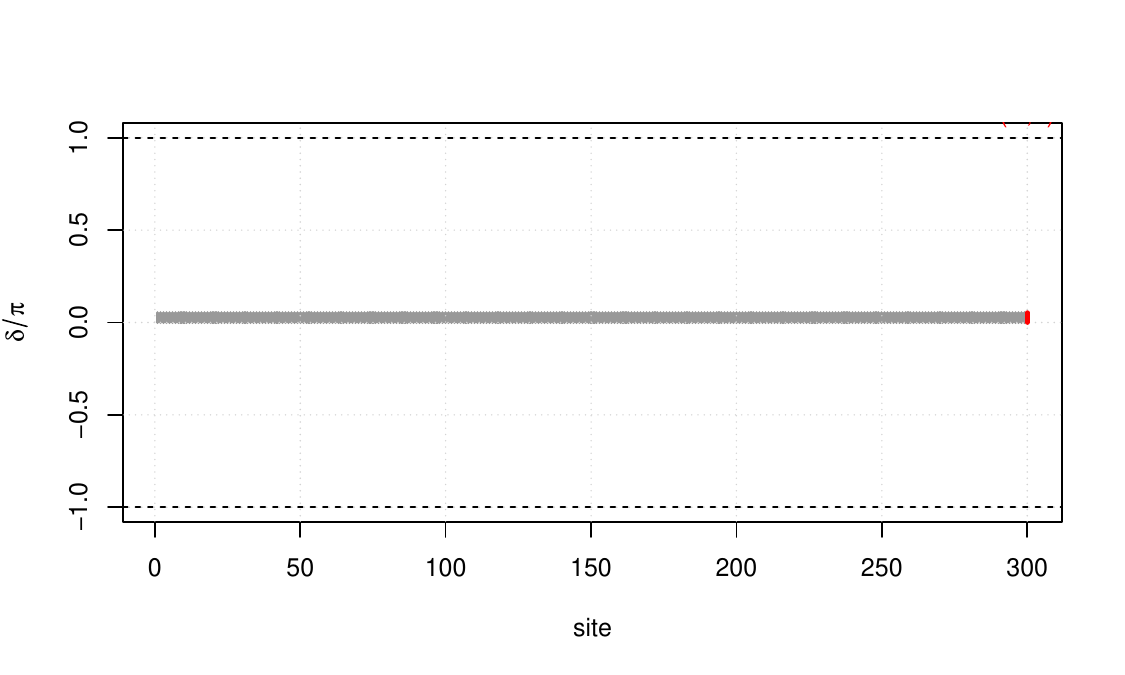}}\label{fig:destroy_delta_t0}}
\subfigure[]{\scalebox{0.80}{\includegraphics[width=0.48\textwidth]{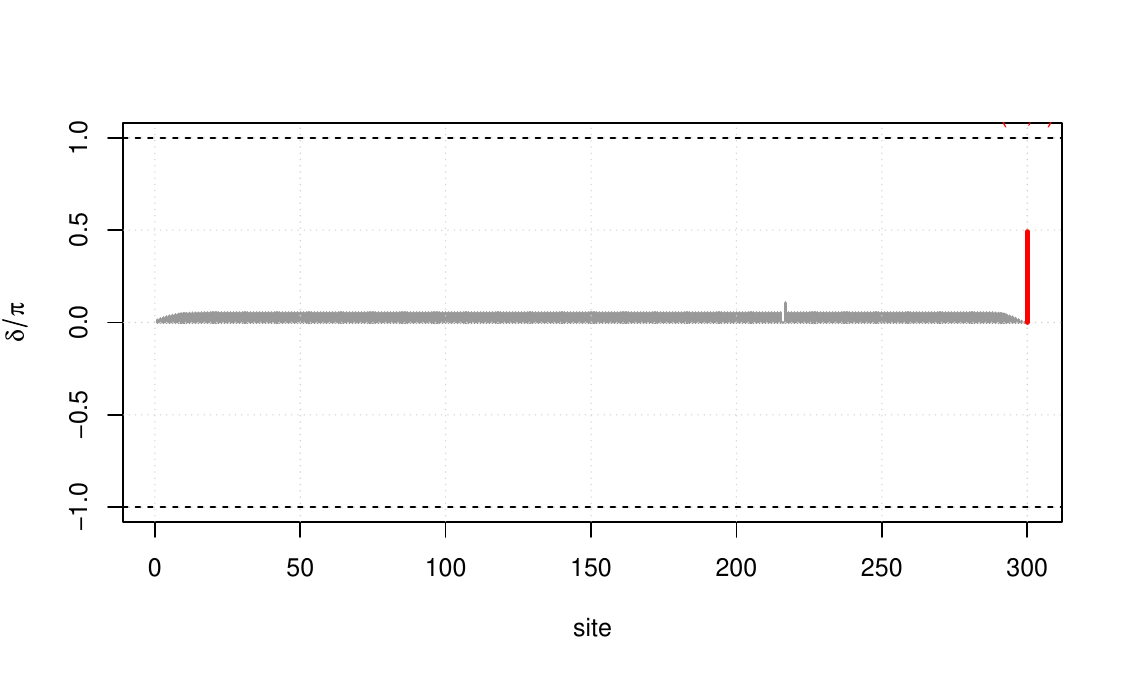}}\label{fig:destroy_delta_t1}}
\subfigure[]{\scalebox{0.80}{\includegraphics[width=0.48\textwidth]{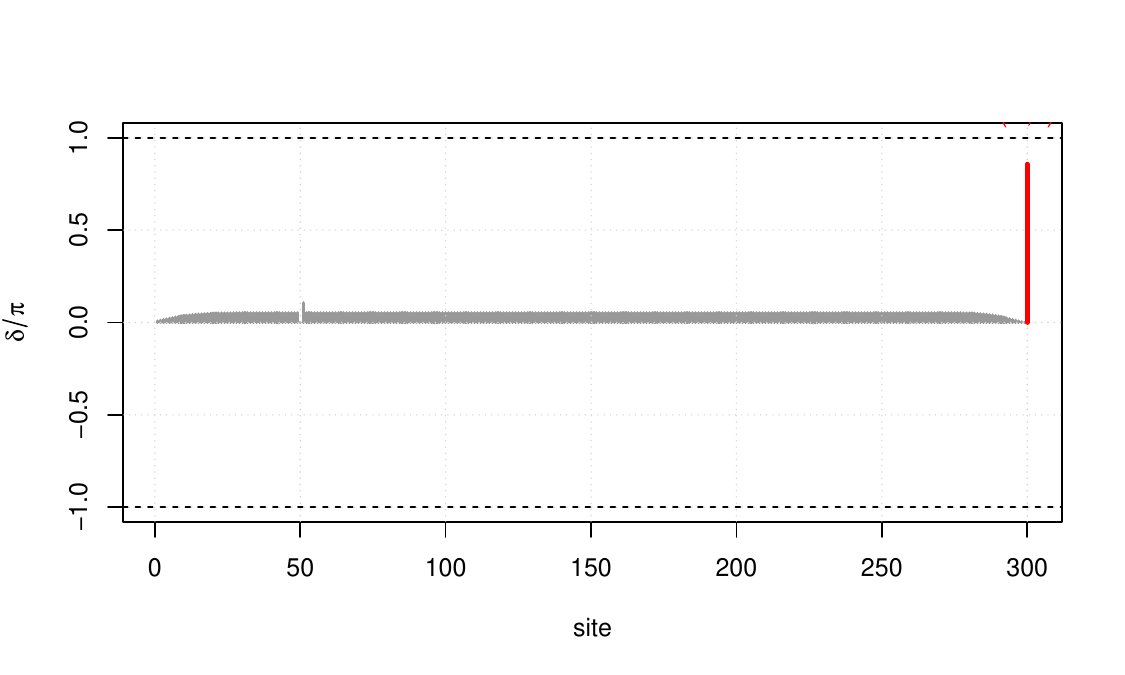}}\label{fig:destroy_delta_t3}}
\subfigure[]{\scalebox{0.80}{\includegraphics[width=0.48\textwidth]{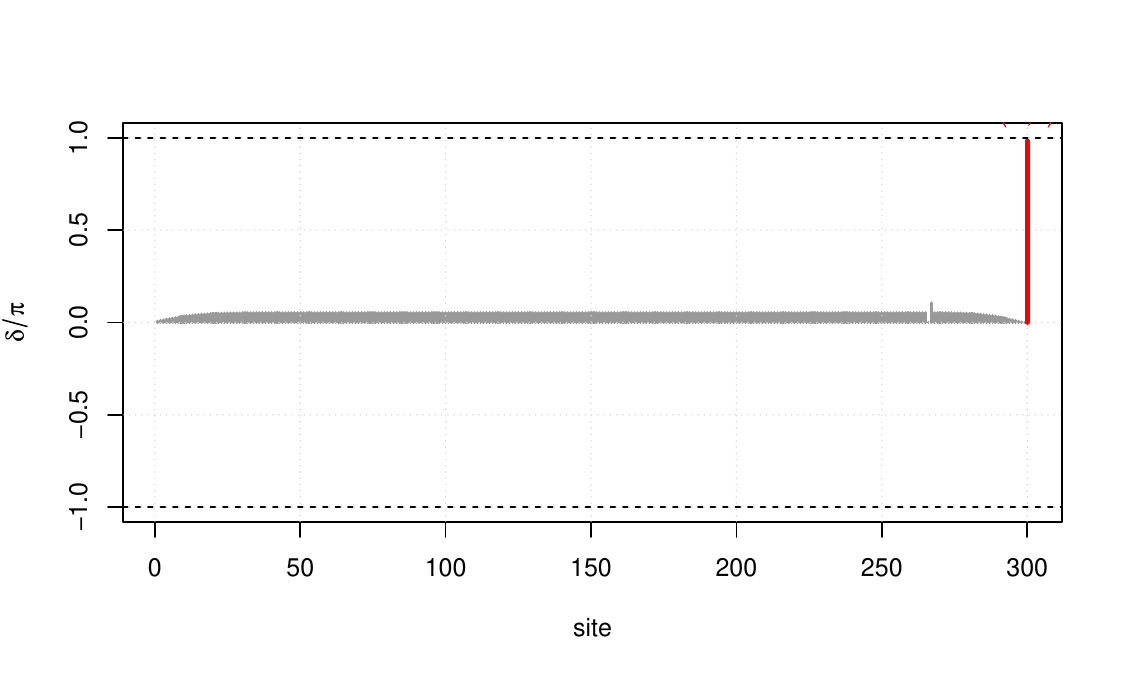}}\label{fig:destroy_delta_t4}}
\caption{Snapshots of the increment field $\delta_t(i)/\pi$ for the same deterministic sweep as in Figure~\ref{fig:destroy_theta}. Since updated edges are repeatedly reset while the total increment $\sum_{i=1}^N \delta_t(i)=2\pi W_0$ is conserved before any branch--crossing, the increment mass progressively concentrates near the un-updated closing edge $(N,1)$ (highlighted in red in the plots).}
\label{fig:destroy_delta}
\end{figure}

% --------------------------- FIGURE 17: S_pm ---------------------------
\begin{figure}[H]
\centering
\subfigure[]{\scalebox{0.95}{\includegraphics[width=0.48\textwidth]{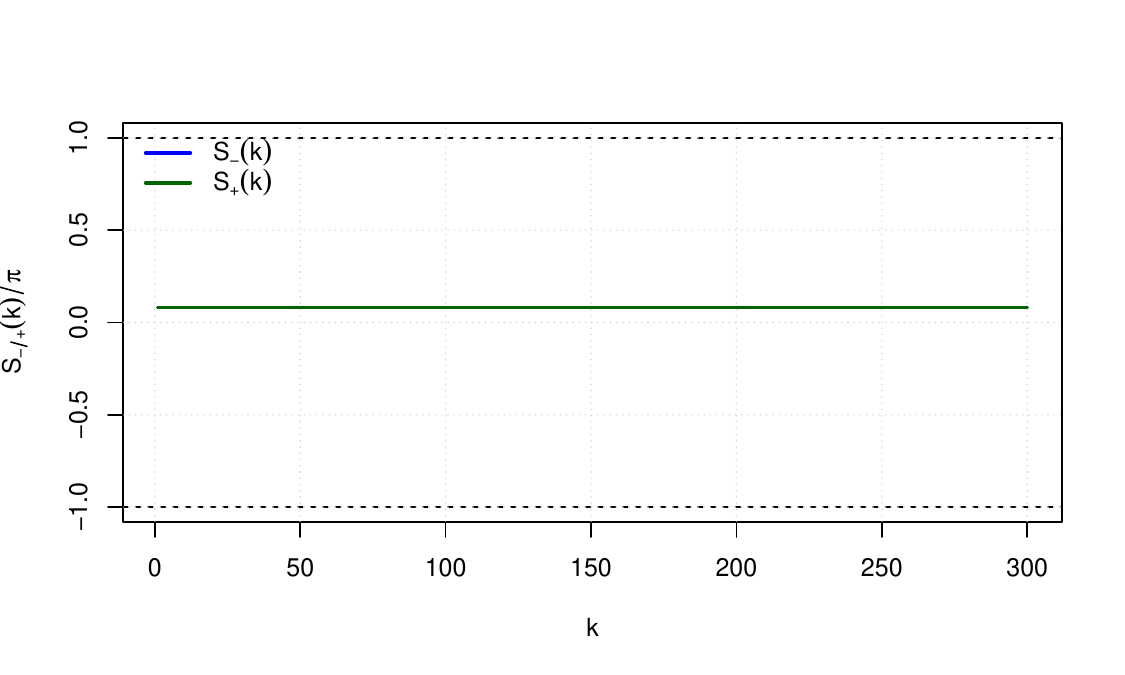}}\label{fig:destroy_Spm_t0}}
\subfigure[]{\scalebox{0.95}{\includegraphics[width=0.48\textwidth]{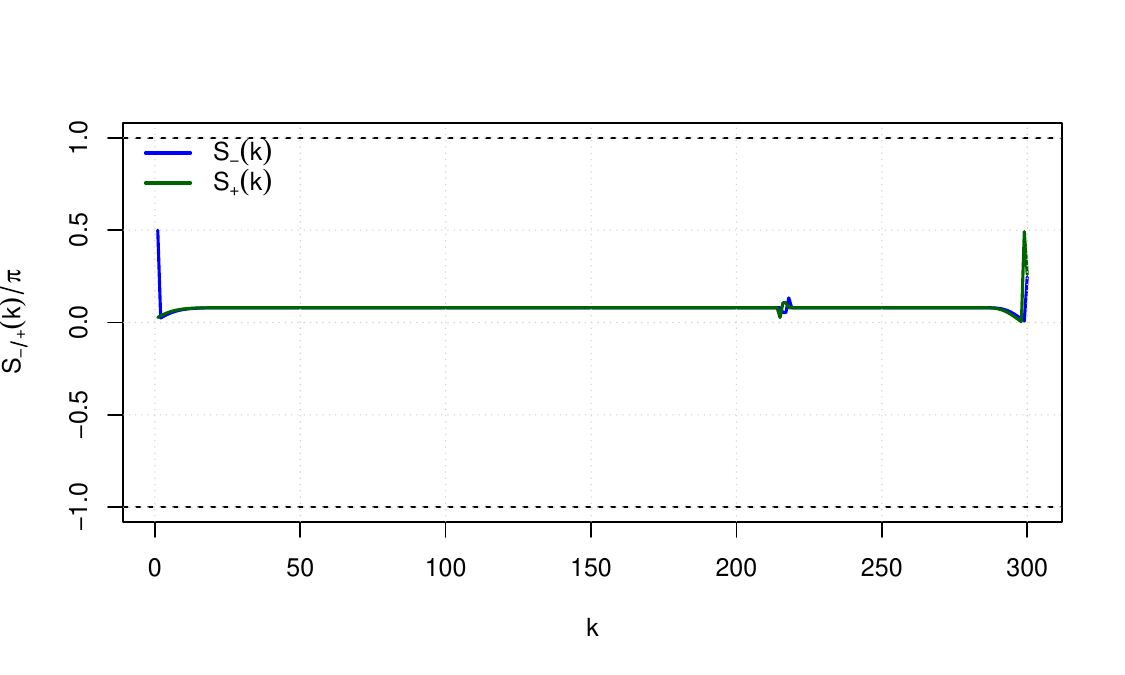}}\label{fig:destroy_Spm_t1}}
\subfigure[]{\scalebox{0.95}{\includegraphics[width=0.48\textwidth]{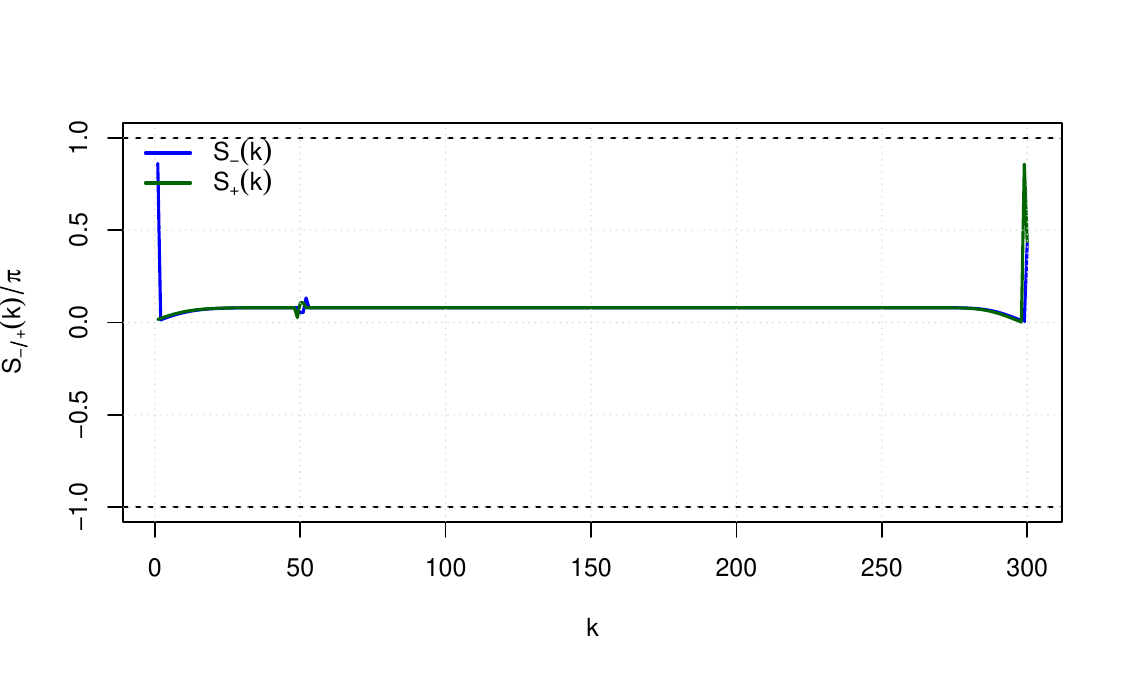}}\label{fig:destroy_Spm_t3}}
\subfigure[]{\scalebox{0.95}{\includegraphics[width=0.48\textwidth]{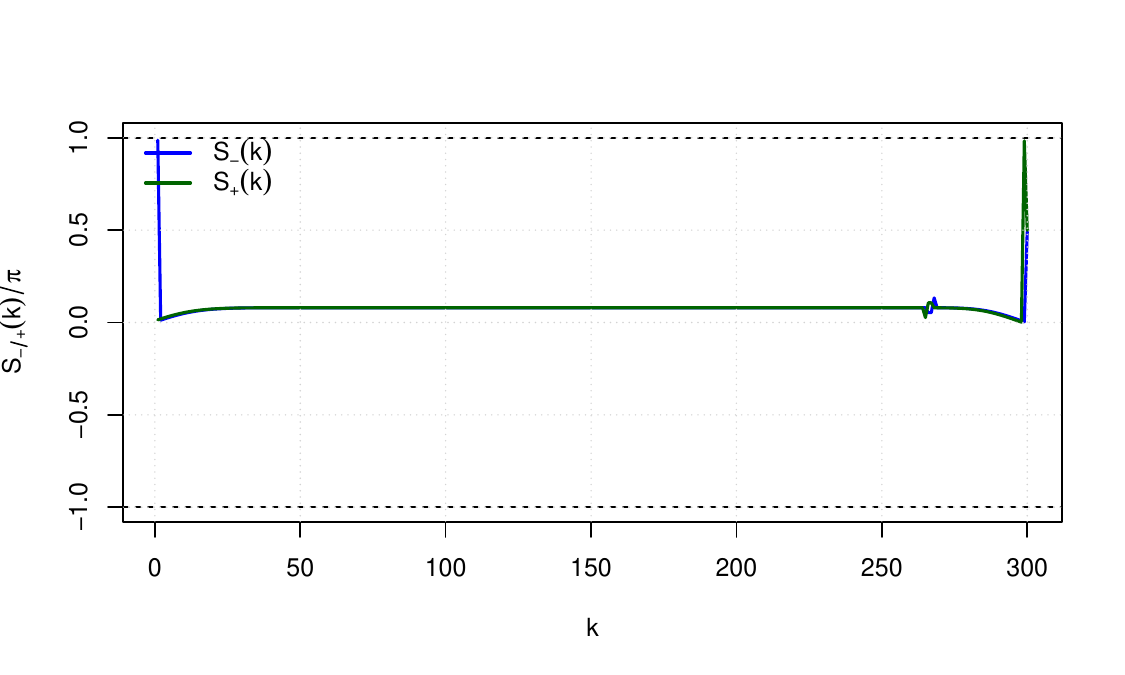}}\label{fig:destroy_Spm_t4}}
\caption{Snapshots of the corridor quantities $S_-^{(t)}(k)=\delta_t(k-1)+\tfrac12\delta_t(k)$ and $S_+^{(t)}(k)=\delta_t(k+1)+\tfrac12\delta_t(k)$ (shown rescaled by $\pi$) for the deterministic sweep in Figures~\ref{fig:destroy_theta}--\ref{fig:destroy_delta}. The dashed lines mark the thresholds $\pm 1$ (i.e.\ $\pm\pi$ before rescaling), so approaching these bounds indicates proximity to a branch--crossing event.}
\label{fig:destroy_Spm}
\end{figure}

To interpret the escape mechanism mathematically, consider a time window
during which the closing edge $\{N,1\}$ is never selected.
In lifted coordinates $\eta(i)\in\mathbb{R}$ the midpoint rule becomes
linear averaging, so over such a window the evolution coincides with
midpoint dynamics on the open path
(see Lemma~\ref{lem:lift-midpoint}).
We therefore analyze an explicit deterministic update pattern on the open path.

\begin{definition}[One cyclic sweep on the increment field]
\label{def:cyclic-sweep-delta}
Work under Assumption~\ref{asmp:no-antipodes}.
Let $\delta\in\R^N$ denote the wrapped increment field on the ring.
A \emph{cyclic sweep avoiding the closing edge} $(N,1)$ is the deterministic sequence of
updates
\[
(1,2),(2,3),\dots,(N-1,N),
\]
i.e.\ we apply the above increment-update rule successively for
$k=1,2,\dots,N-1$ and \emph{never} update the closing edge $(N,1)$.
Given an initial increment field $\delta^{[0]}\in\R^N$, we denote by
$\delta^{[1]}$ the increment field obtained after completing this sweep.
More generally, $\delta^{[m]}$ denotes the increment field after $m$ full sweeps.
\end{definition}
Under Assumption~\ref{asmp:no-antipodes}, i.e.\ for times $t<\tau$
before any branch--crossing occurs, wrapping introduces no $2\pi$
correction and the increment dynamics becomes purely linear.
Updating edge $k$ resets the corresponding increment to zero and
redistributes half of its value to each neighboring increment:
\[
\delta_{t+1}(k)=0,
\qquad
\delta_{t+1}(k\pm1)=\delta_t(k\pm1)+\tfrac12\,\delta_t(k),
\]
with all other increments unchanged.
In particular, the total increment
\[
\sum_{i=1}^N\delta_t(i)=2\pi W
\]
is conserved.
Thus the field $\delta_t$ may be viewed as a conserved \emph{increment mass}
transported along the ring by the midpoint updates.

The cyclic sweep of Definition~\ref{def:cyclic-sweep-delta} provides an explicit
instance of this transport mechanism.
Since the closing edge $(N,1)$ is never updated during the sweep,
increments on edges $1,\dots,N-1$ are repeatedly reset and their mass is
redistributed to neighboring edges.
Because the total increment is conserved, this redistribution progressively
pushes the mass toward the boundary of the unique edge that is never reset,
namely $(N,1)$.
The following lemma quantifies this accumulation after one sweep.

\begin{lemma}[Increment accumulation under one cyclic sweep]
\label{lem:cyclic-sweep-accumulation}
Assume Assumption~\ref{asmp:no-antipodes} holds.
Let us consider the \emph{cyclic sweep} defined in Definition~\ref{def:cyclic-sweep-delta}. Then the closing increment satisfies the identity:
\begin{equation}
\label{eq:deltaN-exact-correct}
\delta^{[1]}(N)
=
\delta^{[0]}(N)
+\Big(\frac12+2^{-(N-1)}\Big)\delta^{[0]}(1)
+\sum_{j=2}^{N-1}2^{-(N-j)}\,\delta^{[0]}(j).
\end{equation}
In particular, one has the lower bound
\begin{equation}
\label{eq:deltaN-lower-correct}
\delta^{[1]}(N)
\ge
\delta^{[0]}(N)+2^{-(N-2)}\sum_{j=1}^{N-1}\delta^{[0]}(j).
\end{equation}
\end{lemma}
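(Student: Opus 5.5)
The identity \eqref{eq:deltaN-exact-correct} follows by tracking the linear increment-redistribution rule step by step through the sweep. By Assumption~\ref{asmp:no-antipodes} and Theorem~\ref{thm:W-conditional}, no update in the sweep produces a $2\pi$ correction (we are working before $\tau$). So each update of edge $k$ acts linearly: $\delta(k)\mapsto 0$ and $\delta(k\pm1)\mapsto \delta(k\pm1)+\tfrac12\delta(k)$, with cyclic indices. In particular, the update of edge $1$ feeds edge $N$ through its left neighbour $k-1=0\equiv N$.

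\textbf{Values at the moment of update.} Let $c_k$ be the value of $\delta(k)$ just before edge $k$ is updated, for $k=1,\dots,N-1$. Before its own update, edge $k$ has received a contribution only from the update of edge $k-1$; the update of edge $k+1$ comes later. Hence
\[
c_1=\delta^{[0]}(1),\qquad c_k=\delta^{[0]}(k)+\tfrac12 c_{k-1}\quad (2\le k\le N-1),
\]
and induction gives $c_k=\sum_{j=1}^{k}2^{-(k-j)}\delta^{[0]}(j)$.

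\textbf{The closing increment.} Edge $N$ is never reset. It receives exactly two contributions: $\tfrac12 c_1$ from the first update and $\tfrac12 c_{N-1}$ from the last. Therefore
\[
\delta^{[1]}(N)=\delta^{[0]}(N)+\tfrac12\delta^{[0]}(1)+\sum_{j=1}^{N-1}2^{-(N-j)}\delta^{[0]}(j).
\]
Separating the $j=1$ term gives the coefficient $\tfrac12+2^{-(N-1)}$ in front of $\delta^{[0]}(1)$, which is \eqref{eq:deltaN-exact-correct}. The degenerate case $N=2$, where edges $1$ and $N-1$ coincide, is checked directly.

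\textbf{The lower bound.} For $2\le j\le N-1$ we have $2^{-(N-j)}\ge 2^{-(N-2)}$. For $j=1$ we have $\tfrac12+2^{-(N-1)}\ge 2^{-(N-2)}$ whenever $N\ge2$. So every coefficient dominates $2^{-(N-2)}$, and \eqref{eq:deltaN-lower-correct} follows by termwise comparison.

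\textbf{Main obstacle.} The termwise comparison only works if the increments $\delta^{[0]}(j)$, $j\le N-1$, are nonnegative. For increments of mixed sign the inequality can fail. I would therefore state this hypothesis explicitly. It is the natural setting of a twisted profile with $W_0>0$, where the profile is monotone; the case $W_0<0$ is symmetric after reversing signs. The second point to keep explicit is that the whole sweep must remain before $\tau$, so that the linear rule is legitimate at every step.
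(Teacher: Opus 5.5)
Your proof is correct and follows the same route as the paper, which also tracks how each $\delta^{[0]}(j)$ is passed rightward through the sweep, halving at each transfer. Your recursion $c_k=\delta^{[0]}(k)+\tfrac12 c_{k-1}$, combined with $\delta^{[1]}(N)=\delta^{[0]}(N)+\tfrac12 c_1+\tfrac12 c_{N-1}$, is a tidier bookkeeping of that same picture. Your caveat is also right: the paper's one-line deduction of \eqref{eq:deltaN-lower-correct} from ``the smallest coefficient is $2^{-(N-2)}$'' tacitly assumes $\delta^{[0]}(j)\ge0$ for $j\le N-1$ (for $N=3$ the bound is equivalent to $\delta^{[0]}(1)\ge0$), a hypothesis that does hold in its only application, Theorem~\ref{thm:monotone-accumulation}.
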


\begin{proof}
Work under Assumption~\ref{asmp:no-antipodes}, so that during the sweep the
increment dynamics is linear and no wrapping corrections occur.
Let $\delta^{(k)}$ denote the increment field after the $k$-th update within
the sweep, where the updates are performed in the order
$(1,2),(2,3),\dots,(N-1,N)$, and $\delta^{(0)}=\delta^{[0]}$.
After the sweep we have $\delta^{[1]}=\delta^{(N-1)}$.

When edge $k$ is updated, the increment $\delta(k)$ is reset to $0$ and its
value is redistributed equally to the neighboring edges $k-1$ and $k+1$.
Since the sweep proceeds in increasing order of $k$, any increment created on
an edge to the right of $k$ may still propagate further to the right during
the same sweep, whereas increments created to the left cannot propagate
further because the corresponding edges have already been updated.

Consequently the sweep transports mass only toward increasing indices.
In particular, the value of $\delta^{[1]}(N)$ consists of the initial mass
$\delta^{[0]}(N)$ together with all contributions that are transported to
edge $N$ during the sequence of updates.

Consider an initial increment $\delta^{[0]}(j)$ with $2\le j\le N-1$.
When edge $j$ is updated, the rule places a contribution
$\tfrac12\delta^{[0]}(j)$ on edge $j+1$.
At the subsequent update of edge $j+1$, half of this quantity is transmitted
further to edge $j+2$, and this process continues deterministically along the
remaining updates of the sweep.
After $(N-j)$ such transfers the mass arriving at edge $N$ equals
$2^{-(N-j)}\delta^{[0]}(j)$.

The case $j=1$ is slightly different because edge $1$ has two neighbors.
The update of edge $1$ sends $\tfrac12\delta^{[0]}(1)$ directly to edge $N$,
while the other half is placed on edge $2$ and then propagates to the right
through the remaining updates of the sweep.
Starting from edge $2$, reaching edge $N$ requires $(N-2)$ further transfers,
so this second contribution equals $2^{-(N-1)}\delta^{[0]}(1)$.

Since the closing edge $(N,1)$ is never updated during the sweep,
any mass that reaches edge $N$ remains there until the end.
Summing the initial mass $\delta^{[0]}(N)$ with all transported contributions
yields~\eqref{eq:deltaN-exact-correct}.
The lower bound~\eqref{eq:deltaN-lower-correct} follows because the smallest
coefficient multiplying $\delta^{[0]}(j)$ for $j\in\{1,\dots,N-1\}$ is
$2^{-(N-2)}$.
\end{proof}

\begin{theorem}[Monotone accumulation at the closing edge]
\label{thm:monotone-accumulation}
Work under Assumption~\ref{asmp:no-antipodes} (no branch--crossing), and consider the \emph{cyclic sweep} defined in Definition~\ref{def:cyclic-sweep-delta}. Let $\delta^{[m]}$ be the increment
field after $m$ full sweeps. Assume that the initial increments are nonnegative,
\begin{equation}\label{eq:nonneg-init}
\delta^{[0]}(i)\ge 0\qquad \text{for all } i=1,\dots,N.
\end{equation}
Then $\delta^{[m]}(i)\ge 0$ for all $m\ge 0$ and all $i$, and the sequence
$m\mapsto \delta^{[m]}(N)$ is nondecreasing. Moreover, with
\[
c:=2^{-(N-2)},
\]
one has for every $m\ge 0$,
\begin{equation}\label{eq:monotone-gap}
m(\theta)-\delta^{[m]}(N)
\;\le\;
(1-c)^m\big(m(\theta)-\delta^{[0]}(N)\big),
\end{equation}
and equivalently,
\begin{equation}\label{eq:monotone-lower}
\delta^{[m]}(N)
\;\ge\;
m(\theta)-(1-c)^m\big(m(\theta)-\delta^{[0]}(N)\big),
\end{equation}
and $m(\theta)$ was defined in \eqref{e:w}.
\end{theorem}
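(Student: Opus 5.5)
The plan is to run an induction on the number of sweeps $m$, using Lemma~\ref{lem:cyclic-sweep-accumulation} as the one-step engine. The only extra ingredients needed are preservation of nonnegativity and conservation of the total increment $m(\theta)=\sum_i\delta(i)$.

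\emph{Step 1: nonnegativity.} Under Assumption~\ref{asmp:no-antipodes} each single update on edge $k$ acts linearly:
\[
\delta(k)\mapsto 0,\qquad \delta(k\pm1)\mapsto\delta(k\pm1)+\tfrac12\delta(k),
\]
and every other increment is unchanged. All coefficients in this map are nonnegative, so a nonnegative field stays nonnegative after one update. Iterating over the $N-1$ updates of a sweep, and then over sweeps, gives $\delta^{[m]}(i)\ge0$ for all $m$ and $i$.

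\emph{Step 2: conservation.} The same map preserves $\sum_i\delta(i)$, because the mass $\delta(k)$ removed from edge $k$ is split equally between the two neighbors. Hence $\sum_i\delta^{[m]}(i)=m(\theta)$ for all $m$. The paper's $m(\theta)$ refers to the initial configuration, which is consistent since the total is invariant.

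\emph{Step 3: monotonicity.} Apply Lemma~\ref{lem:cyclic-sweep-accumulation} with $\delta^{[m]}$ in place of $\delta^{[0]}$. This is legitimate because the lemma concerns only a deterministic linear sweep, whatever field it starts from. By \eqref{eq:deltaN-exact-correct}, $\delta^{[m+1]}(N)-\delta^{[m]}(N)$ is a nonnegative combination of the entries $\delta^{[m]}(j)$, $j\le N-1$. By Step~1 it is therefore $\ge0$, so $m\mapsto\delta^{[m]}(N)$ is nondecreasing.

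\emph{Step 4: geometric gap.} The lower bound \eqref{eq:deltaN-lower-correct} applied at sweep $m$ gives
\[
\delta^{[m+1]}(N)\ge\delta^{[m]}(N)+c\sum_{j=1}^{N-1}\delta^{[m]}(j)=\delta^{[m]}(N)+c\big(m(\theta)-\delta^{[m]}(N)\big),
\]
where the equality uses Step~2. Set $g_m:=m(\theta)-\delta^{[m]}(N)$. Subtracting both sides from $m(\theta)$ yields $g_{m+1}\le(1-c)g_m$. Since $c\in(0,1]$ and $g_m\ge0$ by Step~1, induction gives \eqref{eq:monotone-gap}, and rearranging gives \eqref{eq:monotone-lower}.

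The only delicate point is Step~3: Lemma~\ref{lem:cyclic-sweep-accumulation} must be reusable for every sweep. This requires the no-branch-crossing linearity to hold throughout all $m$ sweeps, which is exactly what the standing Assumption~\ref{asmp:no-antipodes} provides. Beyond that, the argument reduces to checking that the coefficients in \eqref{eq:deltaN-exact-correct} are positive and that the smallest of them is $c$, both of which the lemma already records.
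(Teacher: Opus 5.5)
Your proposal is correct and follows essentially the same route as the paper. Nonnegativity and conservation of the total increment are checked update by update, Lemma~\ref{lem:cyclic-sweep-accumulation} is reapplied with $\delta^{[m]}$ as input, and the one-step bound $g_{m+1}\le(1-c)g_m$ is iterated. One point worth stating explicitly is that \eqref{eq:deltaN-lower-correct} follows from the exact identity \eqref{eq:deltaN-exact-correct} only for nonnegative input fields, so its use at sweep $m$ genuinely relies on your Step~1.
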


\begin{proof}
Under Assumption~\ref{asmp:no-antipodes}, updating an edge $k$ replaces $\delta(k)$ by $0$
and adds $\tfrac12\delta(k)$ to each neighbor $\delta(k\pm1)$, leaving all other
increments unchanged. If $\delta(i)\ge 0$ for all $i$ before the update, then after the
update we still have nonnegativity: one coordinate is set to $0$ and the two neighbors
increase by a nonnegative amount. Therefore nonnegativity is preserved by each single
update, and \eqref{eq:nonneg-init} implies $\delta^{[m]}(i)\ge 0$ for all $m$ and $i$.

Since no wrapping corrections occur, the total increment is conserved by each update,
hence by each sweep:
\[
\sum_{i=1}^N \delta^{[m]}(i)=m(\theta)
\qquad\text{for all } m\ge 0.
\]

Apply Lemma~\ref{lem:cyclic-sweep-accumulation}  to the increment field $\delta^{[m]}$
as the input for the next sweep. The lemma gives the exact representation
\[
\delta^{[m+1]}(N)
=
\delta^{[m]}(N)
+\sum_{j=1}^{N-1} a_j\,\delta^{[m]}(j),
\]
where the coefficients $a_j$ depend only on $N$ and satisfy $a_j\ge c=2^{-(N-2)}$ for
all $j=1,\dots,N-1$. Since $\delta^{[m]}(j)\ge 0$, the sum on the right-hand side is
nonnegative, and thus $\delta^{[m+1]}(N)\ge \delta^{[m]}(N)$. This proves that
$m\mapsto \delta^{[m]}(N)$ is nondecreasing.

Using again $a_j\ge c$ yields
\[
\delta^{[m+1]}(N)
\ge
\delta^{[m]}(N)
+c\sum_{j=1}^{N-1}\delta^{[m]}(j).
\]
By conservation of the total increment,
\[
\sum_{j=1}^{N-1}\delta^{[m]}(j)=m(\theta)-\delta^{[m]}(N),
\]
so we obtain
\[
\delta^{[m+1]}(N)
\ge
\delta^{[m]}(N)+c\big(m(\theta)-\delta^{[m]}(N)\big).
\]
Rearranging gives the one-step contraction
\[
m(\theta)-\delta^{[m+1]}(N)\le (1-c)\big(m(\theta)-\delta^{[m]}(N)\big).
\]
Iterating this inequality over $m$ sweeps yields \eqref{eq:monotone-gap}, and
\eqref{eq:monotone-lower} follows immediately by rearranging.
Finally, since $\delta^{[m]}(N)$ is nondecreasing and bounded above by $m(\theta)$,
it converges, and \eqref{eq:monotone-gap} forces the limit to be $m(\theta)$.
\end{proof}
Theorem~\ref{thm:monotone-accumulation} formalizes the heuristic mass--transport
picture described above. Starting from a twisted configuration, the increments
are initially uniform and positive, $\delta^{[0]}(i)=\beta>0$, and therefore
remain nonnegative throughout the evolution as long as no branch--crossing
occurs. In this regime each sweep redistributes the increments deterministically
along the chain while preserving the total mass $m(\theta)$. Since the closing
edge $(N,1)$ is never updated, it acts as an absorbing boundary for the
transport process: increments arriving at edge $N$ are never redistributed
again during the sweep. As a consequence, successive sweeps progressively
concentrate the increment mass at the closing edge. Theorem
\ref{thm:monotone-accumulation} shows that this accumulation is monotone and
that the deficit $m(\theta)-\delta^{[m]}(N)$ decays geometrically in the number
of sweeps. In particular, repeated sweeps drive the increment at the closing
edge toward the total increment $m(\theta)=2\pi W(\theta)$, thereby creating a
large local gradient near $(N,1)$ and providing a deterministic mechanism that
can eventually trigger a branch--crossing.

A quantitative analysis of the exit time from a winding sector
would require controlling rare update sequences that produce
branch–crossings. This lies beyond the scope of the present work
and will be addressed elsewhere.
%
%
%----------------------------------------
% SECTION 5: DISCUSSION AND FUTURE DEVELOPMENTS
%----------------------------------------
%
%
%
%

\section{Discussion and future developments}
\label{sec:future}

The analysis presented here clarifies how local averaging on a circular opinion
space generates trapping winding configurations characterized by a non-zero winding
number. In these states the dynamics remains locally contractive and a
Lyapunov functional decreases monotonically, yet the global topology of the circle
prevents convergence to consensus. The system therefore relaxes toward a
twisted profile that preserves the winding number and fluctuates near this
configuration for long time intervals before a rare branch--crossing event
changes the winding sector. The constructive mechanism described in
Section~\ref{sec:escape-heur} shows explicitly how such winding states can be
destroyed through the accumulation of increment mass near an edge, eventually
triggering a branch crossing and allowing the system to unwind. These results suggest several directions for further investigation.

A first direction concerns the quantitative characterization of the
\emph{trapping winding regime}. While the winding number already provides a global
topological invariant, it is useful to complement it with an observable that
measures the proximity of the configuration to a twisted profile. One natural
candidate is the \emph{circular correlation}
\[
r(t)=\frac1N\sum_{j=1}^N e^{i\theta_t(j)},
\]
which plays a role analogous to the Kuramoto order parameter (\cite{Acebron2005}). In a consensus
state one has $r(t)\approx 1$, while in a twisted configuration the complex
phases cancel and $r(t)$ remains small. Monitoring the joint evolution of
$r(t)$ and the winding number therefore provides a practical diagnostic for
detecting winding sectors and identifying the moments when
branch--crossing events occur.

A second natural extension concerns the role of \emph{noise}. Introducing small
stochastic perturbations in the update rule, for instance by adding a random
angular displacement after each averaging step, would model communication
errors or spontaneous opinion changes. Such perturbations are expected to
induce rare transitions between winding sectors, leading to a trapping
dynamics similar to that observed in stochastic Kuramoto-type systems.
Understanding the statistics of these transitions and their dependence on the
system size could reveal large-deviation principles governing the escape time
from winding manifolds.

Another direction is to explore \emph{block updates} in which several adjacent
agents simultaneously average their opinions. This modification would provide
a natural coarse-grained analogue of the midpoint rule and could significantly
accelerate the redistribution of increment mass along the ring. Studying how
the escape mechanism identified in Section~\ref{sec:escape-heur} is modified
under such block interactions may shed light on the robustness of winding under different microscopic update schemes.

Finally, it would be interesting to introduce \emph{asymmetric noise} or biased
perturbations in the update rule. In oscillator models such as the Kuramoto
system, asymmetries and noise can produce persistent oscillatory regimes,
sometimes referred to as \emph{flip--flop} states, in which the collective
phase repeatedly switches direction. Investigating whether analogous regimes
appear in circular opinion dynamics could reveal new forms of collective
behavior arising from the interplay between topology, stochasticity, and local
consensus interactions.

More broadly, the present work highlights the delicate balance between local
consensus forces and global topological constraints. Extending these ideas to
heterogeneous interaction graphs, higher-dimensional manifolds, or continuum
limits may reveal new connections between opinion dynamics, synchronization
theory, and the topology of interacting systems. Understanding how topology,
stochasticity, and heterogeneity combine to shape collective behavior on
nontrivial manifolds remains a challenging and promising direction for future
research.

%
%
%----------------------------------------
% APPENDIX A: PROOF OF LEMMA 4.5
%----------------------------------------
%
%

\appendix
\section{Proof of Lemma~\ref{lem:first-crossing-probability}}
\label{a:first}
\begin{proof}
Set
\[
X:=\delta_0(k-1),\qquad Y:=\delta_0(k),\qquad Z:=\delta_0(k+1).
\]
A midpoint update at edge $k$ sets the updated increment to $0$ and adds half of the former increment on $(k,k+1)$ to each neighboring increment. Thus a branch-crossing at time $0$ occurs if and only if at least one of the two neighbor quantities leaves the principal interval:
\begin{equation}\label{eq:crossing-event}
\{\text{branch-crossing at time }0\}
=
\Big\{\bigl|X+\tfrac12 Y\bigr|>\pi\Big\}\ \cup\ \Big\{\bigl|Z+\tfrac12 Y\bigr|>\pi\Big\}.
\end{equation}

We begin by computing the law of a wrapped difference. Let $\Theta_1,\Theta_2$ be i.i.d.\ $\mathrm{Unif}([-\pi,\pi))$ and set
\[
D:=\Theta_2-\Theta_1.
\]
The density of $D$ is obtained by the convolution formula for differences:
\[
f_D(w)
=
\int_{\mathbb R} f_{\Theta_1}(x)\,f_{\Theta_2}(x+w)\,dx.
\]
Since $f_{\Theta_i}(x)=\frac{1}{2\pi}\mathbf 1_{[-\pi,\pi)}(x)$, this becomes
\[
f_D(w)=\frac{1}{4\pi^2}\int_{\mathbb R}
\mathbf 1_{[-\pi,\pi)}(x)\,\mathbf 1_{[-\pi,\pi)}(x+w)\,dx
=
\frac{1}{4\pi^2}\,\lambda\!\left([-\pi,\pi)\cap([-\pi,\pi)-w)\right),
\]
where $\lambda$ denotes Lebesgue measure. The intersection is empty if $|w|\ge 2\pi$, while if $|w|<2\pi$ it is an interval of length $2\pi-|w|$. Hence
\begin{equation}\label{eq:triangular}
f_D(w)=\frac{2\pi-|w|}{4\pi^2}\,\mathbf 1_{\{|w|<2\pi\}}.
\end{equation}

Now set $Y:=\wrap(D)\in[-\pi,\pi)$. For any Borel set $A\subset[-\pi,\pi)$, the event $\{Y\in A\}$ is the disjoint union over $m\in\mathbb Z$ of the events $\{D\in A+2\pi m\}$, so
\[
\mathbb P(Y\in A)=\sum_{m\in\mathbb Z}\mathbb P(D\in A+2\pi m)
=\sum_{m\in\mathbb Z}\int_{A+2\pi m} f_D(u)\,du.
\]
Changing variables $u=y+2\pi m$ in each integral gives
\[
\mathbb P(Y\in A)
=\int_A\left(\sum_{m\in\mathbb Z} f_D(y+2\pi m)\right)dy,
\]
so the density of $Y$ is
\begin{equation}\label{eq:wrapsum}
f_Y(y)=\sum_{m\in\mathbb Z} f_D(y+2\pi m),\qquad y\in[-\pi,\pi).
\end{equation}
Because $f_D$ is supported on $(-2\pi,2\pi)$, only the terms $m\in\{-1,0,1\}$ can contribute. If $y\in[0,\pi)$ then $f_D(y+2\pi)=0$ and, using \eqref{eq:triangular},
\[
f_Y(y)=f_D(y)+f_D(y-2\pi)
=\frac{2\pi-y}{4\pi^2}+\frac{2\pi-|y-2\pi|}{4\pi^2}
=\frac{2\pi-y}{4\pi^2}+\frac{y}{4\pi^2}
=\frac{1}{2\pi}.
\]
If $y\in[-\pi,0)$ then $f_D(y-2\pi)=0$ and similarly
\[
f_Y(y)=f_D(y)+f_D(y+2\pi)=\frac{1}{2\pi}.
\]
Therefore
\begin{equation}\label{eq:Yuniform}
Y\sim \mathrm{Unif}([-\pi,\pi)).
\end{equation}

We now identify the conditional law of $X$ and $Z$ given $Y$. Write
\[
\phi:=\theta_0(k),\qquad \psi:=\theta_0(k+1),\qquad U:=\theta_0(k-1),\qquad V:=\theta_0(k+2).
\]
Then
\[
X=\wrap(\phi-U),\qquad Y=\wrap(\psi-\phi),\qquad Z=\wrap(V-\psi).
\]
Because the initial angles are i.i.d., the variables $U$ and $V$ are independent $\mathrm{Unif}([-\pi,\pi))$ and are independent of $(\phi,\psi)$. Fix $(\phi,\psi)$. Since $U$ is uniform and independent of $\phi$, the difference $\phi-U$ is uniform modulo $2\pi$; applying $\wrap$ is exactly the projection modulo $2\pi$ onto $[-\pi,\pi)$, hence
\begin{equation}\label{eq:Xgivenphipsi}
X\mid(\phi,\psi)\sim \mathrm{Unif}([-\pi,\pi)).
\end{equation}
Similarly,
\begin{equation}\label{eq:Zgivenphipsi}
Z\mid(\phi,\psi)\sim \mathrm{Unif}([-\pi,\pi)).
\end{equation}
Moreover, given $(\phi,\psi)$ the random variables $X$ and $Z$ depend on $U$ and $V$ separately, so
\begin{equation}\label{eq:condindep}
X\ \perp\!\!\!\perp\ Z\mid(\phi,\psi).
\end{equation}
Since $Y$ is a measurable function of $(\phi,\psi)$, conditioning further on $Y=y$ preserves both the uniform conditional marginals and conditional independence:
\begin{equation}\label{eq:cond-on-Y}
X\mid Y=y\sim \mathrm{Unif}([-\pi,\pi)),\quad
Z\mid Y=y\sim \mathrm{Unif}([-\pi,\pi)),\quad
X\ \perp\!\!\!\perp\ Z\mid Y=y.
\end{equation}

Fix $y\in[-\pi,\pi)$. Using \eqref{eq:cond-on-Y}, we compute
\[
\mathbb P\!\left(\bigl|X+\tfrac12 y\bigr|\le\pi\,\middle|\,Y=y\right)
=
\mathbb P\!\left(-\pi-\tfrac12 y\le X\le \pi-\tfrac12 y\,\middle|\,Y=y\right).
\]
Since $X\mid Y=y$ is uniform on $[-\pi,\pi)$, this equals the normalized overlap length of the interval
$[-\pi-\tfrac12 y,\ \pi-\tfrac12 y]$ with $[-\pi,\pi)$. Because $|y|\le\pi$, the shift magnitude is at most $\pi/2$, and the overlap length is exactly $2\pi-\frac{|y|}{2}$, hence
\begin{equation}\label{eq:one-overlap}
\mathbb P\!\left(\bigl|X+\tfrac12 y\bigr|\le\pi\,\middle|\,Y=y\right)
=
1-\frac{|y|}{4\pi}.
\end{equation}
The same holds with $Z$ in place of $X$. By conditional independence in \eqref{eq:cond-on-Y},
\[
\mathbb P(\text{no crossing}\mid Y=y)
=
\left(1-\frac{|y|}{4\pi}\right)^2.
\]
Taking expectation with respect to $Y\sim\mathrm{Unif}([-\pi,\pi))$ from \eqref{eq:Yuniform},
\[
\mathbb P(\text{no crossing})
=
\frac{1}{2\pi}\int_{-\pi}^{\pi}\left(1-\frac{|y|}{4\pi}\right)^2\,dy
=
\frac{1}{\pi}\int_{0}^{\pi}\left(1-\frac{y}{4\pi}\right)^2\,dy.
\]
Expanding and integrating,
\[
\frac{1}{\pi}\int_{0}^{\pi}\left(1-\frac{y}{2\pi}+\frac{y^2}{16\pi^2}\right)dy
=
\frac{1}{\pi}\left(\pi-\frac{\pi}{4}+\frac{\pi}{48}\right)
=
\frac{37}{48}.
\]
By \eqref{eq:crossing-event}, $\mathbb P(\text{branch-crossing})=1-\mathbb P(\text{no crossing})=\frac{11}{48}$.
\end{proof}
%
%
%----------------------------------------
% APPENDIX B: PROOF OF PROPOSITION 4.15
%----------------------------------------
%
%
\section{Proof of Proposition~\ref{prop:s-uniform-bound-per-site}}
\label{sec:appB}
\begin{proof}
For an update on any edge $(k,k+1)$, one has
$s_{t+1}(k)+s_{t+1}(k+1)=s_t(k)+s_t(k+1)$ and all other coordinates are unchanged.
Hence $\sum_{i=1}^N s_t(i)$ is invariant in time, and since $s_0\equiv0$,
\begin{equation}\label{eq:sumzero}
\sum_{i=1}^N s_t(i)=0 \qquad \text{for all }t\ge0.
\end{equation}

Fix $t$ and suppose edge $(k,k+1)$ is updated.
Write $x=s_t(k)$ and $y=s_t(k+1)$, and $m=(x+y)/2$.
Then $s_{t+1}(k)=m+\beta/2$ and $s_{t+1}(k+1)=m-\beta/2$, so
\[
s_{t+1}(k)^2+s_{t+1}(k+1)^2
=
2m^2+\frac{\beta^2}{2},
\qquad
x^2+y^2
=
2m^2+\frac{(x-y)^2}{2}.
\]
Therefore
\begin{equation}\label{eq:s-incr-elementary}
\|s_{t+1}\|_2^2-\|s_t\|_2^2
=
-\frac12\big(s_t(k)-s_t(k+1)\big)^2+\frac{\beta^2}{2}.
\end{equation}

Conditionally on $\mathcal F_t$, the index $k$ is uniform in $\{1,\dots,N\}$, hence
\begin{equation}\label{eq:cond-drift}
\mathbb E\!\left[\|s_{t+1}\|_2^2 \mid \mathcal F_t\right]
=
\|s_t\|_2^2
-\frac{1}{2N}\sum_{i=1}^N\big(s_t(i)-s_t(i+1)\big)^2
+\frac{\beta^2}{2},
\end{equation}
where indices are understood modulo $N$.

Set $G_t:=\sum_{i=1}^N (s_t(i)-s_t(i+1))^2$ and
$\operatorname{diam}(s_t):=\max_i s_t(i)-\min_i s_t(i)$.
Along the ring,
\[
\operatorname{diam}(s_t)
\le \sum_{i=1}^N |s_t(i)-s_t(i+1)|
\le \sqrt{N}\,G_t^{1/2},
\]
where the second inequality is Cauchy--Schwarz.
On the other hand, since $\sum_i s_t(i)=0$ by \eqref{eq:sumzero},
there exist indices $i_+,i_-$ with $s_t(i_+)\ge0\ge s_t(i_-)$, hence
$\operatorname{diam}(s_t)\ge \max_i |s_t(i)|$ and therefore
\[
\|s_t\|_2^2 \le N\big(\max_i |s_t(i)|\big)^2 \le N\,\operatorname{diam}(s_t)^2.
\]
Combining the two relations yields
\[
\|s_t\|_2^2 \le N\big(\sqrt{N}\,G_t^{1/2}\big)^2 = N^2\,G_t,
\qquad\text{so}\qquad
G_t \ge \frac{1}{N^2}\,\|s_t\|_2^2.
\]
Substituting this into \eqref{eq:cond-drift} gives
\begin{equation}
\label{e:b_comp}
\mathbb E\!\left[\|s_{t+1}\|_2^2 \mid \mathcal F_t\right]
\le
\Big(1-\frac{1}{2N^3}\Big)\|s_t\|_2^2+\frac{\beta^2}{2}.
\end{equation}
Iterating \eqref{e:b_comp} and using $s_0\equiv0$ gives
\[
\mathbb E\|s_t\|_2^2
\le
\frac{\beta^2/2}{1/(2N^3)}
=
\beta^2\,N^3\leq CN.
\]

\end{proof}

%
%
% REFERENCES
%
%

\bibliographystyle{abbrvnat}
\bibliography{main}
\end{document}